\newtheorem{prop}[subsection]{Proposition}
\newtheorem{conj}[subsection]{Conjecture}
\newtheorem{teor}[subsection]{Theorem}
\newtheorem{lema}[subsection]{Lemma}
\newtheorem{cor} [subsection]{Corollary}
\theoremstyle{definition}
\theoremstyle{remark}
\newtheorem{obs} [subsection]{Remark}
\newtheorem{exm} [subsection]{Example}
\newcommand{\me}{\mathfrak m}
\def\qdepth{\operatorname{hdepth}}
\def\hdepth{\operatorname{hdepth}}
\def\depth{\operatorname{depth}}
\def\deg{\operatorname{deg}}
\numberwithin{equation}{section}
\begin{document}

\title[Comparing Hilbert depth of $I$ with Hilbert depth of $S/I$]{Comparing Hilbert depth of $I$ with Hilbert depth of $S/I$}
\author[Andreea I.\ Bordianu, Mircea Cimpoea\c s 
       ]
  {Andreea I.\ Bordianu$^1$ and Mircea Cimpoea\c s$^2$
	}
\date{}

\keywords{Depth, Hilbert depth, monomial ideal, squarefree monomial ideal}

\subjclass[2020]{05A18, 06A07, 13C15, 13P10, 13F20}

\footnotetext[1]{ \emph{Andreea I.\ Bordianu}, University Politehnica of Bucharest, Faculty of
Applied Sciences, 
Bucharest, 060042, E-mail: andreea.bordianu@stud.fsa.upb.ro}
\footnotetext[2]{ \emph{Mircea Cimpoea\c s}, University Politehnica of Bucharest, Faculty of
Applied Sciences, 
Bucharest, 060042, Romania and Simion Stoilow Institute of Mathematics, Research unit 5, P.O.Box 1-764,
Bucharest 014700, Romania, E-mail: mircea.cimpoeas@upb.ro,\;mircea.cimpoeas@imar.ro}

\begin{abstract}
Let $I$ be a monomial ideal of $S=K[x_1,\ldots,x_n]$.
We show that the following are equivalent: (i) $I$ is principal, (ii) $\qdepth(I)=n$, (iii) $\qdepth(S/I)=n-1$.

Assuming that $I$ is squarefree, we prove that if $\qdepth(S/I)\leq 3$ or $n\leq 5$ then $\qdepth(I)\geq \qdepth(S/I)+1$.
Also, we prove that if $\qdepth(S/I)\leq 5$ or $n\leq 7$ then $\qdepth(I)\geq \qdepth(S/I)$.
\end{abstract}

\maketitle
\section{Introduction}

Let $K$ be a field and $S=K[x_1,\ldots,x_n]$ be the polynomial ring over $K$.
Given a finitely generated graded $S$-module $M$, the \emph{Hilbert depth} of $M$, denoted by $\hdepth(M)$, is the 
maximal depth of a finitely generated graded $S$-module $N$ with the same Hilbert series as $M$; see \cite{bruns} and 
\cite{uli} for further details. 

One would expect that it is easy to compute the Hilbert depth of a module, once its Hilbert function is known. 
But it turns out that even for the powers of the maximal ideal, the computation of the Hilbert depth leads to difficult
numerical computations; see \cite{maxim}. Another argument for studying this invariant is the fact that the Hilbert depth
of a finitely generated $\mathbb Z^n$-graded $S$-module $M$ is an upper bound for the Stanley depth of $M$; for further
details on this topic we refer the reader to \cite{herzog}.
 
Let $0\subset I\subsetneq J\subset S$ be two squarefree monomial ideals.
In \cite{lucrare2}, the authors presented a new method for computing the Hilbert depth of $J/I$, as follows:
We consider the poset
$$P_{J/I}=\{A\subset [n]\;:\;x_A=\prod_{j\in A}x_j \in J\setminus I\} \subset 2^{[n]}.$$ 
We let $$\alpha_j(J/I)=|\{A\in P_{J/I}\;:\;|A|=j\}|,\text{ for }0\leq j\leq n.$$
For all $0\leq q\leq n$ and $0\leq k\leq q$, we consider the integers
\begin{equation}\label{betak}
  \beta_k^q(J/I):=\sum_{j=0}^k (-1)^{k-j} \binom{q-j}{k-j} \alpha_j(J/I).
\end{equation}
Note that, using an inverse formula, from \eqref{betak} we deduce that
\begin{equation}\label{alfak}
  \alpha_k(J/I)=\sum_{j=0}^k \binom{q-j}{k-j} \beta^q_j(J/I),\text{ for all }0\leq k\leq q\leq n.
\end{equation}
With the above notations, we have the following result:

\begin{teor}(\cite[Theorem 2.4]{lucrare2})\label{d1}
The Hilbert depth of $J/I$ is:
$$\qdepth(J/I)=\max\{q\;:\;\beta_k^q(J/I) \geq 0\text{ for all }0\leq k\leq q\}.$$
\end{teor}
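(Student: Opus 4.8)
The plan is to reduce the computation of $\hdepth(J/I)$ to a single non-negativity statement about the coefficients of one power series, and then to identify those coefficients with the $\beta_k^q(J/I)$ combinatorially. First I would write down the (total-degree) Hilbert series of $J/I$ in terms of the $\alpha_j$. Since $I$ and $J$ are squarefree, a monomial $x^a$ lies in $J\setminus I$ if and only if its support $\supp(x^a)=\{i:a_i>0\}$ belongs to $P_{J/I}$, because a squarefree generator divides $x^a$ exactly when it divides $x_{\supp(x^a)}$. Grouping the monomial $K$-basis of $J/I$ by support, each $A\in P_{J/I}$ contributes $\prod_{i\in A}\frac{t}{1-t}$, so that
\[
H_{J/I}(t)=\sum_{A\in P_{J/I}}\left(\frac{t}{1-t}\right)^{|A|}=\sum_{j=0}^{n}\alpha_j(J/I)\left(\frac{t}{1-t}\right)^{j}.
\]

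Next I would invoke the characterization of Hilbert depth coming from \cite{uli} and \cite{bruns}: for a finitely generated graded module $M$ one has $\hdepth(M)\geq q$ if and only if the formal power series $(1-t)^q H_M(t)$ has only non-negative coefficients. The necessity is the easy half: if $\depth M\geq q$ then, possibly after extending $K$ to an infinite field (which changes neither the Hilbert series nor the depth), there is an $M$-regular sequence of $q$ linear forms; quotienting by it multiplies the Hilbert series by $(1-t)^q$ and leaves the Hilbert series of a genuine module, whose coefficients are non-negative. The converse, namely realizing a module of depth $\geq q$ with the prescribed Hilbert series once those coefficients are non-negative, is the substantive external input that I would cite rather than reprove.

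Finally I would expand
\[
(1-t)^q H_{J/I}(t)=\sum_{j=0}^{n}\alpha_j(J/I)\,t^{j}(1-t)^{q-j}
\]
and extract the coefficient of $t^k$. For $0\leq k\leq q$ only the indices $j\leq k$ survive and one recovers exactly $\beta_k^q(J/I)$ as defined in \eqref{betak}. The key observation is what happens for $k>q$: the terms with $j\leq q$ are polynomials of degree $\leq q$ and contribute nothing, while each term with $j>q$ equals $\alpha_j\,t^{j}(1-t)^{-(j-q)}$ and hence contributes only the non-negative amount $\alpha_j\binom{k-q-1}{k-j}\geq 0$. Thus all coefficients of $(1-t)^qH_{J/I}(t)$ in degrees exceeding $q$ are automatically non-negative, so the whole series is non-negative if and only if $\beta_k^q(J/I)\geq 0$ for every $0\leq k\leq q$. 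Combining this with the characterization of the previous paragraph and taking the largest admissible $q$ yields the asserted formula. The part I expect to be the main obstacle is the cited realizability direction of the Hilbert depth characterization; the genuinely new and clean point, by contrast, is the reduction showing that only the degrees $0\leq k\leq q$ can ever produce a negative coefficient.
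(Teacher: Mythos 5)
Your proof is correct and takes essentially the same route as the source this paper relies on: the paper does not reprove Theorem \ref{d1} (it quotes \cite[Theorem 2.4]{lucrare2}), and the underlying argument is exactly yours --- write $H_{J/I}(t)=\sum_j\alpha_j(J/I)\,t^j/(1-t)^j$ by grouping monomials according to their support, invoke the Uliczka-type characterization $\hdepth(M)=\max\{q\;:\;(1-t)^qH_M(t)\text{ has non-negative coefficients}\}$ (the realizability half being the cited external input), identify the coefficient of $t^k$ for $k\le q$ with $\beta_k^q(J/I)$, and note that the coefficients in degrees $k>q$ are automatically non-negative. Your observation that non-negativity of $(1-t)^qH_{J/I}(t)$ forces non-negativity of $(1-t)^{q'}H_{J/I}(t)$ for all $q'\le q$ is also precisely what justifies the paper's Corollary \ref{max}.
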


From the proof of the above theorem, we note the fact:

\begin{cor}\label{max}
If $\hdepth(J/I)=q$, then $\beta_k^{q'}(J/I) \geq 0$ for all $0\leq k\leq q'\leq q$.
\end{cor}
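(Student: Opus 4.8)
The plan is to reduce everything to a single identity linking the $\beta$-numbers at consecutive values of $q$, and then run a short descending induction. First I would record the base case: by Theorem \ref{d1}, the hypothesis $\hdepth(J/I)=q$ (being the maximum of the relevant set) forces $\beta_k^q(J/I)\geq 0$ for every $0\leq k\leq q$. The substance of the corollary is then to show that nonnegativity at level $q$ automatically propagates to all smaller levels $q'$, and for this I need to understand how the $\beta_k^{q-1}$ are assembled from the $\beta_i^{q}$.

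The key step is the identity
$$\beta_k^{q-1}(J/I)=\sum_{i=0}^{k}\beta_i^{q}(J/I)\qquad(0\leq k\leq q-1).$$
I would prove this directly from the definition \eqref{betak}: substituting and interchanging the order of summation gives
$$\sum_{i=0}^{k}\beta_i^{q}(J/I)=\sum_{j=0}^{k}\alpha_j(J/I)\sum_{m=0}^{k-j}(-1)^{m}\binom{q-j}{m},$$
and the inner sum collapses by the classical partial-sum identity $\sum_{m=0}^{M}(-1)^{m}\binom{N}{m}=(-1)^{M}\binom{N-1}{M}$ (with $N=q-j$ and $M=k-j$) to exactly $(-1)^{k-j}\binom{(q-1)-j}{k-j}$, which is the coefficient of $\alpha_j(J/I)$ in $\beta_k^{q-1}(J/I)$. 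Equivalently, Pascal's rule $\binom{q-j}{k-j}=\binom{q-1-j}{k-j}+\binom{q-1-j}{k-1-j}$ yields the one-step recurrence $\beta_k^{q}=\beta_k^{q-1}-\beta_{k-1}^{q-1}$ (the $j=k$ term drops from the second sum since $\binom{q-1-k}{-1}=0$), and telescoping this recurrence over $k$ reproduces the displayed summation formula.

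With the identity in hand the corollary follows by descending induction on $q'$. The base case $q'=q$ is the observation above. Assuming $\beta_i^{q'}(J/I)\geq 0$ for all $0\leq i\leq q'$, the identity gives, for every $0\leq k\leq q'-1$,
$$\beta_k^{q'-1}(J/I)=\sum_{i=0}^{k}\beta_i^{q'}(J/I)\geq 0,$$
since each summand has index $i\leq k\leq q'-1\leq q'$ and is nonnegative by the inductive hypothesis. Iterating down to $q'$ gives $\beta_k^{q'}(J/I)\geq 0$ for all $0\leq k\leq q'\leq q$, as claimed.

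I expect the only delicate point to be pinning down the telescoping identity cleanly, namely the binomial manipulation and the careful bookkeeping of the summation ranges (in particular the vanishing of the boundary terms $\beta_{-1}$ and $\binom{q-1-k}{-1}$). Once that identity is established, the inductive step is immediate, which is presumably why the authors present this as a direct consequence ``noted from the proof'' rather than as a separate theorem.
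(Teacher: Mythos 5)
Your proof is correct, and it takes a genuinely different route from the paper. The paper offers no self-contained argument at all: the corollary is presented as something ``noted from the proof'' of Theorem \ref{d1}, i.e.\ it leans on the internals of \cite[Theorem 2.4]{lucrare2}, where the proof shows that $\hdepth(J/I)\geq q'$ is \emph{equivalent} to $\beta_k^{q'}(J/I)\geq 0$ for all $0\leq k\leq q'$ (via the existence of suitable poset partitions / Hilbert decompositions), so that $\hdepth(J/I)=q\geq q'$ immediately yields the nonnegativity at level $q'$. You instead make the downward-closedness of the nonnegativity condition a purely arithmetic fact, via the telescoping identity
\begin{equation*}
\beta_k^{q-1}(J/I)=\sum_{i=0}^{k}\beta_i^{q}(J/I)\qquad(0\leq k\leq q-1),
\end{equation*}
which I verified: Pascal's rule gives $\beta_k^{q}=\beta_k^{q-1}-\beta_{k-1}^{q-1}$ (the $j=k$ boundary term vanishing as you note), and telescoping, or equivalently your partial alternating-sum computation with $\sum_{m=0}^{M}(-1)^m\binom{N}{m}=(-1)^M\binom{N-1}{M}$, yields the display; here $N=q-j\geq 1$ throughout since $k\leq q-1$, so no convention issues with negative upper entries arise. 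The descending induction is then immediate. What each approach buys: the paper's route is a one-line observation \emph{given} the machinery of \cite{lucrare2}, but it is opaque to a reader who has only the statement of Theorem \ref{d1} (the maximum of a set that is not a priori an interval); your argument is self-contained, uses only the definition \eqref{betak}, and in fact proves slightly more, namely the explicit recurrence relating consecutive levels, which could be reused elsewhere (e.g.\ it gives $\beta_k^{q'}$ as an iterated partial sum of the $\beta_i^{q}$, making monotonicity phenomena transparent).
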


If $I\subset J\subset S$ are two monomial ideals, then we consider their polarizations $I^p\subset J^p\subset R$,
where $R$ is a new ring of polynomials obtained from $S$ by adding $N$ new variables. The following proposition
shows that we can reduce the study of the Hilbert depth of a quotient of monomial ideals to the
squarefree case:

\begin{prop}(\cite[Proposition 2.6]{lucrare2})\label{d2}
The Hilbert depth of $J/I$ is
$$\qdepth(J/I)=\qdepth(J^p/I^p)-N.$$
\end{prop}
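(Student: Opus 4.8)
The plan is to reduce the statement to a Hilbert series identity together with a general principle comparing Hilbert depth over $R$ with Hilbert depth over $S$. Recall that polarization introduces $N$ new variables, so $R$ has $n+N$ of them, and that the differences of polarization variables form a sequence $\theta=(\theta_1,\dots,\theta_N)$ of $N$ linear forms which is regular and satisfies $R/(I^p,\theta)\cong S/I$ and $R/(J^p,\theta)\cong S/J$.

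First I would record the Hilbert series identity
$$H_{J/I}(t)=(1-t)^N\,H_{J^p/I^p}(t).$$
To obtain it, apply $-\otimes_R R/(\theta)$ to the short exact sequence $0\to J^p/I^p\to R/I^p\to R/J^p\to 0$. Since $\theta$ is regular on $R/I^p$, multiplication by each $\theta_i$ stays injective on the submodule $J^p/I^p$, and since the higher Koszul homology of $R/J^p$ against $\theta$ vanishes, the sequence remains exact after the quotient; this identifies $(J^p/I^p)/\theta(J^p/I^p)$ with $\ker(S/I\to S/J)=J/I$. As $\theta$ is a regular sequence of $N$ linear forms, passing to this quotient multiplies the Hilbert series by $(1-t)^N$, giving the identity.

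Next I would isolate the general lemma: if an $S$-module $M$ and an $R$-module $M'$ satisfy $H_{M'}(t)=H_M(t)/(1-t)^N$, then $\qdepth_R(M')=\qdepth_S(M)+N$. For "$\geq$", take an $S$-module $N_0$ with $H_{N_0}=H_M$ and $\depth N_0=\qdepth_S(M)$, and let $T=K[\text{new variables}]$ so that $R\cong S\otimes_K T$; then $N_0\otimes_K T$ is an $R$-module of depth $\depth N_0+N$ with Hilbert series $H_M(t)/(1-t)^N=H_{M'}(t)$, so $\qdepth_R(M')\geq \qdepth_S(M)+N$. For "$\leq$", take an $R$-module $N'$ with $H_{N'}=H_{M'}$ realizing $q':=\qdepth_R(M')$; the previous inequality already forces $q'\geq N$, hence $\depth N'\geq N$. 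After harmlessly extending $K$ to an infinite field (Hilbert depth depends only on the Hilbert series, which is unchanged), I would pick $N$ generic linear forms $\ell_1,\dots,\ell_N$ forming a regular sequence on $N'$ with $R/(\underline\ell)\cong S$; then $N'/(\underline\ell)N'$ is an $S$-module of depth $q'-N$ with Hilbert series $(1-t)^N H_{N'}(t)=H_M(t)$, whence $\qdepth_S(M)\geq q'-N$. Applying the lemma to $M=J/I$ and $M'=J^p/I^p$ yields the proposition.

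The hard part will be the "$\leq$" direction. The delicate points there are guaranteeing a length-$N$ regular sequence of linear forms on $N'$ (which is why one first establishes $\depth N'\geq N$ and passes to an infinite base field so that prime avoidance produces generic regular linear forms), and verifying that $R/(\underline\ell)$ is a standard graded polynomial ring isomorphic to $S$, so that the resulting quotient is genuinely an $S$-module with the correct Hilbert series. Underlying all of this is the fact that enlarging $K$ alters neither $\qdepth_R(M')$ nor $\qdepth_S(M)$, i.e. that $\qdepth$ is an invariant of the Hilbert series alone; this is what legitimizes the field-genericity step and should be invoked explicitly.
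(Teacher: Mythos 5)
The paper gives no proof of Proposition \ref{d2} --- it is quoted from \cite{lucrare2} --- and the argument there is essentially immediate from the Hilbert-series characterization of Hilbert depth (see \cite{uli}, \cite{bruns}): $\hdepth(M)$ equals the largest $r$ such that $(1-t)^r H_M(t)$ has nonnegative coefficients, so the polarization identity $H_{J^p/I^p}(t)=H_{J/I}(t)/(1-t)^N$ shifts this maximum by exactly $N$, proving both inequalities at once. Your proposal is correct but takes a genuinely longer, module-theoretic route. Your Tor/Koszul derivation of the Hilbert series identity from $0\to J^p/I^p\to R/I^p\to R/J^p\to 0$ is sound (it is the standard proof of that identity, though note that regularity of $\theta$ on the submodule $J^p/I^p$ requires re-identifying the successive quotients as submodules of $R/(I^p,\theta_1,\ldots,\theta_i)$ at each stage, which your Tor computation implicitly supplies), and the ``$\geq$'' direction via $N_0\otimes_K T$ is fine. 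The real divergence is your ``$\leq$'' direction: realizing $q'=\hdepth_R(J^p/I^p)$ by a module $N'$, extending the field, and cutting by $N$ generic linear forms. This works, but the field-extension step you flag is not a removable technicality: $\hdepth$ over $K'$ dominates $\hdepth$ over $K$ by flat base change, whereas the reverse inequality --- trading a deep module over $K'$ for one over $K$ --- has no elementary proof and is precisely the content of Uliczka's positivity theorem. So an honest completion of your argument must invoke that theorem anyway, and once it is invoked the entire lemma follows in one line from $(1-t)^{q+N}H_{J^p/I^p}(t)=(1-t)^{q}H_{J/I}(t)$, rendering the generic-hyperplane apparatus redundant. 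What your route buys is an explicit construction of the depth-realizing modules; what the positivity route used in \cite{lucrare2} buys is brevity and field-independence for free.
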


Note that, if $I\subset J\subset S$ are two monomial ideals, then 
\begin{equation}\label{t11}
\depth(J/I)\leq \qdepth(J/I) \leq \dim(J/I).
\end{equation}

The aim of our paper is to continue the study of the Hilbert depth of monomial ideals, using the above method.
In Theorem \ref{teo1} we prove that the following are equivalent for a monomial ideal $I\subset S$: 
$$ (i)\; I\text{ is principal}, \;(ii)\; \qdepth(I)=n\text{ and }(iii)\;\qdepth(S/I)=n-1.$$
Note that $\qdepth(I)=\qdepth(S/I)+1$ if $I$ is principal. More generally, if $S/I$ is Cohen-Macaulay,
then, according to \cite[Theorem 2.8]{lucrare3}, we have $$\qdepth(I)\geq \qdepth(S/I)+1.$$
It is natural to ask if this equality remains true in the non Cohen-Macaulay case. 
In general, the answer is no. However, we will see that in some special cases, we can still compare
$\qdepth(S/I)$ with $\qdepth(I)$. In order to do that, we rely heavily on the famous Kruskal-Katona Theorem; 
see Theorem \ref{krusk}.

In Theorem \ref{teo2} we prove that if $I\subset S$ is squarefree with $\qdepth(S/I)\leq 3$, then 
$$\qdepth(I)\geq \qdepth(S/I)+1.$$
We also show that this inequality holds if $I\subset K[x_1,\ldots,x_5]$ is squarefree; see Corollary \ref{cor2}. 
The above results are sharp, in the sense that there we provide an example of a squarefree ideal 
$I\subset K[x_1,\ldots,x_6]$ with $\qdepth(S/I)=\qdepth(I)=4$; see Example \ref{patru}.

In Theorem \ref{t3} we prove that if $I\subset S$ is squarefree with $\qdepth(S/I)=4$, then $\qdepth(I)\geq 4$.
Also, in Corollary \ref{cor3} we show that if $I\subset K[x_1,\ldots,x_6]$ is squarefree, then
$\qdepth(I)\geq \qdepth(S/I)$.

In Example \ref{minus}, we provide a squarefree monomial ideal $I\subset K[x_1,\ldots,x_{13}]$ with 
$\qdepth(I)=7<\qdepth(S/I)=8$. In Example \ref{minus2}, we provide a squarefree monomial ideal $I\subset K[x_1,\ldots,x_{14}]$ with
$\qdepth(I)=6<\qdepth(S/I)=7$. In Example \ref{minus3}, we provide a squarefree monomial ideal $I\subset K[x_1,\ldots,x_{10}]$ with
$\qdepth(I)=6<\qdepth(S/I)=7$.

This yields us to conjecture that $\qdepth(I)\geq \qdepth(S/I)$ if $\qdepth(S/I)\leq 6$ or $n\leq 9$; see Conjecture \ref{conj1} and Conjecture \ref{conj2}.
In the last section we tackle the case $\qdepth(S/I)=5$ and we show that if $\qdepth(S/I)=5$ or $n=7$, then $\qdepth(I)\geq \qdepth(S/I)$;
see Theorem \ref{main} and Corollary \ref{cmain}.

\section{Preliminary results}

First, we prove the following combinatorial identity:

\begin{lema}\label{magic}
For all $0\leq k\leq q\leq n$ we have that
$$\sum_{j=0}^k (-1)^{k-j}\binom{q-j}{k-j}\binom{n}{j} = \binom{n-q+k-1}{k}.$$
\end{lema}

\begin{proof}
By the Chu-Vandermonde summation, we have
$$\sum_{j=0}^k (-1)^{k-j}\binom{q-j}{k-j}\binom{n}{j} = \sum_{j=0}^k \binom{-q+k-1}{k-j}\binom{n}{j} =\binom{n-q+k-1}{k},$$
as required.
\end{proof}


\begin{lema}\label{liema}
Let $I\subset S$ be a squarefree monomial ideal. The following are equivalent:
\begin{enumerate}
\item[(1)] $I=(u)$, where $u\in S$ is a squarefree monomial of degree $m$ with $1\leq m\leq n$.
\item[(2)] $\alpha_k(I)=\binom{n-m}{k-m}$ for all $0\leq k\leq n$.
\item[(3)] $\alpha_k(S/I)=\binom{n}{k}-\binom{n-m}{k-m}$ for all $0\leq k\leq n$.
\item[(4)] $\beta_k^n(I)=\delta_{km}=\begin{cases} 1,& k=m \\0,& k\neq m \end{cases}$.
\item[(5)] $\beta_k^{n-1}(S/I)=\begin{cases} 1,& 0\leq k\leq m-1 \\ 0,& m\leq k\leq n \end{cases}$.
\end{enumerate}
\end{lema}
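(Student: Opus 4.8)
The plan is to prove $(1)\Rightarrow(2)\Rightarrow(1)$ directly, and then to obtain the remaining equivalences from the fact that the two linear transforms \eqref{betak} and \eqref{alfak} are mutually inverse for a fixed $q$, combined with the identity \eqref{magic}. Since \eqref{alfak} recovers the $\alpha$'s from the $\beta^q$'s and conversely, each $\alpha$-sequence corresponds to a unique $\beta^q$-sequence, so for the pairs $(2)\Leftrightarrow(4)$ and $(3)\Leftrightarrow(5)$ it will suffice to verify a single direction of each.

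First I would prove $(1)\Rightarrow(2)$. Writing $u=x_F$ with $F\subset[n]$ and $|F|=m$, a squarefree monomial $x_A$ lies in $I=(u)$ exactly when $x_F\mid x_A$, i.e.\ $F\subset A$; hence the subsets $A$ with $|A|=k$ that contribute to $\alpha_k(I)$ are precisely those containing $F$, and there are $\binom{n-m}{k-m}$ of them. For the converse $(2)\Rightarrow(1)$, from $\alpha_{m-1}(I)=\binom{n-m}{-1}=0$ and $\alpha_m(I)=\binom{n-m}{0}=1$ I read off that $I$ contains no monomial of degree $<m$ and exactly one squarefree monomial $u=x_F$ of degree $m$, so $(u)\subset I$. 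Since the squarefree monomials of degree $k$ already lying in $(u)$ number $\binom{n-m}{k-m}=\alpha_k(I)$, the ideals $I$ and $(u)$ contain the same squarefree monomials; as a squarefree monomial ideal is determined by the set of squarefree monomials it contains, this gives $I=(u)$.

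The equivalence $(2)\Leftrightarrow(3)$ is immediate from $\alpha_k(I)+\alpha_k(S/I)=\binom{n}{k}$, the total number of squarefree monomials of degree $k$. For $(2)\Rightarrow(4)$ I substitute $\alpha_j(I)=\binom{n-m}{j-m}$ into $\beta_k^n(I)$, shift the summation index by $m$, and apply \eqref{magic} with ambient rank $n-m$; this produces $\binom{k-m-1}{k-m}=\delta_{km}$. For $(3)\Rightarrow(5)$ the computation splits $\alpha_j(S/I)=\binom{n}{j}-\binom{n-m}{j-m}$ into two sums: the first equals $\binom{k}{k}=1$ by \eqref{magic} with $q=n-1$, while the second, after the index shift, equals $\binom{k}{k}=1$ for $m\le k\le n-1$ and vanishes for $k<m$. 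The difference is therefore $1$ on $0\le k\le m-1$ and $0$ on $m\le k\le n-1$, which is exactly $(5)$.

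The main obstacle is this last $(3)\Leftrightarrow(5)$ computation: one must invoke \eqref{magic} twice with different parameters and track the admissible ranges of $k$ carefully, noting in particular that $\beta_k^{n-1}(S/I)$ is only defined for $0\le k\le n-1$, so the upper branch of $(5)$ should be read as $m\le k\le n-1$. Once the two evaluations of \eqref{magic} and their index ranges are checked, all five statements are linked through $(2)$, closing the chain of equivalences.
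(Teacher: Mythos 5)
Your proposal is correct and takes essentially the same route as the paper: the same counting argument for $(1)\Leftrightarrow(2)$, complementation for $(2)\Leftrightarrow(3)$, and the same index-shift applications of \eqref{magic} for $(2)\Rightarrow(4)$ and $(3)\Rightarrow(5)$, with your appeal to the invertibility of the transform for the converse directions being just a repackaging of the paper's explicit use of the inverse formula \eqref{alfak}. You also rightly observe that the upper branch of $(5)$ should be read as $m\le k\le n-1$ since $\beta_k^{n-1}$ is only defined for $k\le n-1$, a range the paper's own proof states sloppily.
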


\begin{proof}
$(1)\Rightarrow (2)$. We can assume that $u=x_{n-m+1}\cdots x_n$. It follows that a squarefree monomial $v$ of degree $k$ 
belongs to $I$ if and only if $v=u\cdot w$, where $w\in K[x_1,\ldots,x_{n-m}]$ is squarefree with $\deg(w)=k-m$. Thus, we are done.

$(2)\Rightarrow (1)$. Since $\alpha_m(I)=1$, it follows that there exists a squarefree monomial $u\in I$, of degree $m$.
It follows that $L:=(u)\subset I$. From $(1)\Rightarrow (2)$, it follows that $\alpha_k(L)=\alpha_k(I)$ for all $0\leq k\leq n$.
Hence, $I=(u)$, as required.

$(2)\Leftrightarrow (3)$. It follows from the obviously fact: $\alpha_k(I)=\binom{n}{k}-\alpha_k(S/I)$.

$(2)\Rightarrow (4)$. Since $\alpha_k(I)=0$ for $0\leq k\leq m-1$, it follows that $\beta_k^n(I)=0$ for $0\leq k\leq m-1$.
Also, $\beta_m^n(I)=\alpha_m(I)=\binom{n-m}{0}=1$. Now, assume that $k>m$. From Lemma \ref{magic}, using the substitution $\ell=j-m$, 
we deduce that
\begin{align*}
 \beta_k^n(I)&=\sum_{j=m}^k (-1)^{k-j} \binom{n-j}{k-j}\binom{n-m}{j-m} = \sum_{\ell=0}^{k-m} (-1)^{k-m-\ell} \binom{(n-m)-\ell}{(k-m)-\ell}
\binom{n-m}{\ell} \\
& = \binom{(n-m)-(n-m)+k-m-1}{k-m}=\binom{k-m-1}{k-m}=0.
\end{align*}

$(4)\Rightarrow (2)$. Since $\alpha_k(I)=\sum_{j=0}^k \binom{n-j}{k-j}\beta_j^n(I)$ for all $0\leq k\leq n$, by (4) it
follows that $\alpha_k(I)=0$ for $k<m$ and $\alpha_k(I)=\binom{n-m}{k-m}$ for $k\geq m$. Note that $\binom{n-m}{k-m}=0$ for
$k<m$.

$(3)\Rightarrow (5)$. For any $0\leq k\leq n-1$, we have that \small
\begin{equation}\label{coocoo}
\beta_k^{n-1}(S/I)=\sum_{j=0}^k (-1)^{k-j} \binom{n-1-j}{k-j}\binom{n}{j} - \sum_{j=0}^k (-1)^{k-j} \binom{n-1-j}{k-j}\binom{n-m}{j-m}
\end{equation} \normalsize
Using the substitution $\ell=j-m$ we deduce that \small
\begin{equation}\label{coocoo_2}
\sum_{j=0}^k (-1)^{k-j} \binom{n-1-j}{k-j}\binom{n-m}{j-m}=\sum_{\ell=0}^{k-m} (-1)^{(k-m)-\ell} \binom{n-m-1-\ell}{k-m-\ell}\binom{n-m}{\ell}
\end{equation} \normalsize
Using Lemma \ref{magic}, from \eqref{coocoo} and \eqref{coocoo_2} we deduce that
\begin{align*}
 \beta_k^{n-1}(S/I)&=\binom{n-(n-1)+k-1}{k}-\binom{n-m-(n-m-1)+k-m-1}{k-m}= \\
& = \binom{k}{k}-\binom{k-m}{k-m}=\begin{cases} 1,& 0\leq k\leq m-1 \\
0,& m\leq k \leq n-1 \end{cases},
\end{align*}
as required.

$(5)\Rightarrow (3)$. If $0\leq k\leq m-1$, then 
$$\alpha_k(S/I)=\sum_{j=0}^k \binom{n-1-j}{k-j}\beta_j^{n-1}(S/I) = \sum_{j=0}^k \binom{n-1-j}{k-j} = \binom{n}{k}.$$
The last equality follows by induction on $k$. Indeed, if $k=0$, then there is nothing
to prove. Assuming, $k\geq 1$, by induction hypothesis we have:
\begin{align*}
 \sum_{j=0}^k \binom{n-1-j}{k-j} & =\binom{n-1}{k} + \sum_{j=1}^{k} \binom{n-1-j}{k-j} = 
\binom{n-1}{k} + \sum_{j=0}^{k-1} \binom{n-2-j}{k-1-j} \\
& = \binom{n-1}{k}+\binom{n-1}{k-1}=\binom{n}{k},
\end{align*}
as required. On the other hand, if $m\leq k\leq n$, then
$$\alpha_k(S/I)=\sum_{j=0}^{m-1} \binom{n-1-j}{k-j} = \binom{n}{k}-\binom{n-m}{k-m}.$$
Since $\binom{n-m}{k-m}=0$ for $k<m$, we are done.
\end{proof}

\begin{teor}\label{teo1}
Let $(0)\neq I\subset S$ be a proper monomial ideal. The following are equivalent:
\begin{enumerate}
\item[(1)] $I$ is principal.
\item[(2)] $\qdepth(I)=n$.
\item[(3)] $\qdepth(S/I)=n-1$.
\end{enumerate}
\end{teor}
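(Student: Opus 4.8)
The plan is to dispatch the easy implications $(1)\Rightarrow(2)$ and $(1)\Rightarrow(3)$ by pure module theory, and to reduce the two converses to the squarefree case by polarization. We may assume $I\neq 0$, so that principal means $I=(u)$ for a nonzero nonunit monomial $u$. Then $I\cong S(-\deg u)$ is free, whence $\depth(I)=\dim(I)=n$, while $S/I$ is a hypersurface ring, Cohen--Macaulay of dimension $n-1$, so $\depth(S/I)=\dim(S/I)=n-1$; Theorem \ref{t11} then forces $\qdepth(I)=n$ and $\qdepth(S/I)=n-1$. For the converses I would pass to the polarizations $I^p\subset R=K[x_1,\dots,x_{n+N}]$. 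By Proposition \ref{d2}, $\qdepth(I)=\qdepth(I^p)-N$ and $\qdepth(S/I)=\qdepth(R/I^p)-N$, so the hypotheses become $\qdepth(I^p)=n+N$ and $\qdepth(R/I^p)=(n+N)-1$, i.e. the extremal values for the $(n+N)$-variable ring $R$. Since polarization preserves the minimal number of generators (indeed the graded Betti numbers), $I^p$ is principal if and only if $I$ is; hence it suffices to prove $(2)\Rightarrow(1)$ and $(3)\Rightarrow(1)$ for squarefree ideals.

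For squarefree $(2)\Rightarrow(1)$ the argument is short. Since $I\neq 0$, the top monomial $x_1\cdots x_n$ lies in $I$, so $\alpha_n(I)=1$; taking $k=q=n$ in \eqref{alfak} gives $\sum_{j=0}^n\beta_j^n(I)=\alpha_n(I)=1$. The hypothesis $\qdepth(I)=n$ means, by Theorem \ref{d1}, that every $\beta_j^n(I)\ge 0$. A family of nonnegative integers summing to $1$ has exactly one term equal to $1$ and the rest $0$, so $\beta_k^n(I)=\delta_{km}$ for some $m$, and Lemma \ref{liema}, $(4)\Rightarrow(1)$, yields $I=(u)$.

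The heart of the matter is squarefree $(3)\Rightarrow(1)$, which I would prove by induction on $n$. The hypothesis forces $\dim(S/I)=n-1$, so some squarefree monomial of degree $n-1$ lies outside $I$; after relabeling assume $x_1\cdots x_{n-1}\notin I$, hence $x_A\notin I$ for every $A\subseteq[n-1]$ by downward closure of the complement. If $x_n\in I$ then no monomial of $I$ is supported in $[n-1]$, so $I=(x_n)$. Otherwise $x_n\notin I$; writing $I_\Gamma\subset S'=K[x_1,\dots,x_{n-1}]$ for the squarefree ideal determined by $x_Bx_n\in I\Leftrightarrow x_B\in I_\Gamma$ (the Stanley--Reisner ideal of the link of $x_n$), the $f$-vector splits as $\alpha_k(S/I)=\binom{n-1}{k}+\alpha_{k-1}(S'/I_\Gamma)$. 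Substituting this into \eqref{betak}, the binomial part collapses to $\delta_{k0}$ by \eqref{magic}, and the remaining part is exactly $\beta_{k-1}^{n-2}(S'/I_\Gamma)$; thus $\beta_0^{n-1}(S/I)=1$ and $\beta_k^{n-1}(S/I)=\beta_{k-1}^{n-2}(S'/I_\Gamma)$ for $k\ge 1$. Consequently $\qdepth(S/I)=n-1$ is equivalent to $\qdepth(S'/I_\Gamma)=(n-1)-1$, and since $I_\Gamma\neq 0$ the inductive hypothesis gives $I_\Gamma=(v)$. Tracing back the defining equivalence, the squarefree monomials of $I$ are precisely the $x_C$ with $n\in C$ and $\supp(v)\subseteq C$, that is, $I=(v\,x_n)$, which is principal.

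The main obstacle is this last step. Combinatorially $(2)$ and $(3)$ are not equivalent: condition $(2)$ asks that $\beta_k^n(I)\ge 0$, while via the identity $\beta_k^{n-1}(I)+\beta_k^{n-1}(S/I)=1$ (additivity together with $\beta_k^{n-1}(S)=1$, which follows from \eqref{magic}) condition $(3)$ only asks $\beta_k^{n-1}(I)\le 1$, and neither forces the other from the $\beta$-data alone. What bridges them is the ideal structure, and the induction above is the device that extracts it; so the care lies in verifying the recursive identity $\beta_k^{n-1}(S/I)=\beta_{k-1}^{n-2}(S'/I_\Gamma)$ and in handling the base case and the degenerate case $x_n\in I$ cleanly.
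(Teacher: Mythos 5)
Your proposal is correct, but it takes a genuinely different route from the paper on two of the four implications. For $(1)\Rightarrow(2)$ and $(1)\Rightarrow(3)$ the paper stays combinatorial: Lemma \ref{liema} gives $\beta_k^n(I)=\delta_{km}$ and $\beta_k^{n-1}(S/I)\in\{0,1\}$ for principal $I$, and the Hilbert depths are read off from Theorem \ref{d1} together with $\alpha_n(S/I)=0$; you instead observe that $(u)\cong S(-\deg u)$ is free and $S/(u)$ is a Cohen--Macaulay hypersurface, so Theorem \ref{t11} squeezes $\qdepth$ between $\depth$ and $\dim$ --- shorter, and it bypasses Lemma \ref{liema} entirely for this direction. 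Your $(2)\Rightarrow(1)$ coincides with the paper's argument (nonnegative $\beta_j^n(I)$ summing to $\alpha_n(I)=1$), except that before invoking Lemma \ref{liema}(4) you should record $\beta_0^n(I)=\alpha_0(I)=0$ (as $1\notin I$), so that the index $m$ with $\beta_m^n(I)=1$ satisfies $m\geq 1$, as the lemma requires. The real divergence is $(3)\Rightarrow(1)$: the paper argues directly, setting $m:=\alpha_{n-1}(S/I)>0$, showing $\alpha_k(S/I)=\binom{n}{k}$ for $k\leq m-1$ (a monomial of $I$ of degree $\ell<m$ would force $\alpha_{n-1}(S/I)\leq\ell$), and then matching the resulting $\beta$-vector against Lemma \ref{liema}(5); you instead induct on $n$ through the link of a vertex, using the fact that the complement of $I$ contains the full simplex on $[n-1]$ to split $\alpha_k(S/I)=\binom{n-1}{k}+\alpha_{k-1}(S'/I_\Gamma)$, whence by \eqref{betak} and \eqref{magic} the recursion $\beta_0^{n-1}(S/I)=1$ and $\beta_k^{n-1}(S/I)=\beta_{k-1}^{n-2}(S'/I_\Gamma)$ for $k\geq 1$ --- I checked this identity and it is right, as is the traceback $I=(vx_n)$. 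Your induction buys a reusable structural fact (extremality of $\qdepth(S/I)$ passes to the link at a vertex of a full-dimensional facet) and needs Lemma \ref{liema} essentially nowhere, while the paper's argument is non-inductive, shorter, and identifies the generator's degree explicitly as $m=\alpha_{n-1}(S/I)$. Two small points to make explicit in a write-up: the equivalence ``$\qdepth(S/I)=n-1$ iff $\beta_k^{n-1}(S/I)\geq 0$ for all $k$'' silently uses $\qdepth(S/I)\leq\dim(S/I)=n-1$ from Theorem \ref{t11} and $I\neq 0$ (and likewise $\qdepth(S'/I_\Gamma)\leq n-2$ needs $I_\Gamma\neq 0$, which you did verify via $x_1\cdots x_{n-1}\in I_\Gamma$); and the base case $n=1$, where a proper nonzero squarefree ideal is necessarily $(x_1)$, should be stated rather than alluded to.
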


\begin{proof}
Using polarization, we can assume that $I$ is squarefree. 

$(1)\Rightarrow(2)$. Since $I$ is principal, from Lemma \ref{liema} it follows that $\beta_k^n(I)\geq 0$ for all $0\leq k\leq n$.
Therefore, $\qdepth(I)=n$.

$(2)\Rightarrow (1)$. Since $I\neq (0)$ and $\qdepth(I)=n$, it follows that 
\begin{equation}\label{curcu}
1=\alpha_n(I)=\sum_{j=0}^n\beta_j^n(I)\text{ and }\beta_j^n(I)\geq 0\text{ for all }0\leq j\leq n.
\end{equation} 
Also, since $1\notin I$, we have $\beta_0^n(I)=\alpha_0(I)=0$. Hence, from \eqref{curcu} it follows that there
exists some integer $m$ with $1\leq m\leq n$ such that $\beta_m^n(I)=1$ and $\beta_k^n(I)=0$ for all $k\neq m$.
By Lemma \ref{liema}, we get the required result. 

$(1)\Rightarrow(3)$. Also, from Lemma \ref{liema} it follows that $\beta_k^{n-1}(S/I)\geq 0$ for all $0\leq k\leq n$ and thus
$\qdepth(S/I)\geq n-1$. On the other hand, as $I\neq (0)$, $\alpha_n(S/I)=0$ and thus $\qdepth(S/I)\leq n-1$. Hence, we are done.

$(3)\Rightarrow(1)$. Since $I\neq (0)$ and $\qdepth(S/I)=n-1$, it follows that
\begin{equation}\label{curcub}
0 < m:=\alpha_{n-1}(S/I)=\sum_{j=0}^{n-1}\beta_j^{n-1}(S/I)\text{ and }\beta_j^{n-1}(S/I)\geq 0\text{ for all }0\leq j\leq n-1.
\end{equation}
We claim that 
\begin{equation}\label{claimu}
\alpha_k(S/I)=\binom{n}{k}\text{ for all }0\leq k\leq m-1. 
\end{equation}
Indeed, if this is not the case, then there exists a squarefree monomial $v\in I$ with $\deg(v)=\ell<m$,
let's say $v=x_1x_2\cdots x_{\ell}$. 

It follows that $v_k:=x_1\cdots x_{k-1}x_{k+1}\cdots x_n\in I$ for all $\ell+1\leq k\leq n$ and
therefore $\alpha_{n-1}(S/I)\leq \ell<m$, a contradiction. Hence \eqref{claimu} is true.

From \eqref{claimu}, by straightforward computations we get that 
$$\beta_{k}^{n-1}(S/I)=1\text{ for all }0\leq k\leq m-1.$$
Hence, from \eqref{curcub} it follows that $\beta_k^{n-1}(S/I)=0$ for all $m\leq k\leq n-1$.
The required conclusion follows from Lemma \ref{liema}.
\end{proof}

\begin{lema}\label{patrat}
Let $I\subset S$ be a monomial ideal. Let $J:=(I,y_1,\ldots,y_m)\subset R:=S[y_1,\ldots,y_m]$, where $m\geq 1$.
We have that $$\qdepth(J)\geq \min\{\qdepth(S/I)+1,\qdepth(I)+m\}.$$
Moreover, if $\qdepth(I)\geq \qdepth(S/I)$, then $\qdepth(J)\geq \qdepth(R/J)+1$.
\end{lema}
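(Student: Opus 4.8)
The plan is to reduce to the squarefree case and then argue with the interval (Stanley) decompositions that underlie Theorem \ref{d1}; throughout I read the ``$r$'' in the statement as the number $m$ of adjoined variables. Since $y_1,\dots,y_m$ are already squarefree, polarizing only $I$ gives $J^p=(I^p,y_1,\dots,y_m)$, and by Proposition \ref{d2} each of $\qdepth(J),\qdepth(I),\qdepth(S/I),\qdepth(R/J)$ is shifted by the same number $N$ of polarization variables; so I may assume $I$ squarefree. Writing $Y=\{n+1,\dots,n+m\}$ for the indices of the new variables, a squarefree monomial $x_{B\cup C}$ with $B\subseteq[n]$, $C\subseteq Y$ lies in $J$ exactly when $x_B\in I$ or $C\ne\emptyset$; hence $2^{[n+m]}\setminus P_J=P_{S/I}$ (viewed inside $2^{[n]}$), which yields the disjoint decomposition
\[
P_J=\bigl(P_I\times 2^{Y}\bigr)\ \sqcup\ \bigl(P_{S/I}\times(2^{Y}\setminus\{\emptyset\})\bigr),\qquad P_I:=2^{[n]}\setminus P_{S/I}.
\]

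Set $p^\ast:=\qdepth(I)$ and $q^\ast:=\qdepth(S/I)$, and use that a product of two intervals is again an interval whose top is the union of the two tops, so top-sizes add. From an optimal interval partition of $P_I$ (all tops of size $\ge p^\ast$) I replace each $[A,B]$ by $[A,B\cup Y]=[A,B]\times 2^Y$, partitioning $P_I\times 2^Y$ into intervals with tops of size $\ge p^\ast+m$. From an optimal interval partition of $P_{S/I}$ (all tops of size $\ge q^\ast$) together with any interval partition of $2^Y\setminus\{\emptyset\}$ (each such interval omits $\emptyset$, so its bottom, hence its top, is nonempty, giving top-size $\ge 1$), the products partition $P_{S/I}\times(2^Y\setminus\{\emptyset\})$ into intervals with tops of size $\ge q^\ast+1$. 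The union is an interval partition of $P_J$ whose minimal top-size is $\ge\min\{p^\ast+m,\,q^\ast+1\}$, whence $\qdepth(J)\ge\min\{\qdepth(I)+m,\ \qdepth(S/I)+1\}$. (The same bound can be extracted purely from the $\beta$-formula by combining additivity of the $\alpha_k$—hence of the $\beta^q_k$—over the above disjoint union with the identity $\beta^{q+m}_k(P\times 2^Y)=\beta^q_k(P)$, a Chu–Vandermonde computation; the interval picture is just cleaner bookkeeping of the same fact.)

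For the ``moreover'' part, note that the monomials outside $J$ are precisely the squarefree monomials in the $x_i$ that lie outside $I$, with unchanged degrees, so $P_{R/J}=P_{S/I}$ as graded posets. Therefore $\alpha_k(R/J)=\alpha_k(S/I)$ for all $k$, hence $\beta^q_k(R/J)=\beta^q_k(S/I)$ and $\qdepth(R/J)=q^\ast$. If now $\qdepth(I)\ge\qdepth(S/I)$, i.e.\ $p^\ast\ge q^\ast$, then $m\ge1$ forces $p^\ast+m\ge q^\ast+1$, so the minimum above equals $q^\ast+1=\qdepth(R/J)+1$, giving $\qdepth(J)\ge\qdepth(R/J)+1$.

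The step to watch is the passage between the $\beta$-criterion of Theorem \ref{d1} and the interval-decomposition language: I am using that an interval partition of $P_J$ with all tops of size $\ge Q$ forces $\beta^Q_k(J)\ge0$ for all $k$ (the Stanley-type direction behind Theorem \ref{d1}, realizing Hilbert depth $\ge Q$). The only genuinely delicate combinatorial input is that the factor $2^Y\setminus\{\emptyset\}$ contributes merely $+1$, not $+m$, to the tops of the second block; this is exactly what caps the gain at $q^\ast+1$ and is what makes the inequality sharp in general.
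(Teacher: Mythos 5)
Your poset decomposition of $P_J$ is correct and, once you refine the second block as $2^Y\setminus\{\emptyset\}=\bigsqcup_{i=1}^m\bigl[\{y_i\},\{y_i,\ldots,y_m\}\bigr]$, it is precisely the filtration the paper works with ($J_i:=IR+(y_1,\ldots,y_i)R$, with $J_i/J_{i-1}\cong y_i(R/J_{i-1})$). The genuine gap is elsewhere: you start from ``an optimal interval partition of $P_I$ (all tops of size $\geq p^\ast$)'' and likewise for $P_{S/I}$, i.e.\ you assume that Hilbert depth is witnessed by an actual interval partition of the poset. Nothing in the paper provides this. Theorem \ref{d1} is a statement about the numbers $\beta_k^q$ (equivalently, about Hilbert series); the implication running from a partition with tops of size $\geq Q$ to $\beta_k^Q\geq 0$ is the one you correctly flag as the ``step to watch,'' and it is valid --- but what your construction actually needs is the \emph{converse}: from $\qdepth(I)=p^\ast$ deduce a partition of $P_I$ into intervals with tops of size $\geq p^\ast$. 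That converse amounts to $\sdepth(I)\geq\qdepth(I)$, which is neither proved in the paper nor available in general (only $\sdepth\leq\qdepth$ holds, since a Stanley decomposition induces a Hilbert-series decomposition). So, as written, your main argument establishes the lemma only for ideals whose Hilbert depth happens to be achieved by a Stanley decomposition.

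The repair is to run the entire argument at the level of $\beta$-numbers, which is what the paper does: the $\alpha_k$, hence the $\beta_k^d$, are additive over the decomposition $P_J=(P_I\times 2^Y)\sqcup\bigsqcup_{i=1}^m\bigl(P_{S/I}\times[\{y_i\},\{y_i,\ldots,y_m\}]\bigr)$; the piece $P_I\times 2^Y$ corresponds to $IR$ with $\qdepth(IR)=\qdepth(I)+m$, and the $i$-th piece corresponds to $y_i\cdot(S/I)[y_i,\ldots,y_m]$ with $\qdepth$ equal to $\qdepth(S/I)+m-i+1\geq\qdepth(S/I)+1$ --- these are the cited facts from \cite{lucrare2} about adjoining a variable and multiplying by a variable, not partition statements. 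Corollary \ref{max} then lowers each piece to the common level $D=\min\{\qdepth(I)+m,\,\qdepth(S/I)+1\}$, and additivity gives $\beta_k^D(J)\geq 0$ for all $k$, i.e.\ $\qdepth(J)\geq D$. Note that your parenthetical ``purely $\beta$-formula'' remark covers only the first piece (via $\beta_k^{q+m}(P\times 2^Y)=\beta_k^q(P)$); for $P_{S/I}\times(2^Y\setminus\{\emptyset\})$ you still need the finer decomposition above, since $\beta$'s of a set difference are not controlled termwise. Your polarization reduction and the ``moreover'' part ($P_{R/J}=P_{S/I}$, so $\qdepth(R/J)=\qdepth(S/I)$, and $m\geq 1$ makes the minimum equal $\qdepth(S/I)+1$) are fine.
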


\begin{proof}
Let $J_k:=IR+(y_1,\ldots,y_k)R\subset R$ for $0\leq k\leq m$.
From the filtration $IR=J_0\subset J_1\subset \cdots \subset J_m=J$, we deduce the following $K$-vector space decomposition
\begin{equation}\label{kuku1}
J=J_0 \oplus J_1/J_0 \oplus \cdots \oplus J_m/J_{m-1}.
\end{equation} 
From \eqref{kuku1} it follows that
\begin{equation}\label{kukuu1}
\beta_k^d(J)=\beta_k^d(J_0)+\beta_k^d(J_1/J_0)+\cdots+\beta_k^d(J_m/J_{m-1})\text{ for all }0\leq k\leq d\leq n+m.
\end{equation}
Note that $J_i/J_{i-1} \cong y_i(R/J_{i-1})$ for all $1\leq i\leq m$. Hence, from \cite[Lemma 2.13]{lucrare2} and
\cite[Theorem 2.22]{lucrare2} it follows that 
$$\qdepth(J_i/J_{i-1})=\qdepth(R/J_{i-1})=\qdepth((S/I)[y_i,\ldots,y_m])=$$
\begin{equation}\label{kuku2}
=\qdepth(S/I)+m-i+1\text{ for all }1\leq i\leq m.
\end{equation}
On the other hand, again from \cite[Lemma 2.10]{lucrare2} we have that
\begin{equation}\label{kuku3}
\qdepth(J_0)=\qdepth(IR)=\qdepth(I)+m.
\end{equation}
From \eqref{kukuu1}, \eqref{kuku2} and \eqref{kuku3} we get
$$\qdepth(J)\geq \min\{\qdepth(S/I)+1,\qdepth(I)+m\},$$
as required. Also, since $R/J\cong S/I$, the last assertion follows immediately.
\end{proof}

\begin{obs}\label{reducere}\rm
Let $I\subset S$ be a squarefree monomial ideal with $\qdepth(S/I)=q$. According to Theorem \ref{teo1}, if $q=n-1$, then $I$ is principal
and, therefore, $\qdepth(I)=n$. Hence, in order to compare $\qdepth(S/I)$ with $\qdepth(I)$ we can assume $q\leq n-2$. Another reduction
we can make is to assume that $I\subset \me^2$, where $\me= (x_1,\ldots,x_n)$. 

Indeed, if this is not the case, then, by reordering the variables, we can write 
$$I=(I',x_{m+1},\ldots,x_n)\text{ where }I'\subset S'=K[x_1,\ldots,x_m]\text{ with }I'\subset \me'^2\text{ and }$$ 
$\me'=(x_1,\ldots,x_m)S'$. According to Lemma \ref{patrat}, if $\qdepth(I')\geq \qdepth(S'/I')$, then 
$$\qdepth(I)\geq \qdepth(S/I)+1.$$
Note that $I\subset \me^2$ if and only if $\alpha_0(I)=\alpha_1(I)=0$, or, equivalently, $\alpha_0(S/I)=1$ and $\alpha_1(S/I)=n$.
\end{obs}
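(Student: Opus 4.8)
The statement bundles two independent \emph{a priori} reductions with one elementary dictionary, so my plan is to dispatch the three pieces in turn. Throughout I take $I$ to be a proper nonzero squarefree monomial ideal, which is the only case of interest for the comparison.

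\emph{First reduction, to $q\le n-2$.} The plan is to combine Theorem \ref{t11} with Theorem \ref{teo1}. Since $I\neq 0$ we have $\operatorname{ht}(I)\geq 1$, hence $\dim(S/I)=n-\operatorname{ht}(I)\leq n-1$; plugging this into Theorem \ref{t11} gives $q=\qdepth(S/I)\leq\dim(S/I)\leq n-1$. Thus the only value of $q$ lying outside the range $q\leq n-2$ is $q=n-1$. But $\qdepth(S/I)=n-1$ is exactly condition (3) of Theorem \ref{teo1}, which forces $I$ to be principal, and then condition (2) yields $\qdepth(I)=n=\qdepth(S/I)+1$. So in this single boundary case the desired comparison already holds, and we may restrict attention to $q\leq n-2$. (The printed ``$n=q-1$'' should be read as $q=n-1$.)

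\emph{Second reduction, to $I\subset\me^2$.} Suppose $I\not\subset\me^2$; the plan is to split off the variable generators and invoke the ``moreover'' clause of Lemma \ref{patrat}. Since $I\not\subset\me^2$, at least one variable lies in $I$; after reordering let these variables be $x_{m+1},\ldots,x_n$ (so $n-m\geq 1$) and let $x_1,\ldots,x_m\notin I$. I would first verify that every remaining minimal generator $u$ of $I$ lies in $S'=K[x_1,\ldots,x_m]$ and has degree $\geq 2$: if some $x_j$ with $j>m$ divided $u$, then $u$ would fail to be minimal (as $x_j\in I$ already), so $u\in S'$; and $\deg u=1$ would force $u=x_i$ with $i\leq m$, against $x_i\notin I$. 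Hence $I=(I',x_{m+1},\ldots,x_n)$ with $I'\subset S'$ and $I'\subset\me'^2$, where $\me'=(x_1,\ldots,x_m)S'$. Now apply Lemma \ref{patrat} with base ring $S'$, ideal $I'$, and the $n-m\geq 1$ adjoined variables $x_{m+1},\ldots,x_n$ in the role of the $y_i$'s; in the lemma's notation its $R$ is exactly our $S$ and its $J$ is exactly our $I$, so the ``moreover'' clause states precisely that $\qdepth(I')\geq\qdepth(S'/I')$ implies $\qdepth(I)\geq\qdepth(S/I)+1$. This reduces the target inequality for $I$ to the same statement for the strictly smaller ideal $I'\subset\me'^2$, as claimed.

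\emph{The dictionary.} The equivalences $I\subset\me^2\Leftrightarrow\alpha_0(I)=\alpha_1(I)=0\Leftrightarrow(\alpha_0(S/I)=1,\ \alpha_1(S/I)=n)$ are immediate: $\alpha_0(I)$ records whether $1=x_\emptyset\in I$, which never occurs for a proper $I$; $\alpha_1(I)$ counts the variables contained in $I$, so it vanishes exactly when $I\subset\me^2$; and the translation to $S/I$ is $\alpha_k(S/I)=\binom{n}{k}-\alpha_k(I)$ read off at $k=0,1$. I expect the only genuinely delicate point to be the bookkeeping in the second reduction — confirming that after extracting the degree-one generators the residual ideal $I'$ really lands inside $\me'^2$, and lining up the notation of Lemma \ref{patrat} so that the $n-m\geq 1$ adjoined variables meet that lemma's requirement of at least one new variable. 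Everything else is a direct citation.
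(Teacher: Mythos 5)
Your proof is correct and follows essentially the same route as the paper: Theorem \ref{t11} together with Theorem \ref{teo1} for the boundary case $q=n-1$ (you rightly read the paper's ``$n=q-1$'' as a typo for $q=n-1$), the splitting $I=(I',x_{m+1},\ldots,x_n)$ combined with the ``moreover'' clause of Lemma \ref{patrat}, and a direct count for the equivalences involving $\alpha_0$ and $\alpha_1$. Your only additions are routine bookkeeping the paper leaves implicit, namely checking that the residual minimal generators lie in $S'$ and have degree at least two, so that $I'\subset \me'^2$.
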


\section{Main results}

Given two positive integers $\ell,k$ there is a unique way
to expand $\ell$ as a sum of binomial coefficients, as follows
$$ \ell = \binom{n_k}{k}+\binom{n_{k-1}}{k-1}+\cdots+\binom{n_j}{j},\; n_k>n_{k-1}>\cdots>n_j\geq j\geq 1.$$
This expansion is constructed using the greedy algorithm, i.e. setting $n_k$ to be the maximal $n$ such that $\ell\geq\binom{n}{k}$,
replace $\ell$ with $\ell-\binom{n_k}{k}$ and $k$ with $k-1$ and repeat until the difference becomes zero.
We define $$\ell^{(k)}=\binom{n_k}{k+1}+\binom{n_{k-1}}{k}+\cdots+\binom{n_j}{j+1}.$$
We recall the famous Kruskal–Katona Theorem,
which gives a complete characterization of the $f$-vectors of simplicial complexes.

\begin{teor}(\cite[Theorem 2.1]{stanley})\label{krusk}
A vector $f=(1=f_{-1},f_0,f_1,\ldots,f_{d-1})$ is the $f$-vector of some simplicial complex $\Delta$ of dimension $d-1$
if and only if
$$ 0 < f_{i}\leq f_{i-1}^{(i-1)}\text{ for all }1\leq i\leq d-1. $$
\end{teor}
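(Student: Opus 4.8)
The plan is to reduce the $f$-vector characterization to the classical \emph{shadow} formulation of the Kruskal--Katona theorem and then to prove the latter by the compression (shifting) technique. For a family $\mathcal{A}$ of $r$-element subsets of $[n]$ write $\partial\mathcal{A}$ for its \emph{shadow}, the family of all $(r-1)$-subsets contained in some member of $\mathcal{A}$. Identifying the $i$-dimensional faces of a simplicial complex $\Delta$ with a family $\mathcal{F}_i$ of $(i+1)$-subsets, the defining closure property of $\Delta$ is exactly $\partial\mathcal{F}_i\subseteq\mathcal{F}_{i-1}$, so that $f_{i-1}\geq|\partial\mathcal{F}_i|$. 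Hence the whole theorem follows once one proves the two halves of the shadow statement: that among all families of $r$-subsets of a prescribed cardinality the shadow is \emph{minimized} by the initial segment of the colexicographic order, and that the size of this minimal shadow is the value returned by the pseudopower operation $\ell\mapsto\ell^{(k)}$ defined above. The necessity of the inequalities in Theorem \ref{krusk} is then the contrapositive of the resulting shadow lower bound, and their sufficiency comes from an explicit construction.

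The heart of the matter is the shadow-minimization lemma, which I would prove by \emph{compression}. For $1\le i<j\le n$ let $C_{ij}$ be the operator that replaces each $A\in\mathcal{A}$ with $j\in A$ and $i\notin A$ by $(A\setminus\{j\})\cup\{i\}$, unless the latter set already belongs to $\mathcal{A}$, in which case $A$ is retained. One verifies two facts: $|C_{ij}(\mathcal{A})|=|\mathcal{A}|$, and $|\partial C_{ij}(\mathcal{A})|\le|\partial\mathcal{A}|$. Only the second is delicate, and it is settled by a short case analysis that tracks, for a fixed $(r-1)$-set, which of its preimages survive the compression. Whenever a compression actually changes $\mathcal{A}$ it strictly decreases the potential $\sum_{A\in\mathcal{A}}\sum_{a\in A}a$, so iterating the operators $C_{ij}$ with $i<j$ terminates at a \emph{left-compressed} family fixed by every $C_{ij}$; since no compression enlarges the shadow, it suffices to bound the shadow of such a family.

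To finish I would analyze a left-compressed family by induction on $n$ and $r$, splitting $\mathcal{A}$ according to membership of the top element $n$. Setting $\mathcal{A}_0=\{A\in\mathcal{A}:n\notin A\}$ and $\mathcal{A}_1=\{A\setminus\{n\}:n\in A\in\mathcal{A}\}$, and writing $\{n\}\ast\mathcal{B}=\{\{n\}\cup C:C\in\mathcal{B}\}$, one separates the shadow into the $(r-1)$-sets that avoid $n$ and those that contain it, obtaining
\[
\partial\mathcal{A}=\bigl(\partial\mathcal{A}_0\cup\mathcal{A}_1\bigr)\sqcup\bigl(\{n\}\ast\partial\mathcal{A}_1\bigr),\qquad
|\partial\mathcal{A}|=|\partial\mathcal{A}_0\cup\mathcal{A}_1|+|\partial\mathcal{A}_1|.
\]
Left-compressedness constrains how $\mathcal{A}_0$ and $\mathcal{A}_1$ interact, so each term may be bounded by the inductive hypothesis; feeding in the cascade (binomial) representation of $|\mathcal{A}|$ and simplifying the resulting binomial sums shows that the minimum shadow is exactly the quantity predicted by $\ell\mapsto\ell^{(k)}$, with the colexicographic initial segment attaining it. For sufficiency one runs the construction in reverse: given a vector $f$ with $0<f_i\le f_{i-1}^{(i-1)}$, take at each level the colexicographic initial segment of size $f_i$; the inequalities are precisely what guarantee that the shadow of each level lands inside the initial segment chosen one level down, so the union of all these initial segments is a simplicial complex with the prescribed $f$-vector.

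I expect the two technical cores to be the genuine obstacles: proving that a single compression cannot increase the shadow (true, but requiring an honest case distinction rather than a one-line argument), and carrying out the binomial bookkeeping that matches the recursive shadow count against the closed form $\ell^{(k)}$. The most error-prone point is the index shift between set-cardinality and face-dimension inside the cascade representation, together with the small monotonicity lemma for $\ell\mapsto\ell^{(k)}$ that is needed to pass between the lower-bound (shadow) form and the upper-bound ($f$-vector) form of the statement.
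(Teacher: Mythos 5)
The paper offers no proof of Theorem \ref{krusk} at all --- it is imported verbatim from Stanley's book --- so there is no internal argument to compare yours with; the right benchmark is the classical literature, and your sketch is exactly the standard compression (shifting) proof: reduce the $f$-vector statement to shadow minimization, show that colexicographic initial segments minimize shadows via the operators $C_{ij}$, and run the reverse construction for sufficiency. The outline is sound: the decomposition $\partial\mathcal{A}=(\partial\mathcal{A}_0\cup\mathcal{A}_1)\sqcup(\{n\}\ast\partial\mathcal{A}_1)$ is correct, the potential $\sum_{A\in\mathcal{A}}\sum_{a\in A}a$ does strictly decrease under any effective compression, and the sufficiency argument via colex initial segments at every level is the standard one. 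One concrete caution on the induction step: with your splitting at the top element $n$, left-compressedness does \emph{not} give a containment between $\partial\mathcal{A}_0$ and $\mathcal{A}_1$; what it gives is that $B\cup\{y\}\in\mathcal{A}_0$ for every $B\in\mathcal{A}_1$ and every $y\in[n-1]\setminus B$ (apply $C_{yn}$-stability to $B\cup\{n\}$). If you want the cleaner containment $\partial\mathcal{A}_0\subseteq\mathcal{A}_1$, split at the bottom element $1$ instead, where $C_{1x}$-stability yields it in one line. Finally, be aware that you have deferred precisely the two genuinely hard steps --- the case analysis establishing $|\partial C_{ij}(\mathcal{A})|\le|\partial\mathcal{A}|$ and the binomial bookkeeping matching the recursive shadow count against $\ell\mapsto\ell^{(k)}$ --- so as submitted this is a correct plan rather than a complete proof; both steps are standard but neither is a formality.

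Your closing worry about the index shift between set-cardinality and face-dimension is well-founded, and in fact it bites the paper's own transcription of the theorem: since $f_{i-1}$ counts $i$-element sets, its cascade expansion should be taken with respect to $i$, and the inequality should read $f_i\le f_{i-1}^{(i)}$; as printed, $f_i\le f_{i-1}^{(i-1)}$ is off by one (for $i=1$ it would invoke the undefined expression $f_0^{(0)}$). The corrected form is the one the paper actually uses downstream, via $\alpha_j=f_{j-1}$, in Lemma \ref{cord}: $\alpha_k\le\alpha_{k-1}^{(k-1)}$ for $2\le k\le d$.
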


In the following, $(0)\neq I\subset S$ is a proper squarefree monomial ideal with $\qdepth(S/I)=q$, unless otherwise stated.
Moreover, according to Remark \ref{reducere}, we will assume $q\leq n-2$, $\alpha_0(I)=\alpha_1(I)=0$, $\alpha_0(S/I)=1$ and $\alpha_1(S/I)=n$.
In other words, the ideal $I$ is not principal and $I\subset \mathfrak m^2$.

The following result is a direct consequence of Theorem \ref{krusk} and of the interpretation of $I$ as the Stanley-Reisner ideal
associated to a simplicial complex: 

\begin{lema}\label{cord}
We have that:
$$ 0 < \alpha_k(S/I) \leq \alpha_{k-1}(S/I)^{(k-1)} \text{ for all }2\leq k\leq d.$$
In particular, if $\alpha_k(S/I)=\binom{n_k}{k} + \binom{n_{k-1}}{k-1}+\cdots+\binom{n_j}{j}$, where $2\leq k\leq d$ and $j\geq 1$, then
$$\alpha_{k-1}(S/I)\geq \binom{n_k}{k-1}+\binom{n_{k-1}}{k-2}+\cdots+\binom{n_j}{j-1}.$$
\end{lema}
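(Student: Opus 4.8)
The plan is to pass through the Stanley--Reisner correspondence and then quote Kruskal--Katona in its two (equivalent) shapes. Since $I$ is squarefree, it is the Stanley--Reisner ideal of the simplicial complex $\Delta=\{A\subseteq[n]\;:\;x_A\notin I\}$ on the vertex set $[n]$, which is precisely the poset $P_{S/I}$. Under this identification a subset $A$ with $|A|=k$ is a face of $\Delta$ of dimension $k-1$ if and only if $x_A\in S\setminus I$, so $\alpha_k(S/I)$ equals the number $f_{k-1}$ of $(k-1)$-dimensional faces of $\Delta$ for every $0\le k\le n$ (in particular $\alpha_0(S/I)=f_{-1}=1$). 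Moreover $\dim\Delta=\dim(S/I)-1=d-1$, so that the $f$-vector of $\Delta$ is $(f_{-1},f_0,\dots,f_{d-1})=(\alpha_0(S/I),\alpha_1(S/I),\dots,\alpha_d(S/I))$.

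First I would settle the strict positivity: as $\Delta$ has a face of dimension $d-1$, every subset of that face is again a face, so $\Delta$ carries faces of each cardinality $0,1,\dots,d$, whence $\alpha_k(S/I)=f_{k-1}>0$ for $0\le k\le d$. Applying Theorem \ref{krusk} to the $f$-vector above then gives, after translating $f_{k-1}=\alpha_k(S/I)$ and matching the superscript in the operation $\ell\mapsto\ell^{(k-1)}$ to the cardinality $k-1$ of the faces counted by $\alpha_{k-1}(S/I)$, exactly the first assertion $0<\alpha_k(S/I)\le\alpha_{k-1}(S/I)^{(k-1)}$ for $2\le k\le d$.

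For the ``in particular'' I would argue directly with lower shadows, which is the classical formulation of Kruskal--Katona equivalent to Theorem \ref{krusk}. The $k$-faces of $\Delta$ form a family $\mathcal F$ of $k$-element subsets of $[n]$ with $|\mathcal F|=\alpha_k(S/I)=\binom{n_k}{k}+\cdots+\binom{n_j}{j}$. Because $\Delta$ is a simplicial complex, every $(k-1)$-element subset of a member of $\mathcal F$ is again a face, so the lower shadow $\partial\mathcal F$ is contained in the set of $(k-1)$-dimensional faces, giving $|\partial\mathcal F|\le\alpha_{k-1}(S/I)$. The Kruskal--Katona shadow bound reads $|\partial\mathcal F|\ge\binom{n_k}{k-1}+\cdots+\binom{n_j}{j-1}$, and combining the two inequalities produces the claimed estimate on $\alpha_{k-1}(S/I)$.

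The delicate point, and the reason I would not try to deduce the second assertion formally from the first by ``applying the inverse operation'', is that the Kruskal--Katona map $\ell\mapsto\ell^{(k-1)}$ is only weakly increasing (distinct values of $\ell$ can share the same image), so a naive monotonicity argument does not convert the up-shifted inequality $\alpha_k(S/I)\le\alpha_{k-1}(S/I)^{(k-1)}$ into the down-shifted shadow inequality. Handling this cleanly is exactly why I would invoke the shadow form of the theorem directly for the second claim; the only remaining care is the routine bookkeeping between the dimension index of the $f$-vector and the cardinality index $k$ of $\alpha_k(S/I)$.
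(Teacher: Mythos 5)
Your proof is correct and takes essentially the same route as the paper: identify $\alpha_j(S/I)$ with $f_{j-1}(\Delta)$ for the Stanley--Reisner complex $\Delta$ of $I$, note positivity from the existence of a face of cardinality $d$, and invoke Kruskal--Katona (Theorem \ref{krusk}). The paper's own proof is exactly this, stated in two lines; your only refinement is to justify the ``in particular'' clause by citing the equivalent lower-shadow formulation of Kruskal--Katona rather than leaving it as following ``easily'' from the $f$-vector form, which is a legitimate and somewhat more careful way to fill in that step.
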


\begin{proof}
Let $\Delta$ be the Stanley-Reisner simplicial complex associated to $I$. Let $d=\dim(\Delta)+1$.
It is easy to note that
$$\alpha_j(S/I)=f_{j-1}(S/I),\text{ for all }0\leq j\leq d.$$
The conclusion follows easily from Theorem \ref{krusk}.
\end{proof}



\begin{lema}\label{lem1}
Let $I\subset S$ be a squarefree monomial ideal. Then:
$$\beta_k^q(I)=\binom{n-q+k-1}{k}-\beta_k^q(S/I),\text{ for all }0\leq k\leq q\leq n.$$
\end{lema}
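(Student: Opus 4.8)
The plan is to derive the identity directly from the elementary counting relation
$$\alpha_j(I)=\binom{n}{j}-\alpha_j(S/I),\quad 0\le j\le n,$$
which holds because the $\binom{n}{j}$ squarefree monomials of degree $j$ partition into those belonging to $I$ and those belonging to $S\setminus I$. This is precisely the observation already invoked in the step $(2)\Leftrightarrow(3)$ of Lemma \ref{liema}, so no new input is required here.

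First I would expand $\beta_k^q(I)$ using its defining formula \eqref{betak} and substitute the counting relation for each $\alpha_j(I)$, then split the resulting expression into two sums:
$$\beta_k^q(I)=\sum_{j=0}^k(-1)^{k-j}\binom{q-j}{k-j}\binom{n}{j}-\sum_{j=0}^k(-1)^{k-j}\binom{q-j}{k-j}\alpha_j(S/I).$$
The second sum is, by definition \eqref{betak}, exactly $\beta_k^q(S/I)$, so the entire problem reduces to evaluating the first sum.

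The only genuine computation is to recognize the first sum as an instance of the combinatorial identity \eqref{magic}, whence it equals $\binom{n-q+k-1}{k}$. Combining the two pieces gives
$$\beta_k^q(I)=\binom{n-q+k-1}{k}-\beta_k^q(S/I),$$
as claimed. Since the argument is a single substitution followed by one appeal to \eqref{magic}, I do not anticipate any real obstacle; the only point requiring care is to check that the range of summation and the binomial indices match the hypotheses $0\le k\le q\le n$ under which \eqref{magic} is stated, which they do verbatim.
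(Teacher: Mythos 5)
Your proposal is correct and is essentially the paper's own argument spelled out in full: the paper's one-line proof of Lemma \ref{lem1} likewise combines the complementation relation $\alpha_j(I)=\binom{n}{j}-\alpha_j(S/I)$ with the identity \eqref{magic}. You have merely made the substitution into \eqref{betak} and the splitting of the sum explicit, which is exactly what the paper leaves implicit.
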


\begin{proof}
It follows from the obvious fact that $\alpha_j(I)=\binom{n}{j}-\alpha_j(S/I)$, for all $0\leq j\leq n$, and Lemma \ref{magic}.
\end{proof}

\begin{lema}\label{lem2}
The following are equivalent:
\begin{enumerate}
\item[(1)] $\qdepth(I)\geq \qdepth(S/I)+1$.
\item[(2)] $\beta_{k+1}^{q+1}(S/I)\leq \binom{n-q+k-1}{k+1},\text{ for all }0\leq k\leq q$.
\end{enumerate}
\end{lema}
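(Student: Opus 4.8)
The plan is to read condition (1) through the combinatorial characterization of Hilbert depth and then transport it from $I$ to $S/I$ using Lemma \ref{lem1}. Recall that $q=\qdepth(S/I)$. By Theorem \ref{d1} the quantity $\qdepth(I)$ is the largest $q'$ for which $\beta_k^{q'}(I)\ge 0$ for all $0\le k\le q'$, and by Corollary \ref{max} the set of such $q'$ is an initial segment of $\{0,1,\ldots,n\}$. Hence the first step is simply to observe that $\qdepth(I)\ge q+1$ holds if and only if $\beta_k^{q+1}(I)\ge 0$ for every $0\le k\le q+1$.

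Next I would dispose of the boundary index $k=0$ for free. Since $\beta_0^{q+1}(I)=\alpha_0(I)$ and $I$ is a proper ideal (so $1\notin I$ and $\alpha_0(I)=0$), the inequality $\beta_0^{q+1}(I)\ge 0$ holds automatically. Thus $\qdepth(I)\ge q+1$ is equivalent to requiring $\beta_k^{q+1}(I)\ge 0$ only for $1\le k\le q+1$.

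The third step is to rewrite each of these nonnegativity conditions in terms of $S/I$. Applying Lemma \ref{lem1} with $q$ replaced by $q+1$ gives $\beta_k^{q+1}(I)=\binom{n-q+k-2}{k}-\beta_k^{q+1}(S/I)$, so $\beta_k^{q+1}(I)\ge 0$ is the same as $\beta_k^{q+1}(S/I)\le\binom{n-q+k-2}{k}$. Running this over $1\le k\le q+1$ and then performing the substitution $k\mapsto k+1$ turns the index range into $0\le k\le q$ and the bound into $\binom{n-q+k-1}{k+1}$, which is exactly condition (2). This closes the chain of equivalences.

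The argument is essentially a bookkeeping chain of equivalences, so I do not expect a genuine obstacle; the only points demanding care are matching the binomial indices after the shift $k\mapsto k+1$ (verifying that $\binom{n-(q+1)+k-1}{k}$ evaluated at $k+1$ equals $\binom{n-q+k-1}{k+1}$) and confirming that the $k=0$ term is indeed handled automatically by the properness of $I$, so that it does not need to be encoded in condition (2).
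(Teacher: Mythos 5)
Your proof is correct and follows essentially the same route as the paper: handle the index $k=0$ via properness of $I$, then apply Lemma \ref{lem1} with $q+1$ in place of $q$ and shift $k\mapsto k+1$ to get $\beta_{k+1}^{q+1}(I)=\binom{n-q+k-1}{k+1}-\beta_{k+1}^{q+1}(S/I)$ for $0\leq k\leq q$, with Theorem \ref{d1} and Corollary \ref{max} supplying the equivalence between $\qdepth(I)\geq q+1$ and nonnegativity of all $\beta_k^{q+1}(I)$. The only differences are cosmetic: you make the appeal to Corollary \ref{max} explicit (the paper leaves it implicit in ``the result follows immediately'') and you correctly record $\beta_0^{q+1}(I)=\alpha_0(I)=0$, whereas the paper writes $\beta_0^{q+1}(I)=1$, a harmless slip since only $\beta_0^{q+1}(I)\geq 0$ is needed.
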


\begin{proof}
First, note that $\beta_0^{q+1}(I)=0$. Also, from Lemma \ref{lem1} it follows that
$$\beta_{k+1}^{q+1}(I) = \binom{n-q+k-1}{k+1} - \beta_{k+1}^{q+1}(S/I),
\text{ for all }0\leq k\leq q.$$
The result follows immediately.
\end{proof}

In the following, our aim is to prove the condition (2) of Lemma \ref{lem1} for $q\leq 3$ and $0\leq k\leq q$.

\begin{lema}\label{lem3}
For any $0\leq k\leq q+1$, we have that 
$$\beta_k^k(S/I)\leq \alpha_k(S/I).$$
\end{lema}

\begin{proof}
First, note that $\beta_0^0(S/I)=\alpha_0(S/I)=1$. 

Assume that $1\leq k\leq q+1$.
From Corollary \ref{max} and \eqref{betak} it follows that 
$$\beta_{k}^{k}(S/I)=\alpha_k(S/I) - \beta_{k-1}^{k-1}(S/I)\leq \alpha_k(S/I),$$
as required.
\end{proof}

\begin{lema}\label{lem4}
If $q\geq 1$, then $\beta_2^{q+1}(S/I)\leq \binom{n-q}{2}$. 
\end{lema}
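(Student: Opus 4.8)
The plan is to expand $\beta_2^{q+1}(S/I)$ directly from the definition \eqref{betak} and then bound the two nontrivial $\alpha$-terms that appear. Since $\alpha_0(S/I)=1$, the definition gives
\[
\beta_2^{q+1}(S/I)=\binom{q+1}{2}-q\,\alpha_1(S/I)+\alpha_2(S/I),
\]
so everything reduces to controlling $\alpha_1(S/I)$ and $\alpha_2(S/I)$.

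The two inputs I would assemble are a lower bound $\alpha_1(S/I)\geq q$ and an upper bound $\alpha_2(S/I)\leq \binom{\alpha_1(S/I)}{2}$. The lower bound is immediate from the hypothesis: because $\qdepth(S/I)=q$ we have $\beta_1^{q}(S/I)\geq 0$ by Theorem \ref{d1}, and a one-line computation gives $\beta_1^q(S/I)=\alpha_1(S/I)-q$, whence $\alpha_1(S/I)\geq q$ (this is where $q\geq 1$ is used, so that $k=1\leq q$). The upper bound on $\alpha_2(S/I)$ is exactly the case $k=2$ of Lemma \ref{cord}: a squarefree monomial of degree two outside $I$ is a product of two variables outside $I$, so $\alpha_2(S/I)$ counts edges on $\alpha_1(S/I)$ vertices, and $\alpha_1^{(1)}=\binom{\alpha_1}{2}$. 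The degenerate case $\dim(S/I)<2$ is harmless, since then $\alpha_2(S/I)=0\leq\binom{\alpha_1(S/I)}{2}$.

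Substituting $\alpha_2(S/I)\leq \binom{\alpha_1(S/I)}{2}$ and writing $a:=\alpha_1(S/I)$, the estimate collapses to
\[
\beta_2^{q+1}(S/I)\leq \binom{q+1}{2}-q a+\binom{a}{2}=\binom{a-q}{2},
\]
the last equality being the routine algebraic identity in which both sides equal $\tfrac12(a-q)(a-q-1)$. Finally, since $q\leq a\leq n$ we have $0\leq a-q\leq n-q$, so monotonicity of $\binom{\,\cdot\,}{2}$ on the nonnegative integers yields $\binom{a-q}{2}\leq\binom{n-q}{2}$, which is the claim.

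I do not expect a genuine obstacle here: once $\beta_2^{q+1}(S/I)$ is written out, the argument hinges on spotting the collapse $\binom{q+1}{2}-qa+\binom{a}{2}=\binom{a-q}{2}$ and on having $a\geq q$ available. The only point that needs a moment of care is ensuring the bound $\alpha_2\leq\binom{\alpha_1}{2}$ is invoked in a way valid even when $\dim(S/I)$ is small, which is why I would flag the degenerate case $\alpha_2(S/I)=0$ explicitly rather than blindly citing Lemma \ref{cord} for $k=2$.
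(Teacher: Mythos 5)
Your proof is correct. Both you and the paper start from the same expansion $\beta_2^{q+1}(S/I)=\alpha_2(S/I)-q\,\alpha_1(S/I)+\binom{q+1}{2}$ and both rest on the Kruskal--Katona relation between $\alpha_1$ and $\alpha_2$ (Lemma \ref{cord}), but you use it in the opposite direction, which changes the structure of the argument. The paper parametrizes by the binomial expansion of $\alpha_2$, splitting into the cases $\alpha_2=\binom{n_2}{2}$ and $\alpha_2=\binom{n_2}{2}+\binom{n_1}{1}$, deduces the lower bounds $\alpha_1\geq n_2$ (resp.\ $\alpha_1\geq n_2+1$), and runs the computation twice, arriving at $\binom{n_2-q}{2}$ resp.\ $\binom{n_2-q+1}{2}$, each at most $\binom{n-q}{2}$. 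You instead keep $a=\alpha_1(S/I)$ as the parameter, bound $\alpha_2(S/I)\leq\binom{a}{2}$, and apply the identity $\binom{q+1}{2}-qa+\binom{a}{2}=\binom{a-q}{2}$ once, with no case split. Your route requires one input the paper never uses, namely $\alpha_1(S/I)\geq q$ coming from $\beta_1^q(S/I)\geq 0$ (this is where the standing hypothesis $\qdepth(S/I)=q$ and the assumption $q\geq 1$ enter); in exchange you avoid the case analysis, and having $a\geq q$ in hand makes the final monotonicity step $\binom{a-q}{2}\leq\binom{n-q}{2}$ airtight, a point the paper leaves implicit, since its inequality $\binom{n_2-q}{2}\leq\binom{n-q}{2}$ is a monotonicity statement only when $n_2\geq q$. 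Your explicit treatment of the degenerate case $\dim(S/I)<2$, where Lemma \ref{cord} at $k=2$ does not literally apply but $\alpha_2(S/I)=0$ trivializes the bound, is also a sensible precaution.
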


\begin{proof}
From \eqref{betak}, since $\alpha_0(S/I)=1$, it follows that
\begin{equation}\label{beta2}
\beta_2^{q+1}(S/I) = \alpha_2(S/I) - q \alpha_1(S/I) + \binom{q+1}{2}.
\end{equation}
If $\alpha_2(S/I)=\binom{n_2}{2}$ for some integer $2\leq n_2 \leq n$, then, from Lemma \ref{cord}, it follows
that $\alpha_1(S/I)\geq \binom{n_2}{1}=n_2$. Therefore, from \eqref{beta2} and Lemma \ref{magic} we get
\begin{equation}\label{caz21}
\beta_2^{q+1}(S/I) \leq \binom{n_2}{2} - qn_2 + \binom{q+1}{2} = \binom{n_2-q}{2} \leq \binom{n-q}{2}.
\end{equation}
If $\alpha_2(S/I)=\binom{n_2}{2}+\binom{n_1}{1}$ for $n>n_2>n_1\geq 1$, then, from Lemma \ref{cord},
it follows that $\alpha_1(S/I)\geq n_2+1$. As above, we get
\begin{equation}\label{caz22}
\beta_2^{q+1}(S/I) \leq \binom{n_2}{2} +n_2-1 - qn_2-q + \binom{q+1}{2} < \binom{n_2-q+1}{2}\leq \binom{n-q}{2}.
\end{equation}
From \eqref{caz21} and \eqref{caz22} we get the required result.
\end{proof}

\begin{lema}\label{lem5}
If $q=2$, then $\beta_3^3(S/I)\leq \binom{n-1}{3}$.
\end{lema}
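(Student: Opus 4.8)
The plan is to compute $\beta_3^3(S/I)$ explicitly from \eqref{betak} and then sharpen the trivial bound of Lemma \ref{lem3} using the Kruskal--Katona estimate of Lemma \ref{cord}. Since $q=k=3$ forces every coefficient $\binom{3-j}{3-j}$ in \eqref{betak} to equal $1$, and since $\alpha_0(S/I)=1$ as $I$ is proper, we obtain
\begin{equation}\label{b33}
\beta_3^3(S/I)=\sum_{j=0}^3(-1)^{3-j}\alpha_j(S/I)=\alpha_3(S/I)-\alpha_2(S/I)+\alpha_1(S/I)-1.
\end{equation}
We always have $\alpha_1(S/I)\leq n$, hence $\alpha_1(S/I)-1\leq n-1$. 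Recalling that $q=2\leq d=\dim(S/I)$, I would split the argument according to whether $d=2$ or $d\geq 3$.

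If $d=2$, then $\alpha_3(S/I)=0$. Since $q=2$, Corollary \ref{max} gives $\beta_2^2(S/I)\geq 0$, that is $\alpha_2(S/I)\geq\alpha_1(S/I)-1$. Substituting this into \eqref{b33} yields $\beta_3^3(S/I)\leq\alpha_3(S/I)=0\leq\binom{n-1}{3}$, settling this case.

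Now suppose $d\geq 3$, so that Lemma \ref{cord} applies with $k=3$. Write the Macaulay expansion
$$\alpha_3(S/I)=\binom{n_3}{3}+\binom{n_2}{2}+\binom{n_1}{1},\qquad n_3>n_2>n_1\geq 1$$
(with the understanding that some lower terms may be absent). Lemma \ref{cord} then gives $\alpha_2(S/I)\geq\binom{n_3}{2}+\binom{n_2}{1}+\binom{n_1}{0}$. Feeding this together with $\alpha_1(S/I)\leq n$ into \eqref{b33}, and grouping the binomials degree by degree, reduces the claim $\beta_3^3(S/I)\leq\binom{n-1}{3}$ to the purely combinatorial inequality
\begin{equation}\label{combin}
\left[\binom{n_3}{3}-\binom{n_3}{2}\right]+\left[\binom{n_2}{2}-\binom{n_2}{1}\right]+\left[\binom{n_1}{1}-\binom{n_1}{0}\right]\leq\binom{n}{3}-\binom{n}{2},
\end{equation}
where I have used $\binom{n-1}{3}=\binom{n}{3}-\binom{n}{2}+(n-1)$ (Pascal's rule) to cancel the common term $n-1$ from both sides.

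Establishing \eqref{combin} is where I expect the real work to lie. The governing observation is the telescoping identity
$$\left[\binom{n}{3}-\binom{n}{2}\right]-\left[\binom{n_3}{3}-\binom{n_3}{2}\right]=\sum_{t=n_3}^{n-1}\left[\binom{t}{2}-\binom{t}{1}\right],$$
together with the fact that $t\mapsto\binom{t}{2}-\binom{t}{1}$ is non-decreasing for $t\geq 1$. Since $n_2<n_3\leq n$, the top summand $\binom{n_3}{2}-\binom{n_3}{1}$ already dominates the middle bracket of \eqref{combin}, while the remaining summands absorb the last bracket $\binom{n_1}{1}-1=n_1-1$; a short case distinction on $n-n_3\in\{0,1,\geq 2\}$ then finishes it. The boundary case $n_3=n$ is special and worth isolating: there $\alpha_3(S/I)=\binom{n}{3}$ is already maximal, so uniqueness of the Macaulay expansion forces a single term, \eqref{combin} becomes an equality, and correspondingly $\beta_3^3(S/I)=\binom{n-1}{3}$, showing the bound is sharp.
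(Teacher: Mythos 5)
Your proof is correct, and it takes a genuinely different route from the paper's. The paper first disposes of the extreme case $\alpha_2(S/I)=\binom{n}{2}$ via Corollary \ref{nun}, then uses Lemma \ref{lem3} to reduce to $\alpha_3(S/I)>\binom{n-1}{3}$, which pins the leading binomial of the expansion of $\alpha_3(S/I)$ to $\binom{n-1}{3}$ and forces $\alpha_1(S/I)=n$; it then checks two explicit quadratic inequalities, one for each remaining shape of the expansion. You instead keep the Macaulay expansion of $\alpha_3(S/I)$ fully general, bound $\alpha_2(S/I)$ from below by the Kruskal--Katona shadow (Lemma \ref{cord}) and $\alpha_1(S/I)$ by $n$, and reduce everything to the single inequality $F(n_3)+G(n_2)+H(n_1)\le F(n)$, where $F(t)=\binom{t}{3}-\binom{t}{2}$, $G(t)=\binom{t}{2}-t$ and $H(t)=t-1$ (your separate treatment of $d=2$ via $\beta_2^2(S/I)\ge 0$ is fine and is exactly the mechanism of Lemma \ref{lem3}). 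Your last step is only sketched, but it does close, and in fact more simply than your announced three-way case distinction: since $n_3\ge 3$ and $G(t)\ge 0$ for $t\ge 3$, your telescoping identity gives $F(n)-F(n_3)\ge G(n_3)$ whenever $n_3\le n-1$, while $G(n_3)-G(n_2)=\sum_{t=n_2+1}^{n_3}(t-2)\ge n_3-2>n_1-1=H(n_1)$ because $n_1\le n_2-1\le n_3-2$; and $n_3=n$ forces the single-term expansion $\alpha_3(S/I)=\binom{n}{3}$, where equality holds. What your approach buys is uniformity --- no preliminary reductions, all expansion shapes handled at once --- and it anticipates exactly the function-plus-monotonicity technique the paper itself deploys later in Lemmas \ref{b3q}, \ref{b45} and \ref{b55}. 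One small caveat: your closing sharpness remark is vacuous under the hypothesis $q=2$, since $\alpha_3(S/I)=\binom{n}{3}$ forces $\alpha_j(S/I)=\binom{n}{j}$ for all $j\le 3$ by Lemma \ref{cord}, hence by \eqref{magic} $\beta_k^3(S/I)=\binom{n-3+k-1}{k}\ge 0$ for all $0\le k\le 3$, so $\hdepth(S/I)\ge 3$; the inequality still holds in that configuration, so including it is harmless, but it does not exhibit a $q=2$ ideal attaining the bound.
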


\begin{proof}
For convenience, we denote $\alpha_j:=\alpha_j(S/I)$ for all $j$.
If $\alpha_2=\binom{n}{2}$, since $\alpha_3\leq \binom{n}{3}$, then 
$$\beta_3^3 = \alpha_3 - \alpha_2 + \alpha_1 - 1 \leq \binom{n}{3}-\binom{n}{2}+n-1 = \binom{n-1}{3},$$
and there is nothing to prove. Hence, we may assume $\alpha_2<\binom{n}{2}$.


Since $\alpha_0=1$, from \eqref{betak} we get
\begin{equation}\label{porc3}
\beta_3^{3}(S/I)= \alpha_3-\alpha_2+\alpha_1-1.
\end{equation}
From Lemma \ref{lem3} it follows that $\beta_3^3(S/I)\leq \alpha_3$.
Hence, if $\alpha_3\leq \binom{n-1}{3}$, then we are done. 

Suppose $\alpha_3>\binom{n-1}{3}$. From Lemma \ref{cord} we get
$\alpha_2 > \binom{n-1}{2}$ and therefore $\alpha_1=n$. We have two cases to consider:
\begin{enumerate}
\item[(i)] $\alpha_3=\binom{n-1}{3}+\binom{n_2}{2}$ with $2\leq n_2\leq n-2$. From
Lemma \ref{cord} we get $\alpha_2\geq \binom{n-1}{2}+n_2$. Thus, from \eqref{porc3} it follows that
$$\beta_3^{3}(S/I)\leq \binom{n-1}{3}+\binom{n_2}{2} - \binom{n-1}{2} - n_2 + n-1.$$
Hence, it is enough to show that 
$$\binom{n_2}{2}-n_2 = \frac{n_2(n_2-3)}{2} \leq \binom{n-1}{2}-(n-1) = \frac{(n-1)(n-4)}{2},$$
which is clear as $n_2\leq n-2$.

\item[(ii)] $\alpha_3=\binom{n-1}{3}+\binom{n_2}{2}+\binom{n_1}{1}$ with $1\leq n_1 < n_2\leq n-2$.
From Lemma \ref{cord} it follows that $\alpha_2\geq \binom{n-1}{2}+n_2+1$. Thus, from \eqref{porc3},
as in the case (i), it suffices to show that
$$\binom{n_2}{2}-n_2+n_1-1 = \frac{n_2(n_2-3)}{2} + n_1-1 \leq \frac{(n-1)(n-4)}{2}.$$
Since $n_1\leq n-3$, in order to prove the above condition, it is enough to show that
$$\frac{n_2(n_2-3)}{2} \leq \frac{(n-3)(n-4)}{2},$$
which is true, since $n_2\leq n-2$ and $(n-2)(n-5)\leq (n-3)(n-4)$ for all $n\geq 3$.
\end{enumerate}
Thus, the proof is complete.
\end{proof}

\begin{lema}\label{lucky}
If $q=3$, then 
$$\beta_3^4(S/I)\leq \binom{n-2}{3}\text{ and }\beta_4^4(S/I)\leq \binom{n-1}{4}.$$
\end{lema}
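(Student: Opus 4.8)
The plan is to turn both inequalities into statements about $\alpha_j:=\alpha_j(S/I)$ via \eqref{betak}, strip them down to relations among $\alpha_2,\alpha_3,\alpha_4$, and then feed in the Kruskal--Katona bounds of Corollary \ref{cord}. From \eqref{betak}, together with $\alpha_0=1$ and $\alpha_1=n$ (the latter being exactly the hypothesis $I\subset\me^2$; see Remark \ref{reducere}), one computes
\begin{align*}
\beta_3^4(S/I)&=\alpha_3-2\alpha_2+3n-4,\\
\beta_4^4(S/I)&=\alpha_4-\alpha_3+\alpha_2-n+1.
\end{align*}
Since $q=3\le d=\dim(S/I)$ we have $\alpha_3>0$, so Corollary \ref{cord} is available in degree $3$; and since $I$ is not principal, Theorem \ref{teo1} rules out $n=4$, so $n\ge 5$. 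These are the only roles of the hypotheses.

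For the bound on $\beta_3^4(S/I)$ the coefficient of $\alpha_2$ is negative, so Kruskal--Katona pushes in the favorable direction. Writing $\alpha_3=\binom{n_3}{3}+\binom{n_2}{2}+\binom{n_1}{1}$ in Macaulay form (some terms possibly absent), Corollary \ref{cord} gives $\alpha_2\ge\binom{n_3}{2}+\binom{n_2}{1}+\binom{n_1}{0}$, hence
$$\beta_3^4(S/I)\le\binom{n_3}{3}+\binom{n_2}{2}+\binom{n_1}{1}-2\Big(\binom{n_3}{2}+n_2+1\Big)+3n-4.$$
It then suffices to check that the right-hand side is at most $\binom{n-2}{3}$ for every admissible expansion with $n\ge n_3>n_2>n_1\ge 1$. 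At $\alpha_3=\binom{n}{3}$ the identity \eqref{magic} gives $\beta_3^4(S/I)=\binom{n-2}{3}$, so the bound is sharp and the extremal case is the full skeleton; the remaining cases are dispatched by the same term-by-term binomial comparisons as in Lemma \ref{lem5}, organized by the number of summands in the expansion.

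For the bound on $\beta_4^4(S/I)$ the coefficient of $\alpha_2$ is positive, and this is the main obstacle: Corollary \ref{cord} only yields lower bounds on the $\alpha_j$, so it cannot control $\alpha_2$ in the direction we need. I would circumvent this with the trivial estimate $\alpha_2\le\binom{n}{2}$ combined with a Kruskal--Katona lower bound on $\alpha_3$. If $\alpha_4=0$ then $\beta_4^4(S/I)\le\alpha_4=0$ by Lemma \ref{lem3}; otherwise $d\ge 4$, and writing $\alpha_4=\binom{n_4}{4}+\binom{n_3}{3}+\binom{n_2}{2}+\binom{n_1}{1}$ we get from Corollary \ref{cord} that $\alpha_3\ge\binom{n_4}{3}+\binom{n_3}{2}+\binom{n_2}{1}+\binom{n_1}{0}$. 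Substituting both estimates into the identity for $\beta_4^4(S/I)$, and using $\binom{n}{2}-n+1=\binom{n-1}{2}$, reduces everything to the single binomial inequality
$$\alpha_4-\Big(\binom{n_4}{3}+\binom{n_3}{2}+\binom{n_2}{1}+\binom{n_1}{0}\Big)+\binom{n-1}{2}\le\binom{n-1}{4}.$$

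I expect the verification of these two binomial inequalities to be the technical heart of the argument. A reassuring structural check is that both are tight at the full skeleton $\alpha_j=\binom{n}{j}$: there \eqref{magic} gives $\beta_4^4(S/I)=\binom{n-1}{4}$, and the reduced inequality collapses to the Pascal identity $\binom{n-1}{3}+\binom{n-1}{2}=\binom{n}{3}$. Away from this extremal configuration the left-hand side should be strictly smaller, and I would prove this by the monotonicity-and-cancellation estimates already used in Lemma \ref{lem4} and Lemma \ref{lem5}, bounding each summand of the Macaulay expansion using $n_4\le n$ (respectively $n_3\le n$) and comparing degree by degree.
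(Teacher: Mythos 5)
Your reduction of the first inequality is too lossy: the universal binomial inequality you reduce it to is false. You keep only $\alpha_0=1$, $\alpha_1=n$, $n\geq 5$, and the Kruskal--Katona shadow bound $\alpha_2\geq\binom{n_3}{2}+\binom{n_2}{1}+\binom{n_1}{0}$, asserting that ``these are the only roles of the hypotheses.'' But take $n=5$ and $\alpha_3=1=\binom{3}{3}$: your right-hand side is $1-2\cdot 3+3\cdot 5-4=6$, while $\binom{n-2}{3}=\binom{3}{3}=1$, so the claimed sufficient inequality fails (it fails for $n=6,7$ as well). Such configurations are excluded only by the hypothesis $\qdepth(S/I)=3$ itself: by Corollary \ref{max} one has $\beta_2^3(S/I)\geq 0$ and $\beta_3^3(S/I)\geq 0$, i.e.\ $\alpha_2\geq 2n-3$ and $\alpha_3\geq\alpha_2-n+1$, and moreover some $\beta_k^4(S/I)$ with $1\leq k\leq 4$ must be \emph{negative}, since otherwise $\qdepth(S/I)\geq 4$. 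The paper's proof runs on exactly these constraints, splitting into cases according to which $\beta_k^4$ is negative; the surviving critical range $5\leq n\leq 8$ is then settled by explicit tables of admissible $(\alpha_2,\max\alpha_3)$ and $(\alpha_3,\max\alpha_4,\min\alpha_2)$ values. This is not ``dispatched by the same term-by-term binomial comparisons as in Lemma \ref{lem5}''; without the $\beta$-constraints the comparisons are simply untrue in the small-$n$ regime where the lemma is delicate.

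The second reduction has the same defect plus a concrete misstep. Pairing the crude bound $\alpha_2\leq\binom{n}{2}$ with only the shadow bound on $\alpha_3$ yields a false inequality: at $n=5$, $\alpha_4=1=\binom{4}{4}$, the left side of your displayed inequality is $1-\binom{4}{3}+\binom{4}{2}=3>1=\binom{4}{4}$ (and $n=6$, $\alpha_4=1$ fails likewise), yet small values of $\alpha_4$ do occur under the hypotheses, as the table in case (iii.a) of the paper's proof shows. You cite Lemma \ref{lem3} but apply it only at $\alpha_4=0$; the needed move, which is the paper's both here and in Theorem \ref{t3}, is to apply $\beta_4^4(S/I)\leq\alpha_4$ whenever $\alpha_4\leq\binom{n-1}{4}$, so that one may assume $\alpha_4\geq\binom{n-1}{4}+1$. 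Then Lemma \ref{cord} forces $\alpha_3>\binom{n-1}{3}$ and $\alpha_2>\binom{n-1}{2}$, and only in that near-extremal situation does replacing $\alpha_2$ by $\binom{n}{2}$ cost nothing. With that reduction inserted, your scheme for $\beta_4^4$ essentially rejoins the paper's argument; as written, both of your reductions discard the hypothesis $\qdepth(S/I)=3$ after the first line, and both break precisely where the proof is hard.
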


\begin{proof}
For convenience, we denote 
$$\beta_k^d=\beta_k^d(S/I)\text{ and }\alpha_k=\alpha_k(S/I), \text{ for all }0\leq k\leq d\leq n.$$
From hypothesis we have $\alpha_0=1$, $\alpha_1=n$ and $n\geq q+2=5$; see Remark \ref{reducere}. It follows that
$$\beta_2^3=\alpha_2-2n+3\geq 0,\;\beta_3^3=\alpha_3-\alpha_2+n-1\geq 0.$$
Therefore, we have that
\begin{equation}\label{coor}
\alpha_2\geq 2n-3\text{ and }\alpha_3\geq \alpha_2-n+1.
\end{equation}
Also, at least one of $\beta_k^4$, with $1\leq k\leq 4$, is negative. Henceforth, we consider the cases:
\begin{enumerate}
\item[(i)] If $\beta_1^4=\alpha_1-4<0$, then $n=\alpha_1 \leq 3$, a contradiction with the fact that $n\geq 5$.

\item[(ii)] If $\beta_2^4=\alpha_2-3\alpha_1+6<0$, then, from \eqref{coor}, it follows that
\begin{equation}\label{coor3}
3n-7 \geq \alpha_2 \geq 2n-3.
\end{equation}
Let $n_2$ such that 
\begin{equation}\label{n2}
\binom{n_2-1}{2}<3n-7\leq \binom{n_2}{2}.
\end{equation}
Since $\alpha_2\leq 3n-7$, from Lemma \ref{cord} it follows that
\begin{equation}\label{coor4}
\alpha_3 \leq \binom{n_2}{3}\text{ and } \alpha_4\leq \binom{n_2}{4}.
\end{equation}
From \eqref{coor3} and \eqref{coor4} and the choice of $n_2$ we get
\begin{equation}\label{coor5}
 \beta_3^4 = \alpha_3 - 2 \alpha_2 + 3 \alpha_1 - 4 \leq \binom{n_2}{3}-n+2.
\end{equation}
Hence, in order to show that $\beta_3^4 \leq \binom{n-2}{3}$, it suffices to prove that \small
\begin{equation}\label{coor6}
\binom{n_2}{3} \leq \binom{n-2}{3} + (n-2). 
\end{equation} \normalsize
Note that $n_2\leq n$, since $3n-7<\binom{n}{2}$.
If $n_2\leq n-2$, then \eqref{coor6} obviously holds and, moreover, from Lemma \ref{lem3} and \eqref{coor4} it follows that
$$\beta_4^4 \leq \alpha_4 \leq \binom{n_2}{4}\leq \binom{n-2}{4},$$
as required. Therefore, we can assume that $n_2\in \{n-1,n\}$. We consider two subcases:
\begin{enumerate}
\item[(i.a)] $n_2=n-1$. From \eqref{coor4} and Lemma $3.3$ we have
$$\beta_4^4 \leq \alpha_4 \leq \binom{n-1}{4},$$
as required. From \eqref{n2} it follows that
$$n^2-5n+8 \leq 6n-14 \leq n^2-3n+2.$$
An easy calculation shows that these inequalities hold only for $n\in \{7,8\}$.
\begin{itemize}
\item If $n=7$, then $n_2=6$. Also, from \eqref{coor3} we have $11\leq \alpha_2\leq 14$.
      Using Lemma \ref{cord} we deduce Table 1:
			\begin{table}[htb]
      \centering
      \caption{}
      \begin{tabular}{|l|l|l|l|l|l|l|}
      \hline
      $\alpha_2$       & 11 & 12 & 13 & 14 \\ \hline
      $\max(\alpha_3)$ & 10 & 11 & 13 & 16 \\ \hline
			\end{tabular}
      \end{table}
			
			From the table, we deduce that $\max(\alpha_3-2\alpha_2)=-12$. It follows that
      $$\beta_3^4\leq -12 + 21 - 4 = 5 \leq \binom{5}{3}.$$			
\item If $n=8$, then $n_2=7$. Also, from \eqref{coor3} we have $13\leq \alpha_2\leq 17$.
      Using Lemma \ref{cord} we deduce Table 2:
      \begin{table}[htb]
      \centering
      \caption{}
      \begin{tabular}{|l|l|l|l|l|l|l|l|}
      \hline
      $\alpha_2$       & 13 & 14 & 15 & 16 & 17 \\ \hline
      $\max(\alpha_3)$ & 13 & 16 & 20 & 20 & 21 \\ \hline
      \end{tabular}
      \end{table}
			
      From the table, we deduce that $\max(\alpha_3-2\alpha_2)=-10$. It follows that
      $$\beta_3^4\leq -10 + 24 - 4 = 10 \leq \binom{6}{3}.$$					
\end{itemize}
\item[(i.b)] $n_2=n$. From \eqref{n2} it follows that
$$n^2-3n \leq 6n-14 \leq n^2-n.$$
These conditions hold if and only if $n\in \{2,3,4,5,6,7\}$. Since $n\geq 5$, we have in fact
$n\in \{5,6,7\}$.
\begin{itemize}
\item $n=5$. From \eqref{coor3} it follows that $7\leq \alpha_2\leq 8$. Lemma \ref{cord} implies that
     $$\alpha_2=7\Rightarrow \alpha_3\leq 4\text{ and }\alpha_2=8\Rightarrow \alpha_3\leq 5.$$
		 It follows immediately that $$\beta_3^4=\alpha_3-2\alpha_2+3\alpha_1-4=\alpha_3-2\alpha_2+11\leq 1=\binom{5-2}{3},$$
		 as required. On the other hand, from Lemma \ref{lem3} we have $\beta_4^4\leq \alpha_4$. 
		 If $\alpha_4\geq 2$, then $\alpha_3\geq 7$, a contradiction as $\alpha_3\leq 5$. Thus $\beta_4^4\leq 1=\binom{5-1}{4}$,
		 as required.
\item $n=6$. From \eqref{coor3} it follows that $9\leq \alpha_2\leq 11$. As in the subcase (i.a), we deduce
     that $\max\{\alpha_3-2\alpha_2\}=-10$. Since $\alpha_1=6$, we have
		 $$\beta_3^4\leq -10+18-4 = 4 = \binom{6-2}{3},$$
		 as required. On the other hand, since $\alpha_3\leq 10=\binom{5}{3}$, from Lemma \ref{cord} it follows that $\alpha_4\leq \binom{5}{4}=5$.
		 Thus, from Lemma \ref{lem3} it follows that $$\beta_4^4\leq\alpha_4\leq 5 =\binom{6-1}{4}.$$
\item $n=7$. From \eqref{coor3} it follows that $11\leq \alpha_2\leq 14$. As in the subcase (i.a), we deduce
     that $\max\{\alpha_3-2\alpha_2\}=-12$. Since $\alpha_1=7$, we have
		 $$\beta_3^4\leq -12+21-4 = 5 \leq \binom{7-2}{3},$$
		 as required. On the other hand, since $\alpha_3\leq 16=\binom{5}{3}+\binom{4}{2}$, Lemma \ref{cord} implies
		 $\alpha_4\leq \binom{5}{4}+\binom{4}{3}=9$.
		 Thus, from Lemma \ref{lem3} it follows that $$\beta_4^4\leq\alpha_4\leq 9 < 15=\binom{7-1}{4}.$$
\end{itemize}
\end{enumerate}

\item[(iii)] If $\beta_3^4<0$, then, in particular, $\beta_3^4\leq \binom{n-2}{3}$. If $\alpha_4\leq \binom{n-1}{4}$, then, from Lemma \ref{lem3},
             it follows that $\beta_4^4\leq \binom{n-1}{4}$ and there is nothing to prove. Hence, we
		         may assume that $\alpha_4 \geq \binom{n-1}{4}+\binom{3}{3}$. From Lemma \ref{cord} it follows that:
		         \begin{equation}\label{coco2}
		         \alpha_3 \geq \binom{n-1}{3} + \binom{3}{2}\text{ and }\alpha_2\geq \binom{n-1}{2}+ \binom{3}{1}.
		         \end{equation}
             Since $\beta^4_2,\beta^3_3\geq 0$, it follows that
		         \begin{equation}\label{coco1}
		         \alpha_2\geq 3n-6\text{ and }\alpha_2-n+1 \leq \alpha_3 \leq 2\alpha_2-3n+3.
             \end{equation}
             From the fact that $n\geq 5$, \eqref{coco2} and \eqref{coco1} it follows that
		         \begin{equation}\label{coco3}
		         \binom{n-1}{3}+3 \leq \alpha_3 \leq 2\alpha_2-3n+3 \leq 2\binom{n}{2}-3n+3=(n-1)(n-3),
		         \end{equation}
		         from which we deduce that $5\leq n\leq 7$. We have the subcases:
		\begin{enumerate}

\item[(iii.a)] $n=5$. From \eqref{coco3} it follows that $7\leq \alpha_3 \leq 8$. Since $\alpha_3 \leq 8 = \binom{4}{3}+\binom{3}{2}+\binom{1}{1}$,
                      from Lemma \ref{cord} it follows that $\alpha_4\leq 2$. Hence
                      $$\beta^4_4 = \alpha_4-\alpha_3+\alpha_2 - 4 \leq 2-7+10-4= 1=\binom{5-1}{4}.$$
											
\item[(iii.b)] $n=6$. From \eqref{coco3} it follows that $13 \leq \alpha_3 \leq 15$.
               From Lemma \ref{cord}, we deduce the following:

\begin{table}[htb]
\centering
\caption{}
\begin{tabular}{|l|l|l|l|l|l|}
\hline
$\alpha_3$       &  13 &  14 & 15 \\ \hline
$\max(\alpha_4)$ &   6  & 6 &  7 \\ \hline
\end{tabular}
\end{table}
From the above table, and the fact that $\alpha_2\leq \binom{6}{2}=15$, it follows that 
$$\beta^4_4 = \alpha_4-\alpha_3+\alpha_2 - 5 \leq -7+15-5 = 3 < 5 = \binom{6-1}{4}.$$

\item[(iii.c)] $n=7$. From \eqref{coco3} it follows that $23 \leq \alpha_3 \leq 24$.
               From Lemma \ref{cord}, we deduce the following:

\begin{table}[htb]
\centering
\caption{}
\begin{tabular}{|l|l|l|l|l|}
\hline
$\alpha_3$       &  23 &  24 \\ \hline
$\max(\alpha_4)$ & 16  & 17  \\ \hline
\end{tabular}
\end{table}

From the above table, and the fact that $\alpha_2\leq \binom{7}{2}=21$, it follows that 
$$\beta^4_4 = \alpha_4-\alpha_3+\alpha_2 - 6 \leq -7+21-6=8 \leq 15=\binom{7-1}{4}.$$
\end{enumerate}

\item[(iv)] If $\beta_4^4<0$, then, in particular, $\beta_4^4\leq \binom{n-1}{4}$. Also, since $\beta_4^4=\alpha_4-\beta_3^3$,
     we have 
		\begin{equation}\label{cuc1}
		1\leq \alpha_4+1\leq \beta_3^3\leq \alpha_3\leq \binom{n}{3}\text{ and thus }\alpha_3\geq\alpha_2-n+2.
		\end{equation}
    In order to avoid the previous cases,
     we can assume that $\beta^4_2,\beta^4_3\geq 0$ and therefore
		 \begin{equation}\label{cuc2}
		 \alpha_2\geq 3n-6,\;\alpha_3\geq 2\alpha_2-3n+4.
		 \end{equation}
     From \eqref{cuc2} it follows that
		\begin{equation}\label{cuc3}
		\beta_3^4 = \alpha_3 -2\alpha_2+3n-4\leq \alpha_3-2(3n-6)+3n-4=\alpha_3-3n+8.
		\end{equation}
		
		If $\alpha_3\leq \binom{n-2}{3}+3n-8$, then there is nothing to prove. Assume that 
		\begin{equation}\label{alfa3}
		\alpha_3\geq \binom{n-2}{3}+3n-7.
		\end{equation}
		If $n=5$, then \eqref{alfa3} implies $\alpha_3\geq 9 = \binom{4}{3}+\binom{3}{2}+\binom{2}{1}$ and, therefore, by Lemma \ref{cord},
		we have $\alpha_2 \geq 10$. It follows that
		$$\beta_3^4 = \alpha_3 - 2\alpha_2+3\cdot 5-4 \leq \alpha_3 - 9 \leq \binom{5}{3}-9=1 = \binom{5-2}{3},$$
		as required. Therefore, we may assume $n\geq 6$.
					
		
		
		In order to prove that $\beta_3^4\leq \binom{n-2}{3}$ it is enough to prove that
		\begin{equation}\label{dorinta1}
				\alpha_3-2\alpha_2 \leq \binom{n-2}{3}-3n+4 = \frac{n(n-1)(n-8)}{6}.
		\end{equation}		
		We consider two subcases:
		\begin{enumerate}
		\item[(iv.a)] $\alpha_3>\binom{n-1}{3}$. From Lemma \ref{cord} it follows that $\alpha_2>\binom{n-1}{2}$.
		              If $\alpha_3=\binom{n}{3}$, then $\alpha_2=\binom{n}{2}$ and
		              therefore $\alpha_3-2\alpha_2 = \frac{n(n-1)(n-8)}{6}$, hence \eqref{dorinta1} holds.
									
									If $\alpha_2=\binom{n-1}{2}+n_1$ for some $1\leq n_1\leq n-2$, then $\alpha_3\leq \binom{n-1}{3}+\binom{n_1}{2}$. Thus
		              $$\alpha_3-2\alpha_2 \leq \binom{n-1}{3}-2\binom{n-1}{2}+\binom{n_1}{2}-2n_1=$$
									\begin{equation}\label{aoleu}
		              =\frac{(n-1)(n-2)(n-9)}{6}+\frac{n_1(n_1-5)}{2}.
		              \end{equation}
		              Since $n\geq n_1+2$, it is clear that $\frac{n_1(n_1-5)}{2}\leq \frac{(n-2)(n-7)}{2}$. Thus, \eqref{aoleu} implies
		              $$\alpha_3-2\alpha_2 \leq \frac{(n-1)(n-2)(n-9)}{6}+\frac{3(n-2)(n-7)}{6}=$$
									\begin{equation}\label{vaivaleu}
		              =\frac{(n-2)(n^2-7n-12)}{6}.
		              \end{equation}
		              Since $n(n-1)(n-8)-(n-2)(n^2-7n-12)=6(n-4)>0$, from \eqref{vaivaleu} it follows that \eqref{dorinta1} is satisfied.
									
    \item[(iv.b)] $\alpha_3\leq \binom{n-1}{3}$. From \eqref{alfa3} it follows that 
		              $$\binom{n-1}{3}-\binom{n-2}{3}-3n+7\geq 0 \Leftrightarrow n^2-11n+20\geq 0.$$
									Since $n\geq 5$, the above inequality implies	$n\geq 9$. 
									
									From \eqref{alfa3} we deduce that																
									$$\alpha_3\geq \binom{n-2}{3}+3\cdot 9 - 7 = \binom{n-2}{3} + \binom{6}{2} + \binom{5}{1}.$$
									From \eqref{cord} it follows that $\alpha_2\geq \binom{n-2}{2}+7$. Therefore, we get
									$$\alpha_3-2\alpha_2 \leq \binom{n-1}{3} - (n-2)(n-3) - 14.$$
									It is easy to check that
									$$\binom{n-1}{3} < (n-2)(n-3)+14+\frac{n(n-1)(n-8)}{6}.$$
									Thus \eqref{dorinta1} holds.
		\end{enumerate}
\end{enumerate}
\end{proof}

\begin{teor}\label{teo2}
Let $(0)\neq I\subset S$ be a proper squarefree monomial ideal with $q=\qdepth(S/I)\leq 3$.
Then $$\qdepth(I)\geq \qdepth(S/I)+1.$$
\end{teor}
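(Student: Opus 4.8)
The plan is to translate the target inequality into the language of the $\beta$-numbers and then dispatch it degree by degree using the estimates already assembled. By Lemma \ref{lem2}, the assertion $\qdepth(I)\geq \qdepth(S/I)+1$ is equivalent to
$$\beta_{k+1}^{q+1}(S/I)\leq \binom{n-q+k-1}{k+1}\text{ for all }0\leq k\leq q,$$
or, writing $j=k+1$, to the bounds $\beta_j^{q+1}(S/I)\leq \binom{n-q+j-2}{j}$ for $1\leq j\leq q+1$. Since $q\leq 3$, only the values $j\in\{1,2,3,4\}$ can occur, so it suffices to verify a short finite list of inequalities.

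First I would handle $j=1$ uniformly: from \eqref{betak} and $\alpha_0(S/I)=1$ one computes $\beta_1^{q+1}(S/I)=\alpha_1(S/I)-(q+1)$, and since there are only $n$ squarefree monomials of degree one we have $\alpha_1(S/I)\leq n$, giving $\beta_1^{q+1}(S/I)\leq n-q-1=\binom{n-q-1}{1}$. For $j=2$, which needs $q\geq 1$, the required bound $\beta_2^{q+1}(S/I)\leq\binom{n-q}{2}$ is exactly Lemma \ref{lem4}, and for $j=3$ with $q=2$ the bound $\beta_3^3(S/I)\leq\binom{n-1}{3}$ is Lemma \ref{lem5}. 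These three inputs dispose of the cases $q\in\{0,1,2\}$ outright, with no extra hypotheses on $I$.

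The remaining and genuinely delicate case is $q=3$, where I must additionally bound $\beta_3^4(S/I)\leq\binom{n-2}{3}$ and $\beta_4^4(S/I)\leq\binom{n-1}{4}$. If $I$ is principal, Theorem \ref{teo1} already gives $\qdepth(I)=n=\qdepth(S/I)+1$, so I may assume $I$ is not principal. When $I\subset\me^2$, these two bounds are precisely the conclusion of Lemma \ref{lucky}, whose hypotheses ($I\subset\me^2$, $I$ not principal) are then met, and Lemma \ref{lem2} finishes this case. When $I\not\subset\me^2$, I would reduce to the previous situation via Remark \ref{reducere}: writing $I=(I',x_{m+1},\ldots,x_n)$ with $I'\subset\me'^2\subset S'=K[x_1,\ldots,x_m]$, one has $S/I\cong S'/I'$, hence $\qdepth(S'/I')=q$, and Lemma \ref{patrat} shows it suffices to check $\qdepth(I')\geq\qdepth(S'/I')$. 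Applying the $I'\subset\me'^2$ case just established (or Theorem \ref{teo1} should $I'$ itself be principal) yields $\qdepth(I')\geq\qdepth(S'/I')+1$, which is more than enough.

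The step I expect to be the crux is the $q=3$ case, and within it the verification that the reduction of Remark \ref{reducere} genuinely upgrades the weaker conclusion $\qdepth(I')\geq\qdepth(S'/I')$ back to the sharp $+1$ bound for the original $I$; the heavy combinatorial lifting, via the Kruskal--Katona estimates of Theorem \ref{krusk}, is already packaged inside Lemmas \ref{lem4}, \ref{lem5} and especially \ref{lucky}, so once the hypotheses of Lemma \ref{lucky} are in place the remaining assembly through Lemma \ref{lem2} is immediate.
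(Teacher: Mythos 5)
Your proposal is correct and follows essentially the same route as the paper's proof: translate the claim via Lemma \ref{lem2}, settle $k=0$ directly from $\alpha_1(S/I)\leq n$, and invoke Lemma \ref{lem4}, Lemma \ref{lem5} and Lemma \ref{lucky} for the cases $q=1,2,3$, with Theorem \ref{teo1} and Remark \ref{reducere} supplying the reduction to non-principal ideals contained in $\me^2$. The only difference is presentational: you make explicit the Lemma \ref{patrat} mechanics behind the reduction when $I\not\subset\me^2$ (which the paper compresses into one appeal to Remark \ref{reducere}) and you note, correctly, that Lemmas \ref{lem4} and \ref{lem5} need no such hypotheses, so the reduction is only required at $q=3$.
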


\begin{proof}
If $I$ is principal, then, according to Theorem \ref{teo1}, there is nothing to prove.
Also, using the argument from Remark \ref{reducere}, we can assume that $I\subset \me^2$. (We need this
assumption in order to apply several of the previous lemmas.)

If $q=0$, then there is nothing to prove, hence we may assume $q\geq 1$. From Lemma \ref{lem3}, it
is enough to show that 
\begin{equation}\label{desire}
\beta^{q+1}_{k+1}(S/I)\leq \binom{n-q+k-1}{k+1}\text{ for all }0\leq k\leq q.
\end{equation}
Since $\alpha_0(S/I)=1$, we have that:
$$\beta^{q+1}_1(S/I)=\alpha_1(S/I)-(q+1)\alpha_0(S/I)\leq n-q-1 = \binom{n-q+0-1}{1},$$
and thus \eqref{desire} holds for $k=0$. Also, from Lemma \ref{lem4} it follows that \eqref{desire} holds for $k=1$.
In particular, the case $q=1$ is proved.

Similarly, since \eqref{desire} holds for $k\in\{0,1\}$, 
the case $q=2$ follows from Lemma \ref{lem5} and the case $q=3$ follows from Lemma \ref{lucky}.
\end{proof}

\begin{cor}\label{cor2}
Let $I\subset S=K[x_1,\ldots,x_n]$ be a squarefree monomial ideal. If $n\leq 5$, then 
$$\qdepth(I)\geq \qdepth(S/I)+1.$$
\end{cor}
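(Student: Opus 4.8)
The plan is to reduce everything to the two theorems already proved and to split into cases according to the value of $q=\qdepth(S/I)$. By Theorem \ref{t11} we have the a priori bound $q\leq \dim(S/I)\leq n-1$, so the hypothesis $n\leq 5$ forces $q\leq 4$. This is the key observation: only finitely many values of $q$ can occur, and each of them has already been treated.

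First I would dispose of the range $0\leq q\leq 3$. For any squarefree monomial ideal with $\qdepth(S/I)\leq 3$, Theorem \ref{teo2} gives directly $\qdepth(I)\geq \qdepth(S/I)+1$, with no restriction on $n$. So in the cases $q\in\{0,1,2,3\}$ there is nothing more to do.

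The only remaining value is $q=4$. Here the bound $q\leq n-1\leq 4$ forces the chain of inequalities to be equalities, i.e.\ $n=5$ and $q=n-1$. Thus $\qdepth(S/I)=n-1$, and by Theorem \ref{teo1} this is equivalent to $I$ being principal, which in turn gives $\qdepth(I)=n$. Consequently
\begin{equation*}
\qdepth(I)=n=5=q+1=\qdepth(S/I)+1,
\end{equation*}
as required.

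I do not expect any genuine obstacle here: the statement is a packaging of Theorem \ref{teo2} (covering $q\leq 3$) together with the extremal characterization in Theorem \ref{teo1} (covering the forced boundary case $q=n-1$). The one point to be careful about is that the case $q=4$ is not an ``interior'' case to be handled by the $\beta_k^{q+1}$ estimates of the previous lemmas, but rather the degenerate case $q=n-1$, where the Hilbert depth of $S/I$ is as large as possible and the Kruskal--Katona machinery collapses to the principal-ideal classification of Lemma \ref{liema}.
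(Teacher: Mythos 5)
Your proof is correct and follows essentially the same route as the paper: dispose of $q\leq 3$ via Theorem \ref{teo2}, and handle the remaining case $q=4$ by observing that $q\leq n-1$ forces $n=5$ and $q=n-1$, whence $I$ is principal and $\qdepth(I)=n$ by Theorem \ref{teo1}. The only cosmetic difference is that the paper cases on $n$ first ($n\leq 4$ versus $n=5$) while you case on $q$, but the substance is identical.
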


\begin{proof}
Let $q=\qdepth(S/I)$. If $n\leq 4$, then $q\leq 3$ and the conclusion follows from Theorem \ref{teo2}.
If $n=5$, then $q\leq 4$. If $q=4$, then, according to Theorem \ref{teo1}, $I$ is principal and, moreover,
$\qdepth(I)=5$. Also, if $q\leq 3$, then we are done by Theorem \ref{teo2}.
\end{proof}

Let $I\subset S$ be a squarefree monomial ideal with $\qdepth(S/I)\geq 4$. 
If $S/I$ is not Cohen-Macaulay, then the inequality 
$\qdepth(I)\geq \qdepth(S/I)+1$ does not necessarily hold, as the following example shows:

\begin{exm}\label{patru}\rm
We consider the ideal
$$I=(x_1x_2,x_1x_3,x_1x_4,x_1x_5x_6)\subset S=K[x_1,x_2,\ldots,x_6].$$
By straightforward computations, we get
$$\alpha(S/I)=(1,6,12,10,5,1,0)\text{ and }\alpha(I)=(0,0,3,10,10,5,1).$$
Also, from \eqref{betak}, we get 
$$\beta^4(S/I)=(1,2,0,0,2),\;\beta^5_2(S/I)=-2<0,\;\beta^4(I)=(0,0,3,4,3)\text{ and }\beta_4^5(I)=-1.$$
Hence, $\qdepth(S/I)=\qdepth(I)=4$. 
\end{exm}

Similarly to Lemma \ref{lem2}, we have the following:

\begin{lema}\label{lem22}
Let $I\subset S$ be a proper squarefree monomial ideal with \linebreak $\qdepth(S/I)=q$.
The following are equivalent:
\begin{enumerate}
\item[(1)] $\qdepth(I)\geq \qdepth(S/I).$
\item[(2)] $\beta_{k}^{q}(S/I)\leq \binom{n-q+k-1}{k},\text{ for all }1\leq k\leq q$.
\end{enumerate}
\end{lema}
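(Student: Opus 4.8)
The plan is to mirror the proof of Lemma \ref{lem2} almost verbatim, replacing $q+1$ by $q$ throughout and observing that the index $k=0$ now carries the role of the trivial term. The starting point is the characterization of Hilbert depth coming from Theorem \ref{d1}: combined with Corollary \ref{max}, it gives that $\qdepth(I)\geq q$ holds if and only if $\beta_k^q(I)\geq 0$ for all $0\leq k\leq q$. Indeed, if $\qdepth(I)\geq q$ then some $q'\geq q$ lies in the defining set, and Corollary \ref{max} forces $\beta_k^q(I)\geq 0$ for $0\leq k\leq q$; conversely, if all these are nonnegative then $q$ itself belongs to the set, so the maximum is at least $q$. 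Thus condition (1) is equivalent to the nonnegativity of $\beta_0^q(I),\ldots,\beta_q^q(I)$.

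First I would dispose of the index $k=0$. Since $I$ is proper we have $1\notin I$, so $\beta_0^q(I)=\alpha_0(I)=0\geq 0$; equivalently, Lemma \ref{lem1} gives $\beta_0^q(I)=\binom{n-q-1}{0}-\beta_0^q(S/I)=1-1=0$. Hence the $k=0$ inequality is automatic and carries no information, and $\qdepth(I)\geq q$ is equivalent to $\beta_k^q(I)\geq 0$ for all $1\leq k\leq q$. The remaining step is to translate these inequalities into statements about $S/I$: for each $1\leq k\leq q$, Lemma \ref{lem1} yields
\[
\beta_k^q(I)=\binom{n-q+k-1}{k}-\beta_k^q(S/I),
\]
so $\beta_k^q(I)\geq 0$ holds precisely when $\beta_k^q(S/I)\leq \binom{n-q+k-1}{k}$. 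Ranging over $1\leq k\leq q$ and combining with the previous observation gives the equivalence of (1) and (2).

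I do not anticipate any genuine obstacle, as the argument is purely formal: it relies only on the definition of Hilbert depth and on the binomial identity already packaged into Lemma \ref{lem1}. The one point deserving care is the justification that $\qdepth(I)\geq q$ reduces to the finitely many inequalities $\beta_k^q(I)\geq 0$ with $0\leq k\leq q$ — which is exactly the content of Theorem \ref{d1} together with Corollary \ref{max} — and the verification that the $k=0$ term drops out because $I$ is proper. Everything else is a direct substitution.
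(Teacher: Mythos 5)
Your proof is correct and follows essentially the same route as the paper, which proves Lemma \ref{lem22} exactly as Lemma \ref{lem2}: dispose of the $k=0$ term (which vanishes since $I$ is proper) and apply Lemma \ref{lem1} to translate $\beta_k^q(I)\geq 0$ into $\beta_k^q(S/I)\leq \binom{n-q+k-1}{k}$ for $1\leq k\leq q$. Your explicit justification, via Theorem \ref{d1} and Corollary \ref{max}, that $\qdepth(I)\geq q$ is equivalent to the finitely many inequalities $\beta_k^q(I)\geq 0$, $0\leq k\leq q$, is exactly the step the paper leaves implicit, and your computation $\beta_0^q(I)=0$ is in fact more accurate than the value $1$ asserted in the proof of Lemma \ref{lem2}.
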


\begin{prop}\label{cook}
For any squarefree monomial ideal $I\subset S$, condition (2) from Lemma \ref{lem22} holds for $k\in\{1,2\}$.
\end{prop}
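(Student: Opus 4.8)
The plan is to establish the two inequalities
$$\beta_1^q(S/I)\leq \binom{n-q}{1}\quad\text{and}\quad \beta_2^q(S/I)\leq \binom{n-q+1}{2},$$
corresponding to $k=1$ and $k=2$ in condition (2) of Lemma \ref{lem22}. Both follow quickly from material already in place. For $k=1$ I would expand $\beta_1^q(S/I)$ directly from \eqref{betak}: the only surviving terms are $j=0$ and $j=1$, and since $\alpha_0(S/I)=1$, one gets $\beta_1^q(S/I)=\alpha_1(S/I)-q$. The key observation is the trivial bound $\alpha_1(S/I)\leq n$, valid because there are exactly $n$ squarefree monomials of degree $1$ in $S$. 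Hence $\beta_1^q(S/I)\leq n-q=\binom{n-q}{1}$, which is the desired inequality for $k=1$ (this case being relevant only when $q\geq 1$).

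For $k=2$ the main idea is to recognize that Lemma \ref{lem4} already contains exactly what is needed, after a shift of the parameter. Indeed, $\beta_2^q(S/I)=\beta_2^{(q-1)+1}(S/I)$, and since the case $k=2$ is only relevant when $q\geq 2$, the integer $q-1$ is at least $1$. Applying Lemma \ref{lem4} with $q-1$ in place of $q$ therefore yields
$$\beta_2^q(S/I)=\beta_2^{(q-1)+1}(S/I)\leq \binom{n-(q-1)}{2}=\binom{n-q+1}{2},$$
which is precisely condition (2) for $k=2$.

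There is no serious obstacle here: the arithmetic for $k=1$ is immediate, and the $k=2$ case is merely a reindexing of Lemma \ref{lem4}. The only point requiring a moment's care is to align the indices correctly, namely to notice that the bound $\binom{n-q+k-1}{k}$ at $k=2$ becomes $\binom{n-q+1}{2}$, matching the output $\binom{n-(q-1)}{2}$ of Lemma \ref{lem4} under the substitution $q\mapsto q-1$. All the genuine combinatorial work, which rests on the Kruskal--Katona estimates of Lemma \ref{cord}, has already been carried out in the proof of Lemma \ref{lem4}, so the present statement is essentially a bookkeeping consequence.
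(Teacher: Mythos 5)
Your proposal is correct and matches the paper's proof essentially verbatim: the paper likewise computes $\beta_1^q(S/I)=\alpha_1(S/I)-q\leq n-q$ for $k=1$, and for $k=2$ applies Lemma \ref{lem4} with $q-1$ in place of $q$ to obtain $\beta_2^q(S/I)\leq \binom{n-q+1}{2}$. Your extra remarks on index alignment and on $q-1\geq 1$ are sound but add nothing beyond the paper's argument.
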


\begin{proof}
If $q\geq 1$, then $\beta_1^q(S/I)=\alpha_1(S/I)-q\leq n-q$, as required. If $q\geq 2$, then, according to
Lemma \ref{lem4}, applied for $q-1$, it follows that $\beta_2^q(S/I)\leq \binom{n-q+1}{2}$, as required.
\end{proof}

As Example \ref{minus} shows, condition (2) of Lemma \ref{lem22} does not hold in general for $k=3$.

\begin{teor}\label{t3}
If $I\subset S$ is a squarefree monomial with $\qdepth(S/I)=4$, then 
$$\qdepth(I)\geq 4.$$
\end{teor}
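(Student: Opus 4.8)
The plan combines the existing reduction machinery with a Kruskal--Katona optimization mirroring Lemma \ref{lucky}, arguing by induction on $n$. If $I$ is principal, then $\qdepth(S/I)=4$ forces $n=5$ and $\qdepth(I)=5$ by Theorem \ref{teo1}; so assume $I$ is not principal, whence $n\ge 6$. If $I\not\subset\me^2$, I strip the linear generators to write $I=(I',x_{m+1},\ldots,x_n)$ with $I'\subset\me'^2\subset S'=K[x_1,\ldots,x_m]$ and $m<n$; since removing linear generators changes no $\alpha_k$, we have $\qdepth(S'/I')=\qdepth(S/I)=4$, so the induction hypothesis gives $\qdepth(I')\ge 4$ and Lemma \ref{patrat} yields $\qdepth(I)\ge\qdepth(S/I)+1\ge 4$. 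Thus I may assume $I\subset\me^2$, so $\alpha_0(S/I)=1$ and $\alpha_1(S/I)=n$. By Lemma \ref{lem22} it suffices to check $\beta_k^4(S/I)\le\binom{n-4+k-1}{k}$ for $1\le k\le 4$; the cases $k\in\{1,2\}$ are Proposition \ref{cook}, so everything reduces to proving $\beta_3^4(S/I)\le\binom{n-2}{3}$ and $\beta_4^4(S/I)\le\binom{n-1}{4}$.

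These are word-for-word the conclusions of Lemma \ref{lucky}; the novelty is purely in the hypothesis. Lemma \ref{lucky} assumes $\qdepth(S/I)=3$, so that \emph{some} $\beta_k^4(S/I)<0$, which drives its case split; here $\qdepth(S/I)=4$ gives the opposite, namely that \emph{all} $\beta_k^4(S/I)\ge 0$, so the present proof treats precisely the regime Lemma \ref{lucky} excluded. Writing $\alpha_k=\alpha_k(S/I)$, the nonnegativity of $\beta_2^4,\beta_3^4,\beta_4^4$ furnishes the strong lower bounds $\alpha_2\ge 3n-6$, $\alpha_3\ge 2\alpha_2-3n+4$ and $\alpha_4\ge\alpha_3-\alpha_2+n-1$, while maximality of $\qdepth$ supplies a negative $\beta_k^5(S/I)$ with $k\in\{2,3,4,5\}$, since $\beta_1^5=n-5>0$ for $n\ge 6$.

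For the first inequality I would bound $\alpha_3\le\alpha_2^{(2)}$ by Lemma \ref{cord} and reduce $\beta_3^4=\alpha_3-2\alpha_2+3n-4$ to the one-variable estimate $\alpha_2^{(2)}-2\alpha_2\le\binom{n}{3}-2\binom{n}{2}=\binom{n-2}{3}-3n+4$; the right-hand side is attained at $\alpha_2=\binom{n}{2}$, and the constraint $\alpha_2\ge 3n-6$ keeps the leading index of the Kruskal--Katona expansion of $\alpha_2$ large enough that the left-hand side is maximized at that endpoint. For the second inequality I would use the Pascal-type identity $\beta_k^{q+1}(S/I)=\beta_k^q(S/I)-\beta_{k-1}^q(S/I)$, which follows at once from \eqref{betak}: since $\beta_4^5=\beta_4^4-\beta_3^4$, whenever $\beta_4^5(S/I)<0$ one gets $\beta_4^4\le\beta_3^4\le\binom{n-2}{3}\le\binom{n-1}{4}$ for free (the last step holds for $n\ge 5$). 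In the complementary case $\beta_4^5(S/I)\ge 0$ I combine $\beta_4^4\le\alpha_4$ from Lemma \ref{lem3} with the Kruskal--Katona cascade $\alpha_4>\binom{n-1}{4}\Rightarrow\alpha_3>\binom{n-1}{3}\Rightarrow\alpha_2>\binom{n-1}{2}$, which confines $(\alpha_2,\alpha_3,\alpha_4)$ to a narrow near-extremal window from which $\beta_4^4=\alpha_4-\alpha_3+\alpha_2-n+1\le\binom{n-1}{4}$ can be read off.

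The main obstacle will be the small-$n$ boundary cases. Exactly as in the proof of Lemma \ref{lucky}, the asymptotic binomial inequalities above degenerate or reverse for the first few values of $n$ (I expect $n\in\{6,7,8,9\}$), the Kruskal--Katona expansions then carry several summands, and the monotonicity used to push the optimum to the extremal endpoint fails. These cases have to be settled by hand through explicit tables listing $\max\alpha_3$ given $\alpha_2$ and $\max\alpha_4$ given $\alpha_3$, together with the accompanying lower bounds on $\alpha_2$; this finite bookkeeping is where the real work lies.
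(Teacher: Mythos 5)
Your proposal is correct and takes essentially the same route as the paper's own proof: the same reduction to a non-principal $I\subset\me^2$ with $n\ge 6$ (Remark \ref{reducere} plus Lemma \ref{patrat}), the same appeal to Lemma \ref{lem22} and Proposition \ref{cook}, and the same two Kruskal--Katona targets $\beta_3^4(S/I)\le\binom{n-2}{3}$ and $\beta_4^4(S/I)\le\binom{n-1}{4}$ --- the paper obtains the first by observing that case (iv) of Lemma \ref{lucky} never uses its hypothesis $\beta_4^4<0$ (your one-variable optimization over $\alpha_2$, with the true identity $\binom{n}{3}-2\binom{n}{2}=\binom{n-2}{3}-3n+4$, is the same computation), and the second by reducing, as you sketch, to $\alpha_4-\alpha_3\le\binom{n}{4}-\binom{n}{3}$ via $\alpha_2\le\binom{n}{2}$ and then running Kruskal--Katona on the expansion $\alpha_3=\binom{n-1}{3}+\binom{n_2}{2}+\cdots$. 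Two remarks: your Pascal-type identity $\beta_k^{q+1}=\beta_k^q-\beta_{k-1}^q$ is valid and the resulting shortcut for the sub-case $\beta_4^5<0$ is a genuine (small) simplification the paper does not use; but in the complementary case the bound cannot literally be ``read off'' from the window $\alpha_j>\binom{n-1}{j}$, since crude bounds there only give $\beta_4^4\le\binom{n-1}{4}+\binom{n}{2}-n$, so the coupling $\alpha_4\le\alpha_3^{(3)}$ and the ensuing case analysis (your ``bookkeeping'', the paper's cases (i)--(ii), which in fact go through for all $n\ge 6$ by polynomial estimates rather than small-$n$ tables) is exactly where the bulk of the paper's proof lives.
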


\begin{proof}
An in the proof of Theorem \ref{teo2}, we can assume that $I$ is not principal and $I\subseteq \me^2$. In particular, $n\geq 4+2=6$.
For convenience, we denote $\alpha_k=\alpha_k(S/I)$ and $\beta_k^s=\beta_k^s(S/I)$ for all $0\leq k\leq s\leq n$.

Since in the proof of the Case 4 of Lemma \ref{lucky}
we have $\beta_1^4,\beta_2^4,\beta_3^4\geq 0$ and we don't use any assumption on $\alpha_4$ and on
$\beta_4^4$, we can apply the same arguments in order to conclude that 
$$\beta_3^4 \leq \binom{n-2}{3}.$$
Hence, in order to complete the proof, we have to show that 
$\beta_4^4\leq \binom{n-1}{4}$.
We will use similar methods as in the
proof of Case 3 of Lemma \ref{lucky} with the difference that, in that case, we had $\beta_3^4<0$. 
From Proposition \ref{cook} and the fact that $\qdepth(S/I)=4$, we have 
\begin{equation}\label{cocos1}
0\leq \beta_1^4 = \alpha_1 - 4 = n-4\text{ and }0\leq \beta_2^4 = \alpha_2 - 3\alpha_1 + 6 \leq \binom{n-3}{2}.
\end{equation}
Also, we have that
\begin{equation}\label{cocos2}
0\leq \beta_3^4 = \alpha_3 - 2\alpha_2 + 3n-4 \leq \binom{n-2}{3}\text{ and }\beta_4^4=\alpha_4-\alpha_3+\alpha_2-n+1\geq 0
\end{equation}
On the other hand, since $\beta_4^4=\alpha_4-\beta_3^3$ and $\beta_3^3\geq 0$, we can assume that 
$$\alpha_4\geq \binom{n-1}{4} + 1 = \binom{n-1}{4}+\binom{3}{3},$$ otherwise
there is nothing to prove. Therefore, from Lemma \ref{cord}, we have $\alpha_3\geq \binom{n-1}{3}+3$. 
Since $\binom{n}{4}-\binom{n}{3}+\binom{n}{2}-n+1=\binom{n-1}{4}$ and 
$\alpha_2\leq \binom{n}{2}$, in order to complete the proof it suffices to 
show that 
\begin{equation}\label{pohta}
\alpha_4-\alpha_3\leq \binom{n}{4}-\binom{n}{3}=\frac{1}{24}n(n-1)(n-2)(n-7).
\end{equation}
We consider two cases:
\begin{enumerate}
\item[(i)] $\alpha_3=\binom{n-1}{3} + \binom{n_2}{2}$ with $n-1>n_2\geq 3$. From Lemma \ref{cord}, we have
           $\alpha_4\leq \binom{n-1}{4} + \binom{n_2}{3}$ and thus
					 $$\alpha_4 - \alpha_3 \leq \binom{n-1}{4} - \binom{n-1}{3} +\binom{n_2}{3} - \binom{n_2}{2} = $$
					 \begin{equation}\label{cucurigu}
					 = \frac{1}{24}(n-1)(n-2)(n-3)(n-8)+\frac{1}{6}n_2(n_2-1)(n_2-5).
					 \end{equation}
					 Since $n\geq 6$, if $n_2\leq 5$, then \eqref{cucurigu} implies \eqref{pohta} and we are done.
					 Now, assume that $n_2\geq 6$ and thus $n\geq 8$. Hence, from \eqref{cucurigu} it follows that					
					 \begin{align*}
					 & \alpha_4-\alpha_3 \leq \frac{1}{24}(n-1)(n-2)(n-3)(n-8)+\frac{1}{6}(n-2)(n-3)(n-7)=\\
					 & = \frac{1}{24}(n-2)(n-3)(n^2-5n-20)=\frac{1}{24}(n-2)(n^3-8n^2-5n+60)\leq \\
					 & \leq \frac{1}{24}(n-2)(n^3-8n^2+7n) = \binom{n}{4}-\binom{n}{3},
					 \end{align*}
					 and thus we are done.
\item[(ii)] $\alpha_3=\binom{n-1}{3} + \binom{n_2}{2}+\binom{n_1}{1}$ with $n-1>n_2\geq 3$ and $n_2>n_1\geq 1$.
           From Lemma \ref{cord}, we have
           $\alpha_4\leq \binom{n-1}{4} + \binom{n_2}{3}+\binom{n_1}{2}$ and thus
					 $$\alpha_4 - \alpha_3 \leq \binom{n-1}{4} - \binom{n-1}{3} +\binom{n_2}{3} - \binom{n_2}{2} + \binom{n_1}{2}-n_1 = $$
					 \begin{equation}\label{cucurigu2}
					 = \frac{1}{24}(n-1)(n-2)(n-3)(n-8)+\frac{1}{6}n_2(n_2-1)(n_2-5)+\frac{1}{2}n_1(n_1-3).
					 \end{equation}
					 If $n_2\leq 4$, then $n_1\leq 3$ and thus, since $n\geq 6$, \eqref{cucurigu2} implies \eqref{pohta}.
					 If $n_2=5$, then $n_1\leq 4$ and $n\geq 7$ and, again, \eqref{cucurigu2} implies \eqref{pohta}. 
					
					 If $n_2\geq 6$, then $n\geq 8$ and from \eqref{cucurigu2} we get \small
					 \begin{align*} 
					  \binom{n}{4}-\binom{n}{3} - (\alpha_4-\alpha_3)  \geq & \binom{n}{4}-\binom{n}{3} -  \binom{n-1}{4} + \binom{n-1}{3} 
					   - \binom{n-2}{3} \\ 
						& + \binom{n-2}{2} - \binom{n-3}{2} + \binom{n-3}{1} =n-4,
					 \end{align*} \normalsize
					 and thus the conclusion follows from \eqref{pohta}.
\end{enumerate}
\end{proof}

\begin{cor}\label{cor3}
Let $I\subset S=K[x_1,\ldots,x_6]$ be a squarefree monomial ideal. Then $$\qdepth(I)\geq \qdepth(S/I).$$
\end{cor}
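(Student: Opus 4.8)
The plan is to set $q=\qdepth(S/I)$ and run a short case analysis on the value of $q$, invoking only the results already proved in this section. First I would record the global constraint: since $n=6$, Theorem \ref{t11} gives
$$q=\qdepth(S/I)\leq \dim(S/I)\leq n-1=5,$$
so $q$ is confined to the finite range $\{0,1,2,3,4,5\}$, which is exactly the range that the preceding theorems cover.

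Next I would dispatch the three subranges. For $0\leq q\leq 3$, Theorem \ref{teo2} applies directly and yields the stronger inequality $\qdepth(I)\geq q+1>q$, so in particular $\qdepth(I)\geq \qdepth(S/I)$. For $q=4$, Theorem \ref{t3} gives $\qdepth(I)\geq 4=q$, again as required. Finally, for the top value $q=5=n-1$, Theorem \ref{teo1} forces $I$ to be principal, and in that case $\qdepth(I)=n=6>5=q$. Since these cases exhaust all admissible values of $q$, the inequality $\qdepth(I)\geq \qdepth(S/I)$ holds unconditionally when $n=6$.

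The main point I expect to emphasize, rather than any real obstacle, is that all the substantive combinatorial work—the Kruskal--Katona estimates feeding into Lemmas \ref{lem4}, \ref{lem5} and \ref{lucky}, and into Theorems \ref{teo2} and \ref{t3}—has already been carried out. The corollary is therefore purely an assembly step: the dimension bound from Theorem \ref{t11} pins $q$ inside the interval $[0,5]$, and each integer in that interval is handled by one of Theorems \ref{teo1}, \ref{teo2} or \ref{t3}. There is no case left uncovered, so no additional estimate is needed.
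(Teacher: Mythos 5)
Your proposal is correct and follows essentially the same route as the paper: case analysis on $q=\qdepth(S/I)$, with $q=5$ handled by Theorem \ref{teo1} (principality, so $\qdepth(I)=6$) and the remaining values by the earlier theorems. In fact your write-up is slightly more careful than the paper's, which cites only Theorem \ref{t3} for all $q\leq 4$, whereas you correctly split off $q\leq 3$ via Theorem \ref{teo2}, since Theorem \ref{t3} literally covers only $q=4$.
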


\begin{proof}
Let $q=\qdepth(S/I)$. If $q=5$, then, according to Theorem \ref{teo1}, $I$ is principal and we have $\qdepth(I)=6$.
If $q\leq 4$, then the conclusion follows from Theorems \ref{teo2} and \ref{t3}.
\end{proof}

We may ask if the inequality $\qdepth(I)\geq \qdepth(S/I)$ holds in general. The answer is negative:

\begin{exm}\label{minus}\rm
Consider the ideal 
$$I=(x_1)\cap(x_2,x_3,\ldots,x_{13})\subset K[x_1,\ldots,x_{13}].$$
It is easy to check that $\alpha_0(S/I)=1$, $\alpha_1(S/I)=13$ and 
$\alpha_k(S/I)=\binom{12}{k}$ for all $2\leq k\leq 13$. By straightforward computations, 
we get $\qdepth(S/I)=8$. On the other hand, we have $\alpha_0(I)=\alpha_1(I)=0$, $\alpha_2(I)=12$ and $\alpha_3(I)=66$.
Since $\beta^{8}_3(I)=66-6\cdot 12<0$ it follows that $\qdepth(I)<8$. In fact, we have $\qdepth(I)=7$.

Note that, according to Lemma \ref{lem1}, $\beta^{8}_3(I)<0$ is equivalent to $\beta^8_3(S/I)>\binom{13-8+3-1}{3}=\binom{7}{3}$.
Hence, the condition (2) from Lemma \ref{lem22} does not hold.

\end{exm}

\begin{exm}\label{minus2}\rm
Consider the ideal 
$$I=(x_1)\cap (x_ix_j\;:\;2\leq i < j \leq 14)\subset K[x_1,\ldots,x_{14}].$$
It is easy to check that $\alpha_0(S/I)=1$, $\alpha_{14}(S/I)=0$, $\alpha_1(S/I)=14$, $\alpha_2(S/I)=\binom{14}{2}$ and 
$\alpha_k(S/I)=\binom{13}{k}$ for all $3\leq k\leq 13$. 

By straightforward computations, 
we get $\qdepth(S/I)=7$. On the other hand, we have $\alpha_0(I)=\alpha_1(I)=\alpha_2(I)=0$, $\alpha_3(I)=\binom{13}{2}$ and
$\alpha_4(I)=\binom{13}{3}$. Since $\beta^7_4(I)=\alpha_4(I)-4\alpha_3(I)<0$ it follows that $\qdepth(I)<7$. In fact, it is easy
to check that $\qdepth(I)=6$. 
Again, $\beta^7_4(I)<0$ is equivalent to $\beta^7_4(S/I)> \binom{14-7+4-1}{4}=\binom{10}{4}$.
\end{exm}

\begin{exm}\label{minus3}\rm
Let $n=10$ and $\alpha_j=\binom{10}{j}$ for $0\leq j\leq 4$.
We also let $\alpha_j=\binom{9}{j}+\binom{8}{j-1}+\binom{6}{j-2}$ for $5\leq j\leq 7$. Hence
$\alpha=(1,10,45,120,197,216,155,70)$. From Kruskal-Katona Theorem there exists a squarefree monomial $I\subset S=K[x_1,\ldots,x_{10}]$
such that $\alpha_j=\alpha_j(S/I)$ for all $0\leq j\leq 7$. By straightforward computations, we have $\qdepth(S/I)=7$ and, moreover,
$$\beta_5^7(S/I)=24>\binom{10-7+5-1}{5}=\binom{7}{5}=21.$$
This implies that $\qdepth(I)\leq 6$. In fact, we have $\qdepth(I)=6$.
\end{exm}

The above examples yield us to propose the following conjectures:

\begin{conj}\label{conj1}
If $I\subset S$ is a squarefree monomial ideal with $\qdepth(S/I)\leq 6$, then
$$\qdepth(I)\geq\qdepth(S/I).$$
\end{conj}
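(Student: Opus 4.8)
\emph{Reduction.} For $\qdepth(S/I)\le 4$ the statement is Theorem \ref{teo2} together with Theorem \ref{t3}, and $\qdepth(S/I)=5$ is Theorem \ref{main}; so it remains to prove that $\qdepth(S/I)=6$ forces $\qdepth(I)\ge 6$. I would argue by induction on $n$. If $I\not\subset\me^2$, then as in Remark \ref{reducere} one writes $I=(I',x_{m+1},\ldots,x_n)$ with $I'\subset S'=K[x_1,\ldots,x_m]$, $I'\subset\me'^2$ and $\qdepth(S'/I')=\qdepth(S/I)=6$; the inductive hypothesis gives $\qdepth(I')\ge\qdepth(S'/I')$, whence $\qdepth(I)\ge\qdepth(S/I)+1$ by Lemma \ref{patrat}. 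Thus I may assume $I\subset\me^2$, i.e. $\alpha_0(S/I)=1$, $\alpha_1(S/I)=n$ and $n\ge 8$.

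\emph{The numerical criterion.} By Lemma \ref{lem22}, and since Proposition \ref{cook} already covers $k\in\{1,2\}$, everything collapses to the four inequalities
$$\beta_k^6(S/I)\le\binom{n+k-7}{k},\qquad k\in\{3,4,5,6\}.$$
For each fixed $k$ the engine is the combinatorial optimization of Lemma \ref{lem5}, Lemma \ref{lucky} and Theorem \ref{t3}: viewing the $\alpha_j(S/I)$ as $f$-vector entries, Lemma \ref{cord} (Kruskal--Katona) bounds $\alpha_k$ from above through $\alpha_{k-1}$ and, dually, forces lower bounds on the $\alpha_j$ with $j<k$ once $\alpha_k$ is large, while Corollary \ref{max} supplies the constraints $\beta_j^6(S/I)\ge 0$ for $0\le j\le 6$. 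Assuming a target inequality fails, these two families of constraints should pin the $\alpha_j$ near the binomial thresholds $\binom{n}{j},\binom{n-1}{j},\binom{n-2}{j}$; the exact boundary $\alpha_j(S/I)=\binom{n}{j}$ is settled by Corollary \ref{nun} (where $\beta_k^6(S/I)=\binom{n+k-7}{k}$ with equality), and the finitely many exceptional small $n$ are cleared by explicit $\alpha$-vector tables, exactly as in the subcases of Lemma \ref{lucky}. The top case $k=6$ is the most direct to launch: since $\beta_6^6(S/I)=\alpha_6(S/I)-\beta_5^5(S/I)\le\alpha_6(S/I)$ by Lemma \ref{lem3} and Corollary \ref{max}, it suffices to control $\alpha_6(S/I)$ against $\binom{n-1}{6}$.

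\emph{Main obstacle.} The hard part is the middle range $k\in\{4,5\}$, where neither $\beta_k^k\le\alpha_k$ nor a single Kruskal--Katona step decides the inequality, and one must juggle the positivity of several $\beta_j^6$ against the Kruskal--Katona estimates at once; already for $q=3,4$ this produced the branching analysis of Lemma \ref{lucky} and Theorem \ref{t3}, and for $q=6$ both the number of branches and the span of exceptional $n$ grow, so the real issue is to organize the bookkeeping so that it terminates. A cleaner route I would try first rests on the identities $\beta_k^q(M)=\beta_k^{q-1}(M)-\beta_{k-1}^{q-1}(M)$ (valid for $0\le k\le q-1$ and $M\in\{I,S/I\}$) and $\binom{n-q+k-1}{k}=\binom{n-q+k}{k}-\binom{n-q+k-1}{k-1}$, both immediate from Pascal's rule. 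Via Lemma \ref{lem1} these transform the desired bounds $\beta_k^6(I)\ge 0$ for $1\le k\le 5$ into the single monotonicity statement that the sequence $\beta_0^5(I),\beta_1^5(I),\ldots,\beta_5^5(I)$ is nondecreasing, leaving only the separate top condition $\beta_6^6(I)\ge 0$; this would let one bootstrap from the $q=5$ estimates of Theorem \ref{main}. Whether the nonnegativity hypotheses persist along such a descent is, I suspect, exactly the point at which the argument currently stalls, which is why the statement is offered here only as a conjecture.
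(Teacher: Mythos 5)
The statement you are attacking is Conjecture \ref{conj1}: the paper itself offers no proof of it, only the partial results you invoke in your reduction (Theorems \ref{teo2}, \ref{t3} and \ref{main}, which settle $\qdepth(S/I)\le 5$), so the question is not whether you match the paper's argument but whether you close the open case $\qdepth(S/I)=6$ --- and you do not. Your reduction itself is sound: induction on $n$ via Remark \ref{reducere} and Lemma \ref{patrat} legitimately reduces to $I\subset\me^2$ and $n\ge 8$, and Lemma \ref{lem22} together with Proposition \ref{cook} reduces everything to the inequalities $\beta_k^6(S/I)\le\binom{n+k-7}{k}$ for $k\in\{3,4,5,6\}$. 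In fact you concede more than necessary: Lemma \ref{b3q} is stated for $q\in\{5,6,7\}$, so the case $k=3$ is already settled by the paper, leaving exactly $k\in\{4,5,6\}$.

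Those three inequalities are precisely where your argument has no content, and your ``cleaner route'' does not supply it. The identity $\beta_k^q=\beta_k^{q-1}-\beta_{k-1}^{q-1}$ is correct (Pascal's rule applied to \eqref{betak}), but it merely restates $\beta_k^6(I)\ge 0$ for $1\le k\le 5$ as monotonicity of the sequence $\beta_0^5(I),\ldots,\beta_5^5(I)$; it is an equivalent reformulation, not a reduction. The hoped-for bootstrap from the $q=5$ results fails quantitatively: the proofs of Lemmas \ref{b3q}, \ref{b45} and \ref{b55} do apply when $\qdepth(S/I)=6$ (they only use $\beta_j^5(S/I)\ge 0$, which Corollary \ref{max} guarantees), but via Lemma \ref{lem1} they yield only $\beta_k^5(I)\ge 0$ for $k\le 5$ --- that is, nonnegativity, hence $\qdepth(I)\ge 5$ --- which is strictly weaker than the monotonicity you need: for instance $\beta_4^6(I)\ge 0$ demands $\beta_4^5(I)\ge\beta_3^5(I)$, and nothing in the paper, or in your sketch, bounds $\beta_3^5(I)$ above by $\beta_4^5(I)$. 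So the genuine gap is the entire content of the conjecture: Kruskal--Katona estimates of the type of Lemmas \ref{b45} and \ref{b55} for $\beta_4^6$, $\beta_5^6$ and $\beta_6^6$, with more branches and a wider range of exceptional $n$, which neither you nor the paper carries out. You are candid that the argument ``stalls,'' which is the right self-assessment; just be aware that what remains is not bookkeeping but the actual difficulty, and that such estimates are delicate in $q$ --- Example \ref{minus} shows the analogous bound for $\beta_3^q$ already fails at $q=8$, so any successful argument for $q=6$ must exploit the hypothesis $q\le 6$ in an essential way.
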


\begin{conj}\label{conj2}
Let $I\subset S=K[x_1,\ldots,x_n]$ be a squarefree monomial. If $n\leq 9$, then
$$\qdepth(I)\geq\qdepth(S/I).$$
\end{conj}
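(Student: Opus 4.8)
The plan is to reduce to a small list of exceptional pairs $(n,q)$ with $q=\qdepth(S/I)$, and then to run the Kruskal--Katona casework developed in Lemma \ref{lucky} and Theorem \ref{t3}. First, by Remark \ref{reducere} together with Lemma \ref{patrat}, and arguing by induction on $n$, I may assume that $I$ is not principal and $I\subseteq\me^2$: if $I\not\subseteq\me^2$ one peels off the variables occurring linearly and applies the induction hypothesis in fewer variables, while the principal case is handled by Theorem \ref{teo1}. Under these assumptions $q\leq n-1$ by Theorem \ref{t11}, and $q=n-1$ would force $I$ principal by Theorem \ref{teo1}, so in fact $q\leq n-2$; thus for $n\leq 9$ we have $q\leq 7$. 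The cases $q\leq 3$, $q=4$ and $q=5$ are settled by Theorem \ref{teo2}, Theorem \ref{t3} and Theorem \ref{main} respectively, and $n\leq 7$ is covered by Corollaries \ref{cor2}, \ref{cor3} and \ref{cmain}. Hence the only genuinely new situations are
$$(n,q)\in\{(8,6),\,(9,6),\,(9,7)\},$$
and these are what the argument must dispose of.

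For each such pair, Lemma \ref{lem22} and Proposition \ref{cook} reduce the statement to the single family of inequalities
\begin{equation*}
\beta_k^q(S/I)\leq\binom{n-q+k-1}{k},\qquad 3\leq k\leq q,
\end{equation*}
since $k=1,2$ are automatic. The available constraints are exactly those used in Theorem \ref{t3}: the positivity relations $\beta_j^q(S/I)\geq 0$ for $0\leq j\leq q$ coming from $\qdepth(S/I)=q$ via Corollary \ref{max}, the normalizations $\alpha_0=1$, $\alpha_1=n$, and the Kruskal--Katona upper-shadow bounds of Lemma \ref{cord}. Because each $\beta_k^q(S/I)$ is a fixed integer combination of $\alpha_0,\dots,\alpha_k$, proving the inequality amounts to maximizing a linear functional over the finite set of $f$-vectors $(\alpha_2,\dots,\alpha_q)$ admissible under these constraints and checking that the maximum stays below the target.

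Concretely, I would handle the top index $k=q$ by Lemma \ref{lem3} ($\beta_q^q\leq\alpha_q$) combined with the Kruskal--Katona bound on $\alpha_q$, exactly as in Lemma \ref{lucky}; for the intermediate indices $3\leq k<q$ I would expand $\beta_k^q=\alpha_k-(\text{lower-order terms})$, bound $\alpha_k$ from above by the $(k-1)$-st upper shadow of $\alpha_{k-1}$, and bound the subtracted terms from below using the relations $\beta_j^q\geq 0$. For $n\in\{8,9\}$ the Kruskal--Katona expansion $\alpha_{k-1}=\binom{n_{k-1}}{k-1}+\cdots$ has only a few admissible leading parts $n_{k-1}$, so the verification splits into finitely many numeric subcases analogous to the tables appearing in Lemma \ref{lucky}; in practice this enumeration can be organized, and independently cross-checked, by computer.

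The main obstacle, and the reason the statement is offered only as a conjecture, is that for $q=6,7$ the slack between $\beta_k^q(S/I)$ and the target $\binom{n-q+k-1}{k}$ shrinks while the number of Kruskal--Katona-admissible $f$-vectors grows sharply, so the clean two- or three-way splits of Theorem \ref{t3} no longer control every case. What would make the argument uniform is a \emph{compression} principle asserting that each functional $\beta_k^q$ attains its maximum at a compressed (lexsegment) complex; such a monotonicity statement would collapse the casework to the extremal $f$-vectors and is, in my view, the decisive missing ingredient. Granting the three pairs above, the reductions of the first paragraph then yield $\qdepth(I)\geq\qdepth(S/I)$ for every squarefree $I$ in at most nine variables.
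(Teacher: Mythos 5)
You have not proved this statement, but neither does the paper: it is stated as Conjecture \ref{conj2} and offered without proof, and the paper's own results cover only $\qdepth(S/I)\leq 5$ (Theorems \ref{teo2}, \ref{t3} and \ref{main}) and $n\leq 7$ (Corollaries \ref{cor2}, \ref{cor3} and \ref{cmain}). Your reduction is correct as far as it goes: induction on $n$ via Remark \ref{reducere} and Lemma \ref{patrat} legitimately lets you assume $I$ non-principal with $I\subseteq\me^2$, Theorem \ref{teo1} then gives $q\leq n-2$, and combining the known results leaves exactly the pairs $(n,q)\in\{(8,6),(9,6),(9,7)\}$; the translation of these cases into the inequalities $\beta_k^q(S/I)\leq\binom{n-q+k-1}{k}$ via Lemma \ref{lem22} and Proposition \ref{cook} is also right. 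In fact you leave slightly more on the table than necessary: the paper's Lemma \ref{b3q} already establishes the case $k=3$ for $q\in\{6,7\}$, so only the indices $4\leq k\leq q$ in those three pairs are genuinely open.

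The gap is the last step. Invoking a hoped-for compression (lexsegment) principle, or gesturing at a finite enumeration of Kruskal--Katona-admissible $f$-vectors that could be organized by computer, is a plan, not an argument: no inequality is actually verified for $(8,6)$, $(9,6)$ or $(9,7)$, and you concede as much. To be fair, the gap you leave coincides exactly with what is open in the paper, and the paper's Examples \ref{minus}, \ref{minus2} and \ref{minus3} (failures at $(n,q)=(13,8)$, $(14,7)$ and $(10,7)$) show the conjectured bound $n\leq 9$ sits at the edge, so any proof must use $n\leq 9$ quantitatively rather than formally. Your enumeration strategy is sound in principle: since the $\alpha_j(S/I)$ form the $f$-vector of the Stanley--Reisner complex, Theorem \ref{krusk} makes the admissible set exactly the set of $f$-vectors, the constraints $\beta_j^q(S/I)\geq 0$ are sound relaxations of $\qdepth(S/I)=q$, and a violating $f$-vector would conversely be realized by an actual ideal, as in Example \ref{minus3}. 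Carrying out that finite check would settle the three pairs and, with your reduction, the conjecture for $n\leq 9$; but as written the proposal is a correct reduction plus an unexecuted plan, not a proof.
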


In the next section we will give a partial answer to these conjectures.

\section{The case $\qdepth(S/I)=5$.}

Let $(0)\neq I\subset S$ be a proper squarefree monomial ideal with $q:=\qdepth(S/I)\geq 5$. We will also assume that $I$ is not principal
and $I\subset \mathfrak m^2$. In particular, we have $n\geq q+2$.

\begin{lema}\label{b3q}
Suppose that $q\in\{5,6,7\}$. We have that:
$$\beta^q_3(S/I)\leq \binom{n-q+2}{3}.$$
\end{lema}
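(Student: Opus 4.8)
The plan is to collapse the statement to a single Kruskal--Katona estimate and then to a polynomial inequality that can be verified by hand for the three values $q\in\{5,6,7\}$. Throughout I write $\alpha_k=\alpha_k(S/I)$ and $\beta_k^d=\beta_k^d(S/I)$. First I would carry out the same reductions used in Theorem \ref{teo2} and Theorem \ref{t3}. If $I=(I',x_{m+1},\dots,x_n)$ with $I'\subset S'=K[x_1,\dots,x_m]$ and $I'\subset\me'^2$, then $\alpha_j(S/I)=\alpha_j(S'/I')$ for all $j$, so both $\hdepth(S/I)=q$ and the number $\beta_3^q(S/I)$ are unchanged, while $\binom{m-q+2}{3}\le\binom{n-q+2}{3}$; hence I may assume $I\subset\me^2$, so that $\alpha_0=1$ and $\alpha_1=n$. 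If $I$ is principal the claim is immediate from Lemma \ref{liema}(5), and if $\alpha_2=\binom{n}{2}$ then Corollary \ref{nun} gives $\beta_3^q=\binom{n-q+2}{3}$ with equality; so I may also assume $I$ is not principal, whence $n\ge q+2$ by Remark \ref{reducere}, and $\alpha_2<\binom{n}{2}$.

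Next I would rewrite the quantity to be bounded. Expanding \eqref{betak} with $\alpha_0=1$ and $\alpha_1=n$ gives
$$\beta_3^q=\alpha_3-(q-2)\alpha_2+\binom{q-1}{2}n-\binom{q}{3},$$
and \eqref{magic} with $k=3$ identifies the target constant with the value in the complete case,
$$\binom{n-q+2}{3}=\binom{n}{3}-(q-2)\binom{n}{2}+\binom{q-1}{2}n-\binom{q}{3}.$$
Subtracting, the assertion $\beta_3^q\le\binom{n-q+2}{3}$ becomes the purely combinatorial inequality
$$\alpha_3-(q-2)\alpha_2\le\binom{n}{3}-(q-2)\binom{n}{2}.$$
Applying Lemma \ref{cord} in the form $\alpha_3\le\alpha_2^{(2)}$, it suffices to prove $\alpha_2^{(2)}-(q-2)\alpha_2\le\binom{n}{3}-(q-2)\binom{n}{2}$. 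Finally, $\hdepth(S/I)=q$ forces $\beta_2^q\ge0$, i.e. $\alpha_2\ge(q-1)n-\binom{q}{2}$, equivalently $\binom{n}{2}-\alpha_2\le\binom{n-q+1}{2}$; so everything comes down to an inequality about the single function $a\mapsto a^{(2)}-(q-2)a$ on the range $(q-1)n-\binom{q}{2}\le a\le\binom{n}{2}$.

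The heart of the argument is to show this function is maximised at $a=\binom{n}{2}$, where equality holds. Using the greedy expansion $a=\binom{n_2}{2}+\binom{n_1}{1}$ and Lemma \ref{cord}, the discrete increment $a^{(2)}-(a-1)^{(2)}$ equals $n_1-1$, $0$ or $n_2-2$ according to the shape of the expansion; in particular, for $a=\binom{m}{2}$ one obtains the closed form
$$\binom{n}{3}-\binom{m}{3}-(q-2)\Big(\binom{n}{2}-\binom{m}{2}\Big)=\sum_{i=m}^{n-1}\frac{i(i-2q+3)}{2},$$
whose summands are nonnegative as soon as $i\ge 2q-3$. Because the increment can vanish, the function need not be monotone; however the increment computation shows that the local minima of $a\mapsto\binom{n}{3}-a^{(2)}-(q-2)(\binom{n}{2}-a)$ occur exactly at $a=\binom{m}{2}$, so only these values must be checked.

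The hard part will be the regime $m<2q-3$, where the closed form above contains negative summands. Here I would invoke the constraint $\binom{m}{2}\ge\binom{n}{2}-\binom{n-q+1}{2}=\tfrac{(q-1)(2n-q)}{2}$ to bound $m$ from below. For $q\in\{5,6,7\}$ and $n\ge q+2$ a short computation shows this either forces $m\ge 2q-3$, so that the sum is a sum of nonnegative terms, or pins $n$ down to a small value near $q+2$ for which only finitely many admissible expansions of $\alpha_2$ remain; those I would dispatch by explicit evaluation of $\alpha_2^{(2)}-(q-2)\alpha_2$, exactly in the style of the tables in Lemma \ref{lucky}. Thus the main obstacle is purely the boundary bookkeeping at small $n$, the general case being subsumed by the nonnegativity of the displayed sum.
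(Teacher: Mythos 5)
Your reduction and the first half of your analysis track the paper's own proof closely: the paper likewise rewrites the claim as $\alpha_3-(q-2)\alpha_2\le\binom{n}{3}-(q-2)\binom{n}{2}$, invokes $\beta_2^q\ge 0$, and studies $f(x)=\binom{x}{3}-(q-2)\binom{x}{2}$, which is exactly your $a\mapsto a^{(2)}-(q-2)a$ evaluated at $a=\binom{x}{2}$. But there is a genuine gap: the statement you reduce to is false. You claim it suffices to prove
$$\alpha_2^{(2)}-(q-2)\alpha_2\le\binom{n}{3}-(q-2)\binom{n}{2}$$
for every $\alpha_2$ in the admissible range $(q-1)n-\binom{q}{2}\le\alpha_2\le\binom{n}{2}$, the residual small cases being ``dispatched by explicit evaluation.'' Take $q=7$, $n=11$, $\alpha_2=\binom{10}{2}=45$; this is admissible, since $(q-1)n-\binom{q}{2}=6\cdot 11-21=45$, i.e. $\beta_2^7=0$. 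Then $\alpha_2^{(2)}-(q-2)\alpha_2=\binom{10}{3}-5\cdot 45=-105$, while $\binom{11}{3}-5\binom{11}{2}=-110$, so the inequality you want fails by $5$: explicit evaluation exposes a counterexample rather than dispatching it. In terms of the lemma, single-step Kruskal--Katona allows $\alpha_3=120$, which would give $\beta_3^7=25>\binom{6}{3}=20$.

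The missing idea is precisely how the paper closes this configuration (its case (iii), $q=7$, $n=11$, $n_2=10$): one must bring in the further constraint $\beta_4^7(S/I)\ge 0$, i.e. $\alpha_4\ge 4\alpha_3-265$, and play it against Kruskal--Katona in the upward direction ($\alpha_4\le\binom{10}{4}=210$, then $\alpha_4\le\alpha_3^{(3)}$), iterating: $\alpha_3\le 118$, hence $\alpha_4\le\binom{9}{4}+\binom{8}{3}+\binom{6}{2}=197$, hence $\alpha_3\le 115$, which finally yields $\alpha_3-5\alpha_2\le -110$ as required. Nothing in your toolkit ($\beta_2^q\ge 0$ plus one application of Lemma \ref{cord}) can rule this case out, so the proposal cannot be completed as written. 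A secondary, smaller imprecision: the minimum of $D(a)=\binom{n}{3}-a^{(2)}-(q-2)\bigl(\binom{n}{2}-a\bigr)$ over the admissible interval can also occur at the left endpoint $(q-1)n-\binom{q}{2}$, which need not be of the form $\binom{m}{2}$; since $D$ is unimodal on each block, this is repaired by also checking the block boundary immediately below the left endpoint, but unlike the main gap this is only a bookkeeping fix.
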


\begin{proof}
We denote $\alpha_j:=\alpha_j(S/I)$ for all $j\geq 0$, and $\beta_k^q=\beta_k^q(S/I)$ for all $0\leq k\leq q$.
From \eqref{betak} we have that 
\begin{equation}\label{eco1}
\beta_3^q=\alpha_3-(q-2)\alpha_2+\binom{q-1}{2}\alpha_1-\binom{q}{3}.
\end{equation}
Since, from hypothesis, we have that 
$$\alpha_0=1,\; \alpha_1=n\text{ and }\binom{n}{3}-(q-2)\binom{n}{2}+\binom{q-1}{2}n-\binom{q}{3}=\binom{n-q+2}{3},$$
in order to complete the proof it is enough to show that
\begin{equation}\label{eco2}
\alpha_3-(q-2)\alpha_2 \leq \binom{n}{3}-(q-2)\binom{n}{2}=\frac{n(n-1)(n-3q+4)}{6}.
\end{equation}
Since $q=\qdepth(S/I)$, it follows that
\begin{equation}\label{cond}
\beta_2^q =\alpha_2-(q-1)\alpha_1+\binom{q}{2}=\alpha_2-n(q-1)+\binom{q}{2}\geq 0,\;\alpha_2\geq \frac{(2n-q)(q-1)}{2}.
\end{equation}
We consider the function 
\begin{equation}\label{ecos5}
f(x)=\binom{x}{3}-(q-2)\binom{x}{2}=\frac{x(x-1)(x-3q+4)}{6},\text{ where }x\geq 0.
\end{equation}
The derivative of $f(x)$ is the quadratic function
$$f'(x)=\frac{1}{2}x^2-(q-1)x+\frac{q}{2}-\frac{2}{3}.$$
Since the discriminant of $f'(x)$ is $\Delta=q^2-3q+\frac{7}{3}$, it follows that 
$$f'(x)\leq 0\text{ for }x\in \left[q-1-\sqrt{q^2-3q+\frac{7}{3}},q-1+\sqrt{q^2-3q+\frac{7}{3}}\right].$$
Therefore
\begin{equation}\label{eco5}
\begin{split}
& f(0)=f(1)=0,\;f(2q-3)=f(2q-2),\\
& f(x) \searrow \text{ on }[1,2q-3]\text{ and }f(x) \nearrow \text{ on }[2q-2,\infty).
\end{split}
\end{equation}
Assume that $\alpha_2=\binom{n_2}{2}$ for some integer $n_2 \geq 2$. 
From Lemma \ref{cord} it follows that $\alpha_3\leq \binom{n_2}{3}$ and, therefore
\begin{equation}\label{eco3}
\alpha_3 - (q-2)\alpha_2 \leq \binom{n_2}{3}-(q-2)\binom{n_2}{2}=f(n_2).
\end{equation}
If $n_2=n$, then from \eqref{eco3} it follows that equation \eqref{eco1} is satisfied and thus there is nothing to prove.
Hence, we may assume that $n_2 \leq n-1$. Since $\alpha_2=\binom{n_2}{2}$, from \eqref{cond} it follows that 
\begin{equation}\label{eco4}
n_2(n_2-1) \geq (2n-q)(q-1).
\end{equation}
If $n_2\geq 2q-3$ and, henceforth $n\geq 2q-2$, then $f(n_2)\leq f(n)$ and, again, we are done from \eqref{eco3}. Thus, we can assume that $n_2\leq 2q-4$.
Also, if $n\geq 3q-4$, then from \eqref{ecos5} it follows that $f(n_2)\leq f(n)$ and there is nothing to prove. Thus $n\leq 3q-5$.
We consider three cases:
\begin{enumerate}
\item[(i)] $q=5$. From all the above, it follows that $n\leq 10$ and $n_2\leq 6$. Also, $n\geq q+2=7$. Therefore we have
$n_2(n_2-1)\leq 30$ and $(2n-q)(q-1)\geq (14-5)\cdot 4=36$. Thus \eqref{eco4} is impossible.
\item[(ii)] $q=6$. Similarly, we have $8\leq n\leq 13$ and $n_2\leq 8$. If $n_2\leq 7$, then $n_2(n_2-1)\leq 42$, while 
$(2n-q)(q-1)\geq 50$, thus \eqref{eco4} leads to a contradiction again. Also, if $n_2=8$, then $n\geq 9$ and thus
$$(2n-q)(q-1)\geq 60 > 56=n_2(n_2-1).$$ So, again, \eqref{eco4} does not hold.
\item[(iii)] $q=7$. We have $9\leq n\leq 16$ and $n_2\leq 10$. If $n_2\leq 8$, then $n_2(n_2-1)\leq 56$, while 
$(2n-q)(q-1)\geq 66$. If $n_2=9$, then $n\geq 10$ and $$(2n-q)(q-1)\geq 78\geq 72=n_2(n_2-1).$$
On the other hand, if $n_2=10$, then $n\geq 11$ and $(2n-q)(q-1)\geq 90=n_2(n_2-1)$. Hence, \eqref{eco4} is satisfied 
for $n=11$ and $n_2=10$. Since $\qdepth(S/I)=7$ it follows that
$$\beta_4^7(S/I)=\alpha_4-4\alpha_3+\binom{5}{2}\alpha_2-\binom{6}{3}n+\binom{7}{3}\geq 0,$$
and therefore 
\begin{equation}\label{eco7}
\alpha_4\geq 4\alpha_3 - 450+220-35=4\alpha_3 - 265.
\end{equation}
Since $\alpha_2=\binom{10}{2}$, from Lemma \ref{cord} it follows that $\alpha_4\leq 210$. Hence, from \eqref{eco7}
we get that $4\alpha_3 \leq 475$ and thus $\alpha_3\leq 118$. However, as $118=\binom{9}{3}+\binom{8}{2}+\binom{6}{1}$
this implies $\alpha_4\leq \binom{9}{4}+\binom{8}{3}+\binom{6}{2}=197$, which from \eqref{eco7} implies again that
$4\alpha_3\leq 265+197=462$. Thus $\alpha_3\leq 115$. It follows that
$$\alpha_3-(q-2)\alpha_2=115-5\cdot \binom{10}{2}=-110=\binom{11}{3}-5\cdot\binom{11}{2},$$
hence \eqref{eco2} is satisfied.
\end{enumerate}
Now, assume that $\alpha_2=\binom{n_2}{2}+\binom{n_1}{1}$, where $n_2>n_1\geq 1$ and $n_2\leq n-1$. From Lemma \ref{cord} it
follows that $\alpha_3\leq \binom{n_2}{3}+\binom{n_1}{2}$. Therefore, we have that:
\begin{equation}\label{eco8}
\alpha_3-(q-2)\alpha_2 \leq  \binom{n_2}{3}-(q-2)\binom{n_2}{2}+ \binom{n_1}{2} - (q-2) \binom{n_1}{1}.
\end{equation}
On the other hand, from \eqref{cond} it follows that
\begin{equation}\label{eco9}
(n_2+2)(n_2-1)\geq n_2(n_2-1)+2n_1 = 2\alpha_2 \geq (2n-q)(q-1).
\end{equation}
We consider the function
$$g(y)=\binom{y}{2}-(q-2)y,\text{ where }y\geq 0.$$
Since $g'(y)=y-(q-\frac{3}{2})$ it follows that
$$g(0)=0, g(y) \searrow \text{ on }[0,q-2],\;g(q-2)=g(q-1)=-\binom{q-1}{2},$$
\begin{equation}\label{eco55}
g(y) \nearrow \text{ on }[q-1,\infty)\text{ and }g(2q-3)=0.
\end{equation}
From \eqref{eco2} and \eqref{eco8}, in order to complete the proof we have to show that
\begin{equation}\label{desiree}
f(n_2)+g(n_1) \leq f(n) = \frac{n(n-1)(n-3q+4)}{6}.
\end{equation}
Assume that $n_2\geq 2q-3$. We have $n\geq 2q-2$. If $n_1\leq 2q-3$, then from \eqref{eco5} and \eqref{eco55}
it follows that $$f(n_2)+g(n_1)\leq f(n_2)\leq f(n),$$
and thus we are done. Now, assume that $n_1\geq 2q-2$. We have $n_2\geq 2q-1$ and $n\geq 2q$. 
From \eqref{eco5} and \eqref{eco55}, by straightforward computations, it follows that
\begin{align*}
& f(n)-(f(n_2)+g(n_1))\geq f(n_2+1)-f(n_2)-g(n_1) \geq \\
& \geq f(n_2+1)-f(n_2)-g(n_2-1) = n_2-q+1>0,
\end{align*}
and thus there is nothing to prove.

Now, we consider the case $n_2\leq 2q-4$. It follows that $1\leq n_1\leq 2q-5$ and $g(n_1)<0$. 
If $n\geq 3q-4$, from \eqref{ecos5} we get
$$f(n)\geq f(n_2)>f(n_2)+g(n_1),$$
and thus we are done. Hence, we may assume that $n\leq 3q-5$.
We consider three cases:
\begin{enumerate}
\item[(i)] $q=5$. We have $7\leq n\leq 10$. Also $n_2\leq 6$ and $n_1\leq 5$. 
From \eqref{eco9} it follows that $n=7$, $n_2=6$ and $n_1\in\{3,4,5\}$. Since $q-2=3$, we have $g(n_1)\leq g(5)=-5$ and therefore
$$f(n_2)+g(n_1) \leq f(6)+g(5) = -25-5 = -30 \leq -28 = f(7)=f(n),$$
and thus the required conclusion follows from \eqref{desiree}.
\item[(ii)] $q=6$. We have $8\leq n\leq 13$. Also $n_2\leq \min\{n-1,8\}$. If $n\geq 11$, then \eqref{eco9} leads to contradiction.
If $n=10$, then \eqref{eco9} implies $n_2=8$ and $n_1=7$. Hence
$$f(n_2)+g(n_1) = f(8)+g(7) = -63 < f(10)=-60,$$
and thus are done by \eqref{desiree}. 

If $n=9$, then \eqref{eco9} implies $n_2=8$ and $2\leq n_1\leq 7$. Since $g(n_1)\leq -7$ for all $2\leq n_1\leq 7$ it
follows that
$$f(n_2)+g(n_1)=f(8)+g(n_1)\leq - 56-7 = -64 < f(9) = -60,$$ 
and thus are done by \eqref{desiree}.

If $n=8$, then \eqref{eco9} implies $n_2=7$ and $4\leq n_1\leq 6$. Note that $g(4)=g(5)=-10$ and $g(6)=-9$.
It follows that
$$f(7)+g(n_1)\leq -49-9 = -58 < -56 = f(8),$$ 
and thus we are done by \eqref{desiree}.

\item[(iii)] $q=7$. We have $9\leq n\leq 16$. Also $n_2\leq \min\{n-1,10\}$. If $n\geq 13$, then \eqref{eco9} leads to contradiction.

If $n=12$, then \eqref{eco9} implies $n_2=10$ and $6\leq n_1\leq 9$. It follows that
$$f(n_2)+g(n_1) \leq f(10) + g(9) = -114 < -110 = f(12),$$
and we are done by \eqref{desiree}.

If $n=11$, then \eqref{eco9} implies $n_2=10$. Also, $1\leq n_1\leq 9$. It follows that
$$f(n_2)+g(n_1) \leq f(10) + g(1) = -110 = f(11),$$
and we are done by \eqref{desiree}.

If $n=10$, then \eqref{eco9} implies $n_2=9$ and $3\leq n_1\leq 8$. It follows that
$$f(n_2)+g(n_1) \leq f(9) + g(3) = -108 < -105 = f(10),$$
and we are done by \eqref{desiree}.

Finally, if $n=9$, then \eqref{eco9} implies $n_2=8$ and $5\leq n_1\leq 7$.
It follows that
$$f(n_2)+g(n_1) \leq f(8) + g(7) = -98 < -96 = f(9),$$
and we are done by \eqref{desiree}.
\end{enumerate}
\end{proof}

Note that, if $q\geq 8$, then the conclusion of Lemma \ref{b3q} does not hold in general; see Example \ref{minus2}.
In the following, we will assume that $q=\qdepth(S/I)=5$.

\begin{lema}\label{b45}
With the above notations, we have $\beta_4^5(S/I) \leq \binom{n-2}{4}$.
\end{lema}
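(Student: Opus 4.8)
The plan is to reduce, via Remark \ref{reducere}, to the case $I\subset\me^2$, so that $\alpha_0(S/I)=1$, $\alpha_1(S/I)=n$ and $n\geq q+2=7$. Writing $\alpha_j=\alpha_j(S/I)$, formula \eqref{betak} gives
$$\beta_4^5(S/I)=\alpha_4-2\alpha_3+3\alpha_2-4n+5,$$
and by \eqref{magic} (with $q=5$, $k=4$) the desired inequality $\beta_4^5(S/I)\leq\binom{n-2}{4}$ is equivalent to
$$\alpha_4-2\alpha_3+3\alpha_2\leq\binom{n}{4}-2\binom{n}{3}+3\binom{n}{2}.$$
I would keep this reformulation as the working target, since it expresses everything in terms of the three relevant $f$-numbers.

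First I would dispose of an easy case. The Pascal-type identity $\binom{6-j}{4-j}=\binom{5-j}{4-j}+\binom{5-j}{3-j}$ yields $\beta_4^5(S/I)=\beta_4^4(S/I)-\beta_3^4(S/I)$. By Lemma \ref{lem3} we have $\beta_4^4(S/I)\leq\alpha_4$, while Corollary \ref{max} (applied with $q'=4\leq\qdepth(S/I)=5$) gives $\beta_3^4(S/I)\geq0$. Hence $\beta_4^5(S/I)\leq\alpha_4$, so if $\alpha_4\leq\binom{n-2}{4}$ there is nothing to prove. Thus I may assume $\alpha_4>\binom{n-2}{4}$.

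For the remaining case I would exploit Kruskal--Katona (Lemma \ref{cord}) to convert the largeness of $\alpha_4$ into a lower bound on $\alpha_3$: writing the expansion $\alpha_4=\binom{n_4}{4}+\binom{n_3}{3}+\binom{n_2}{2}+\binom{n_1}{1}$, Lemma \ref{cord} forces $\alpha_3\geq\binom{n_4}{3}+\binom{n_3}{2}+\binom{n_2}{1}+\binom{n_1}{0}$, and the term $-2\alpha_3$ then compensates the positive contribution of $\alpha_4$. Simultaneously I would use $\alpha_2\leq\binom{n}{2}$ together with the lower bound $\beta_2^5(S/I)\geq0$, that is $\alpha_2\geq4n-10$, which holds since $\qdepth(S/I)=5$, to control $\alpha_2$ and exclude the degenerate configurations in which $\alpha_4,\alpha_3$ are small. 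As in the function-monotonicity argument of Lemma \ref{b3q} and the expansion bookkeeping of Theorem \ref{t3}, this should reduce the hypothesis $\alpha_4>\binom{n-2}{4}$ to a bounded range of $n$, after which a finite number of explicit checks, organized in tables as in Lemma \ref{lucky}, finishes the proof.

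The hard part will be the interaction of the three $f$-numbers under the alternating signs of $\beta_4^5$: unlike in Lemma \ref{b3q}, one cannot simply replace $\alpha_2$ by its maximal value $\binom{n}{2}$, since for small $\alpha_3,\alpha_4$ that bound is far too lossy and the inequality $\alpha_4-2\alpha_3\leq\binom{n}{4}-2\binom{n}{3}$ genuinely fails. The delicate step is therefore to couple the Kruskal--Katona lower bound on $\alpha_3$ coming from $\alpha_4$ with the $\qdepth(S/I)=5$ lower bound on $\alpha_2$, so that the extremal configurations collapse to the ``skeleton'' shapes $\alpha_j=\binom{m}{j}$ with $m\leq n$, for which the target holds because $\binom{m}{4}-2\binom{m}{3}+3\binom{m}{2}$ is increasing in $m$ for $m\geq2$.
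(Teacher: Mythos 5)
Your opening is correct and even elegant: the recurrence $\beta_k^{q+1}=\beta_k^q-\beta_{k-1}^q$ (your Pascal identity is misstated --- it should read $\binom{5-j}{4-j}=\binom{4-j}{4-j}+\binom{4-j}{3-j}$ --- but the conclusion $\beta_4^5(S/I)=\beta_4^4(S/I)-\beta_3^4(S/I)$ is right), combined with Lemma \ref{lem3} and Corollary \ref{max}, does let you assume $\alpha_4>\binom{n-2}{4}$. This mirrors the paper's opening of Lemma \ref{b55}; the paper's own proof of the present lemma takes a different route, running its case analysis on the Kruskal--Katona expansion of $\alpha_3$ rather than of $\alpha_4$. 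The problem is everything after that: what you give is a plan, and the toolkit you list cannot close it. You propose to finish with (a) the lower bound on $\alpha_3$ obtained from the expansion of $\alpha_4$ via Lemma \ref{cord}, (b) $\alpha_2\leq\binom{n}{2}$, and (c) $\beta_2^5(S/I)\geq 0$, i.e.\ $\alpha_2\geq 4n-10$. But Lemma \ref{cord} propagates lower bounds from $\alpha_k$ to $\alpha_{k-1}$, never upward, so (c) gives no information about $\alpha_3$ or $\alpha_4$; your claim that it ``excludes the degenerate configurations in which $\alpha_4,\alpha_3$ are small'' has no justification. The constraint that actually does this work in the paper is $\beta_3^5(S/I)\geq 0$, i.e.\ $3\alpha_2\leq\alpha_3+6n-10$, hence also $\alpha_3\geq 3\alpha_2-6n+10\geq 6n-20$ (see \eqref{papusoi}): it is simultaneously the missing upper bound on $\alpha_2$ in terms of $\alpha_3$ and the missing lower bound on $\alpha_3$. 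You never invoke it.

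To see that this omission is fatal rather than cosmetic, take $n=7$, $\alpha_2=21$, $\alpha_3=18$, $\alpha_4=10=\binom{5}{4}+\binom{4}{3}+\binom{2}{2}$. This triple satisfies every condition you use: $\alpha_4>\binom{5}{4}$, the Kruskal--Katona inequalities between consecutive levels, $\alpha_2\leq\binom{7}{2}$, and $\alpha_2\geq 4\cdot 7-10=18$; yet $\beta_4^5=10-36+63-28+5=14>\binom{5}{4}=5$. So no bookkeeping based only on your three ingredients can prove the lemma; the configuration is killed only by $\beta_3^5\geq 0$ (which forces $3\alpha_2\leq 50$, so $\alpha_2\leq 16$ and $\beta_4^5\leq -1$). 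Beyond this, the ``finite number of explicit checks'' you defer to is not an afterthought: it is the bulk of the paper's proof (three cases for the expansion of $\alpha_3$, each with subcases $n=7,8,9,10$ resolved by the interplay of \eqref{papusoi} with Lemma \ref{cord} and the monotonicity of the auxiliary functions $f,g,h$), and your final assertion that the extremal configurations ``collapse to the skeleton shapes $\alpha_j=\binom{m}{j}$'' is precisely the statement that would require proof --- the paper's analysis shows the tight configurations are generally not of that shape.
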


\begin{proof}
We denote $\alpha_j:=\alpha_j(S/I)$ for all $j\geq 0$, and $\beta_k^q=\beta_k^q(S/I)$ for all $0\leq k\leq q$.
Since $\qdepth(S/I)\geq 5$ it follows that $n\geq 7$ and
\begin{equation}\label{cucuruz}
\beta_2^5= \alpha_2 - 4n + 10 \geq 0 \text{ and } \beta_3^5 = \alpha_3-3\alpha_2+6n-10 \geq 0.
\end{equation}
From \eqref{cucuruz} it follows that
\begin{equation}\label{papusoi}
\alpha_2\geq 4n-10,\; 3\alpha_2 \leq \alpha_3+6n-10 \text{ and }\alpha_3\geq 3\alpha_2-6n+10 \geq 6n-20.
\end{equation}
On the other hand, the conclusion is equivalent to
\begin{equation}\label{cucur}
\beta_4^5=\alpha_4-2\alpha_3+3\alpha_2-4n+5\leq \binom{n-2}{4}.
\end{equation}
We consider the functions
$$
f(x)=\binom{x}{4}-2\cdot \binom{x}{3},\; g(x)=\binom{x}{3}-2\cdot \binom{x}{2}\text{ and }
h(x)=\binom{x}{2}-2\cdot \binom{x}{1}.$$
Note that $f(x)=\frac{1}{24}x(x-1)(x-2)(x-11)$ and we have:

\begin{table}[htb]
\centering
\begin{tabular}{|l|l|l|l|l|l|l|l|l|l|l|l|l|l|l|}
\hline
$x$   & 1 & 2 & 3  & 4  & 5   & 6   & 7   & 8   & 9   & 10  & 11 & 12  \\ \hline
$f(x)$ & 0 & 0 & -2 & -7 & -15 & -25 & -35 & -42 & -42 & -30 & 0  & 55  \\ \hline
\end{tabular}
\end{table}
Also, $f$ is decreasing on $[2,8]$ and is increasing on $[9,\infty)$.

We have that $g(x)=\frac{1}{6}x(x-1)(x-8)$ and:
\begin{table}[htb]
\centering
\begin{tabular}{|l|l|l|l|l|l|l|l|l|l|l|l|l|l|l|}
\hline
$x$   & 1 & 2 & 3  & 4  & 5   & 6   & 7   & 8   & 9   \\ \hline
$g(x)$ & 0 & -2 & -5 & -8 & -10 & -10 & -7 & 0 & 12   \\ \hline
\end{tabular}
\end{table}

Also, $g$ is decreasing on $[1,5]$ and is increasing on $[6,\infty)$.

We have that $h(x)=\frac{1}{2}x(x-5)$ and:
\begin{table}[htb]
\centering
\begin{tabular}{|l|l|l|l|l|l|l|l|l|l|l|l|l|l|l|}
\hline
$x$    & 1 & 2 & 3  & 4  & 5   & 6 & 7 & 8 \\ \hline
$h(x)$ & -2 & -3 & -3 & -2 & 0 & 3 & 7 & 12 \\ \hline
\end{tabular}
\end{table}

Also, $h$ is increasing on $[3,\infty)$.

We consider several cases:
\begin{enumerate}
\item[(a)] $\alpha_3=\binom{n_3}{3}$. From Lemma \ref{cord} it follows that $\alpha_4\leq \binom{n_3}{4}$ and $\alpha_2\geq \binom{n_3}{2}$.
If $n_3=n$, then $\alpha_2=\binom{n}{2}$ and therefore
$$\beta_4^5 \leq \binom{n}{4}-2\binom{n}{3}+3\binom{n}{2}-4n+5 = \binom{n-2}{4},$$
as required. Hence, we can assume that $n_3<n$. 

Since $\alpha_2\leq \binom{n}{2}$ and $\alpha_4\leq \binom{n_3}{4}$,
a sufficient condition to have $\beta_4^5 \leq \binom{n-2}{4}$ is $f(n_3)\leq f(n)$.
This is clearly satisfied for $n\geq 11$; see the table with values of $f(x)$. 
Thus, we may assume that $n\leq 10$. We consider the subcases:
\begin{enumerate}
\item[(i)] $n=10$. Since $f(10)=-30$, the inequality $f(n_3)\leq f(10)$ is satisfied for $n_3\in\{7,8,9\}$ and there is nothing to prove.
           Thus $n_3\leq 6$ and $\alpha_3\leq \binom{6}{3}=20$. From \eqref{papusoi} it follows that
					 $$\alpha_2 \geq 4n-10 = 30\text{ and }3\alpha_2 \leq \alpha_3+6n-10 \leq 70,$$
					 which is impossible. 
\item[(ii)] $n=9$. Since $f(8)=f(9)$ we can assume that $n_3\leq 7$ and, therefore, 
            $\alpha_3\leq \binom{7}{3}=35$ and $\alpha_4\leq \binom{7}{4}=35$.
            From \eqref{papusoi} it
            follows that $$\alpha_2 \geq 4n-10=26\text{ and }3\alpha_2 \leq \alpha_3+6n-10\leq 79.$$
						Thus $\alpha_2=26$. Also, from \eqref{papusoi} it follows that $\alpha_3\geq 6n-20=34$ and thus $\alpha_3=35$.
						We get						
						$$\beta_4^5 
						\leq 35 - 2\cdot 35 + 3\cdot 26 - 36 + 5 = 12 \leq \binom{9-2}{4}=35,$$
						as required.
\item[(iii)] $n=8$. We have $n_3\leq 7$ and thus $\alpha_3\leq \binom{7}{3}=35$ and $\alpha_4\leq \binom{7}{4}=35$. 
             From \eqref{papusoi} it follows
             that $$3\alpha_2 \leq \alpha_3 + 38 \leq 73\text{ and }\alpha_3\geq 28>\binom{6}{3}.$$
						 Therefore $\alpha_3=35$ and $\alpha_2\leq 24$. We get:
						 $$\beta_4^5 
						 \leq 35 - 2\cdot 35 + 3\cdot 24 - 32 + 5 = 10 \leq \binom{8-2}{4}=15,$$
						 as required.
\item[(iv)]  $n=7$. We have $n_3\leq 6$ and thus $\alpha_3\leq \binom{6}{3}=20$. From \eqref{papusoi} it follows
             that $$\alpha_2\geq 4\cdot 7-10=18\text{ and } 3\alpha_2 \leq \alpha_3 + 6\cdot 7 - 10 \leq 52,$$
						 which is impossible.
\end{enumerate} 
\item[(b)] $\alpha_3=\binom{n_3}{3}+\binom{n_2}{2}$ with $n>n_3>n_2\geq 2$.
           From Lemma \ref{cord} it follows that $\alpha_4\leq \binom{n_3}{4} + \binom{n_2}{3}$ and 
					 $\alpha_2\geq \binom{n_3}{2}+\binom{n_2}{1}$. Similar to the case (a), if $f(n_3)+g(n_2)\leq f(n)$,
					 then $\beta_4^5 \leq \binom{n-2}{4}$ and there is nothing to prove. 									
					 If $n\geq 11$ and $n_2\leq 8$, then, from the tables of values of $f(x)$ and $g(x)$ we have
					 $$f(n_3)+g(n_2)\leq f(n_3) \leq f(n),$$
					 and there is nothing to prove. If $n\geq 11$ and $n_2\geq 9$, then \small
					 $$f(n)-(f(n_3)+g(n_2))\geq f(n)-(f(n-1)+g(n-2)) =\frac{1}{2}(n^2-9n+14) >0, $$ \normalsize
					 and we are done. Hence, we may assume that $n\leq 10$. 
	         We consider the subcases:
\begin{enumerate}
\item[(i)] $n=10$. Similar to the case (a.i), if $n_3\in\{7,8,9\}$, then $f(n_3)+g(n_2)\leq f(10)$ and we are done.
           If $n_3\leq 6$, then $\alpha_3\leq \binom{6}{3}+\binom{5}{2}=30$. On the other hand, from \eqref{papusoi}
					 it follows that $\alpha_3\geq 6\cdot 10 - 20 = 40$, which gives a contradiction.
\item[(ii)] $n=9$. If $n_3=8$, then $f(n_3)+g(n_2)\leq f(n_3)=f(8)=f(9)$ and we are done. If $n_3=7$ and $n_2\in \{4,5,6\}$,
            then $f(n_3)+g(n_2)\leq f(7)+g(4)=-43<-42=f(9)$ and we are also done. If $n_3=7$ and $n_2\leq 3$, then
						$\alpha_3\leq \binom{7}{3}+\binom{3}{2}=38$, $\alpha_3\geq \binom{7}{3}+\binom{2}{2}=36$ 
						and $\alpha_4\leq \binom{7}{4}+\binom{3}{3}=36$. From \eqref{papusoi} it
            follows that $$3\alpha_2 \leq \alpha_3+6n-10\leq 82,\text{ hence }\alpha_2\leq 27.$$
						Therefore $$\beta_4^5 \leq 36 - 2\cdot 36 + 3 \cdot 27 - 36 + 5 = 14 \leq \binom{n-2}{4}=\binom{7}{4}=35,$$
						as required. Now, if $n_3\leq 6$, then, as in the case (i), we have $\alpha_3\leq 30$, which contradict the fact
						that $\alpha_3\geq 6\cdot 9 - 20 = 34$.
\item[(iii)] $n=8$. As in the case (ii), if $n_3=7$ and $n_2\in\{4,5,6\}$, then there is nothing to prove. Also,
             if $n_3=7$ and $n_2\leq 3$, then $\alpha_3\in \{36,38\}$ and $\alpha_4\leq 36$. From \eqref{papusoi} it follows that
						 $3\alpha_2\leq \alpha_3+6n-10\leq 76,\text{ hence }\alpha_2\leq 25.$
             Therefore
						 $$\beta_4^5 \leq 36 - 2\cdot 36 + 3\cdot 25 - 32 + 5 = 12 \leq \binom{8-2}{4}=15,$$
						 and we are done. Now, if $n_3\leq 6$, then $\alpha_3\leq \binom{6}{3}+\binom{5}{2}=30$ and thus 
						 $\alpha_4\leq \binom{6}{4}+\binom{5}{3}=25$. Also, from \eqref{papusoi} we have 
						 $$\alpha_3\geq 6\cdot 8 - 20=28\text{ and }3\alpha_2 \leq 30+38 = 68.$$
						 Hence $\alpha_2\leq 22$. It follows that
						 $$\beta_4^5 \leq 25 - 2 \cdot 28 + 3\cdot 22 - 32 + 5 = 8 \leq \binom{6}{4}=15,$$
						 and we are done.						
\item[(iv)]  $n=7$. If $n_3=6$ and $n_2=5$, then $f(n_3)+g(n_2)=-25-10=-35=f(n)$ and there is nothing to prove.
             Assume that $n_3\leq 6$ and $n_2\leq 4$. It follows that $\alpha_3\leq \binom{6}{3}+\binom{4}{2}=26$
						 and $\alpha_4\leq \binom{6}{4}+\binom{4}{3}=19$. From \eqref{papusoi} we get
						 $$\alpha_2\geq 4\cdot 7-10=18\text{ and } 3\alpha_2 \leq \alpha_3 + 6\cdot 7 - 10 \leq 58.$$
						 Hence $\alpha_2\in\{18,19\}$. Also, $\alpha_3\geq 6\cdot 7 - 20 = 22$ and thus 
						 $\alpha_3\geq \binom{6}{3}+\binom{3}{2}=23$. If $\alpha_3=26$, then $\alpha_4\leq 19$ and
						 $$\beta_4^5 \leq 19 - 2\cdot 26 + 3\cdot 19 - 28 + 5 = 1\leq \binom{7-2}{4}=5,$$
						 and we are done. If $\alpha_3=23$, then $\alpha_4\leq \binom{6}{4}+\binom{3}{3}=16$ and
						 $$\beta_4^5\leq 16 - 2\cdot 23 + 3\cdot 19 - 28 + 5 = 4\leq \binom{7-2}{4}=5,$$
						 and we are done, also.
\end{enumerate} 				
\item[(c)] $\alpha_3=\binom{n_3}{3}+\binom{n_2}{2}+\binom{n_1}{1}$ with $n>n_3>n_2>n_1\geq 1$.
           From Lemma \ref{cord} it follows that $\alpha_4\leq \binom{n_3}{4} + \binom{n_2}{3}+\binom{n_1}{2}$ and 
					 $\alpha_2\geq \binom{n_3}{2}+\binom{n_2}{1}$. Similar to the previous cases, if $f(n_3)+g(n_2)+h(n_1)\leq f(n)$,
					 then $\beta_4^5 \leq \binom{n-2}{4}$ and there is nothing to prove. If $n\geq 11$, then \small
					 $$f(n)-f(n_3)-g(n_2)-h(n_1)\geq f(n)-f(n-1)-g(n-2)-h(n-3) = n-5 \geq 6,$$ \normalsize
					 and we are done. Hence, we may assume that $n\leq 10$. We consider the subcases:
\begin{enumerate}
\item[(i)] $n=10$. If $n_3\in\{7,8,9\}$, then, looking at the values of $f(x),g(x)$ and $h(x)$, it is easy to check that
           $f(n_3)+g(n_2)+h(n_1)\leq f(10)$, and we are done. If $n_3\leq 6$, then $\alpha_3\leq \binom{6}{3}+\binom{5}{2}+\binom{4}{1}=34$. 
					 On the other hand, from \eqref{papusoi} it follows that $\alpha_3\geq 6\cdot 10 - 20 = 40$, which gives a contradiction.
\item[(ii)] $n=9$. If $n_3=8$, then $n_2\leq 7$ and $n_1\leq 6$. In particular, $g(n_2)\leq 0$. If $n_1\leq 5$, then $h(n_1)\leq 0$.
            If $n_1=6$, then $n_2=7$ and $g(n_2)+h(n_1)=-7+3=-4\leq 0$. It follows that
            $$f(n_3)+g(n_2)+h(n_1)\leq f(n_3)=f(8)=-42=f(9)=f(n),$$
						and we are done. If $n_3=7$ and $n_2\in \{4,5,6\}$, then $n_1\leq 5$, $h(n_1)\leq 0$ and, moreover, we have
						$$f(n_3)+g(n_2)+h(n_1)\leq f(n_3)+g(n_2)\leq -35-8=-43 < -42=f(n),$$
						are we are also done. If $n_3=7$ and $n_2\leq 3$, then $n_1\leq 2$ and therefore
						$$\binom{7}{3}+\binom{2}{2}+\binom{1}{1} =37 \leq \alpha_3\leq 40 = \binom{7}{3}+\binom{3}{2}+\binom{2}{1}.$$
						From \eqref{papusoi} it follows that 
						$$3\alpha_2 \leq \alpha_3+6n-10\leq 84,\text{ hence }\alpha_2\leq 28.$$
						Note that $\alpha_4\leq \binom{7}{4}+\binom{3}{3}+\binom{2}{2}=37$.
						Therefore $$\beta_4^5 \leq 37 - 2\cdot 37 + 3 \cdot 28 - 36 + 5 = 16 \leq \binom{n-2}{4}=\binom{7}{4}=35,$$
						as required. On the other hand, if $n_3\leq 6$, then $n_2\leq 5$ and $n_1\leq 4$ and thus
						$\alpha_3\leq \binom{6}{3}+\binom{5}{2}+\binom{4}{1}=34$. Also, from \eqref{papusoi} it follows that 
						$\alpha_3\geq 6n-20=34$. Thus $\alpha_3=34$ and $\alpha_4\leq \binom{6}{4}+\binom{5}{3}+\binom{4}{2}=31$.
						Also, from \eqref{papusoi} it follows that 
						$$3\alpha_2 \leq \alpha_3+6n-10\leq 78,\text{ hence }\alpha_2\leq 26.$$											
						Therefore $$\beta_4^5 \leq 31 - 2\cdot 34 + 3\cdot 26 - 36 + 5 = 10 \leq \binom{7}{4}.$$
						Hence, we are done.
\item[(iii)] $n=8$. As in the case (ii), if $n_3=7$ and $n_2\in\{4,5,6\}$, then there is nothing to prove. Also, using the argument from the case (ii), 
             if $n_2\leq 3$, then $37\leq \alpha_3\leq 40$ and $\alpha_4\leq 37$. From \eqref{papusoi} it follows that
						 $$3\alpha_2 \leq \alpha_3+6n-10 \leq 78,\text{ hence }\alpha_2\leq 26.$$						
						 Therefore $$\beta_4^5 \leq 37-2\cdot 37+3\cdot 26 - 32 + 5 = 14\leq 15=\binom{6}{4}=\binom{n-2}{4},$$
						 and we are done. Now, if $n_3\leq 6$, then $\alpha_3\leq 34$ and thus $\alpha_4\leq 31$.
						 Also, from \eqref{papusoi} it follows that $\alpha_3\geq 6n-20=28$ and 
						 $$3\alpha_2 \leq \alpha_3+6n-10\leq 72,\text{ hence }\alpha_2\leq 24.$$
						 If $\alpha_2\leq 22$, then 
						 $$\beta_4^5 \leq 31 - 2\cdot 28 + 3\cdot 22 - 32 + 5 = 14 \leq \binom{8-2}{4}=15,$$
						 and we are done. Similarly, if $\alpha_3\geq 31$, then 
						 $$\beta_4^5 \leq 31 - 2\cdot 31 + 3\cdot 24 - 32 + 5 = 14 \leq \binom{8-2}{4}=15,$$
						 and we are done. So we may assume $\alpha_3\in\{28,29,30\}$ and $\alpha_2\in\{23,24\}$.
						 
						 If $\alpha_3=28=\binom{6}{3}+\binom{4}{2}+\binom{2}{1}$, then, from Lemma \ref{cord}, it 
						 follows that $\alpha_4\leq \binom{6}{4}+\binom{4}{3}+\binom{2}{2}=20$ and thus
						 $$\beta_4^5 \leq 20 - 2\cdot 28 + 3\cdot 24 - 32 + 5 = 9 < 15,$$
						 as required. If $\alpha_3=29=\binom{6}{3}+\binom{4}{2}+\binom{3}{1}$, then, from Lemma \ref{cord}, it 
						 follows that $\alpha_4\leq \binom{6}{4}+\binom{4}{3}+\binom{3}{2}=22$ and thus
						 $$\beta_4^5 \leq 22 - 2\cdot 29 + 3\cdot 24 - 32 + 5 = 9 < 15,$$
						 as required.  If $\alpha_3=29=\binom{6}{3}+\binom{5}{2}$, then, from Lemma \ref{cord}, it 
						 follows that $\alpha_4\leq \binom{6}{4}+\binom{5}{3}=25$ and thus
						 $$\beta_4^5 \leq 25 - 2\cdot 30 + 3\cdot 24 - 32 + 5 = 10 < 15,$$
						 and we are done.
\item[(iv)] $n=7$. If $n_3=6$ and $n_2=5$, then $n_1\leq 4$ and therefore 
            $$f(n_3)+g(n_2)+h(n_1)\leq f(n_3)+g(n_2)=-25-10=-35=f(n),$$ and we are done.						
						Assume that $n_3\leq 6$ and $n_2\leq 4$. It follows that $\alpha_3\leq \binom{6}{3}+\binom{4}{2}+\binom{3}{1}=29$
						 and $\alpha_4\leq \binom{6}{4}+\binom{4}{3}+\binom{3}{2}=22$. From \eqref{papusoi} we get
						 $$\alpha_2\geq 4\cdot 7-10=18\text{ and } 3\alpha_2 \leq \alpha_3 + 6\cdot 7 - 10 \leq 61.$$
						 Hence $\alpha_2\in\{18,19,20\}$. Also, $\alpha_3\geq 6\cdot 7 - 20 = 22$. If $\alpha_2=18=\binom{6}{2}+\binom{3}{1}$,
						 then, from Lemma \ref{cord}, it follows that $\alpha_4\leq \binom{6}{4}+\binom{3}{3}=16$ and thus
						 $$\beta_4^5\leq 16 - 2\cdot 22 + 3 \cdot 18 - 28 + 5 = 3 < 5 =\binom{7-2}{4}.$$
						 If $\alpha_2\geq 19$, then $\alpha_3\geq 25$ and thus 
						 and thus 
						 $\alpha_3\in\{25,28,29\}$.					
						 If $\alpha_3=25$, then $\alpha_4\leq 17$ and thus $\alpha_4-2\alpha_3\leq -33$. Also
						 $3\alpha_2\leq 25+32 = 57$ and thus $\alpha_2\leq 19$. It follows that
						 $$\beta_4^5\leq - 33 + 3\cdot 19 - 28 + 5 = 1 \leq \binom{7-2}{4}=5,$$
						 and we are done. If $\alpha_3\geq 28$, then 
						 $$\beta_4^5\leq 22 - 2\cdot 28 + 3\cdot 20 - 28 + 5 = 3 \leq \binom{7-2}{4}=5,$$
						 and we are also done.						 
\end{enumerate}
\end{enumerate}
Hence, the proof is complete.
\end{proof}

\begin{lema}\label{b55}
With the above notations, we have that $\beta_5^5\leq \binom{n-1}{5}$.
\end{lema}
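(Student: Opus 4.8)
The plan is to follow the template of Lemma \ref{b45}, first reducing to a dangerous regime and then to finitely many small values of $n$. Throughout I write $\alpha_j=\alpha_j(S/I)$ and $\beta_k^s=\beta_k^s(S/I)$. As in the previous proofs I may assume $I\subset\me^2$ and $I$ not principal, so that $\alpha_0=1$, $\alpha_1=n$ and $n\geq q+2=7$, and from \eqref{betak} I get the explicit expression
\[
\beta_5^5=\alpha_5-\alpha_4+\alpha_3-\alpha_2+\alpha_1-1=\alpha_5-\alpha_4+\alpha_3-\alpha_2+n-1.
\]
By Lemma \ref{lem3} one has $\beta_5^5\leq\alpha_5$, so if $\alpha_5\leq\binom{n-1}{5}$ there is nothing to prove; hence I would assume $\alpha_5>\binom{n-1}{5}$. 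If moreover $\alpha_5=\binom{n}{5}$, then Corollary \ref{nun} forces $\alpha_j=\binom{n}{j}$ for all $0\leq j\leq 5$ and \eqref{magic} gives $\beta_5^5=\binom{n-1}{5}$, with equality. Thus the remaining case is $\binom{n-1}{5}<\alpha_5<\binom{n}{5}$. (It is worth noting, via Lemma \ref{lem1}, that the assertion is equivalent to $\beta_5^5(I)\geq 0$, which explains why the full complex is the extremal configuration.)

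For the main case I would parametrize by the Macaulay expansion $\alpha_4=\binom{n_4}{4}+\binom{n_3}{3}+\binom{n_2}{2}+\binom{n_1}{1}$, one degree higher than the expansion of $\alpha_3$ used in Lemma \ref{b45}. Lemma \ref{cord} then controls the genuinely dangerous positive term $\alpha_5$ from above, $\alpha_5\leq\binom{n_4}{5}+\binom{n_3}{4}+\binom{n_2}{3}+\binom{n_1}{2}$, so that, setting $F(x)=\binom{x}{5}-\binom{x}{4}$, $G(x)=\binom{x}{4}-\binom{x}{3}$, $H(x)=\binom{x}{3}-\binom{x}{2}$ and $L(x)=\binom{x}{2}-x$, I obtain $\alpha_5-\alpha_4\leq F(n_4)+G(n_3)+H(n_2)+L(n_1)$. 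For the lower pair $\alpha_3-\alpha_2$ I would invoke the depth inequalities $\beta_2^5=\alpha_2-4n+10\geq 0$ and $\beta_3^5=\alpha_3-3\alpha_2+6n-10\geq 0$, which hold because $\qdepth(S/I)=5$, together with the trivial bound $\alpha_2\leq\binom{n}{2}$. As in Lemma \ref{b45}, comparing with the full-complex value $\binom{n-1}{5}$ furnished by \eqref{magic}, the target reduces to a polynomial inequality of the shape $F(n_4)+G(n_3)+H(n_2)+L(n_1)\leq F(n)$ up to a nonnegative slack.

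The hard part will be that, unlike the two-term situations, $\beta_5^5$ carries four alternating signs, so no single Macaulay parametrization simultaneously puts $\alpha_5,\alpha_4,\alpha_3,\alpha_2$ on the favorable side of Kruskal--Katona: parametrizing by $\alpha_4$ controls $\alpha_5$ from above but bounds $\alpha_3$ only from below, precisely the wrong direction for the $+\alpha_3$ term, which is why the elementary $\beta$-inequalities must be brought in. Worse, the leading difference $F(x)=\binom{x}{5}-\binom{x}{4}$ is negative and \emph{decreasing} for $x\leq 9$ (already $F(6)=-9>F(7)=-14$), so the clean comparison $F(n_4)+\cdots\leq F(n)$ simply fails for small $n$. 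I therefore expect the real content to be finite casework: the constraints $\beta_2^5,\beta_3^5\geq 0$ together with $\binom{n-1}{5}<\alpha_5$ should pin $n$ to a bounded range, exactly as the bounds in Lemmas \ref{b3q} and \ref{b45} did, and for each such $n$ I would tabulate, via Lemma \ref{cord}, the maximal admissible $\alpha_5$ and $\alpha_4$ for each feasible pair $(\alpha_2,\alpha_3)$ and verify $\beta_5^5\leq\binom{n-1}{5}$ by direct substitution. Keeping the bound on $n$ tight enough that these tables stay short, and ensuring that every boundary Macaulay configuration is covered, is where I expect the bulk of the effort to lie.
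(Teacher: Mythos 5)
Your overall skeleton matches the paper's: reduce via $\beta_5^5\leq\alpha_5$ (equivalently $\beta_5^5=\alpha_5-\beta_4^4$ with $\beta_4^4\geq 0$) to the regime $\alpha_5>\binom{n-1}{5}$, then parametrize $\alpha_4$ by its Macaulay expansion, bound $\alpha_5$ above by Lemma \ref{cord}, and compare sums of the difference functions $f_k(x)=\binom{x}{k}-\binom{x}{k-1}$ against $f_5(n)$, with special handling of small $n$. But there is a genuine gap in your treatment of the middle pair $\alpha_3-\alpha_2$. To bound $\beta_5^5=(\alpha_5-\alpha_4)+(\alpha_3-\alpha_2)+n-1$ from above you need an \emph{upper} bound on $\alpha_3$ and a \emph{lower} bound on $\alpha_2$; of your three stated tools, $\beta_3^5\geq 0$ gives $\alpha_3\geq 3\alpha_2-6n+10$ (wrong direction), $\alpha_2\leq\binom{n}{2}$ is also the wrong direction, and $\beta_2^5\geq 0$ gives only $\alpha_2\geq 4n-10$, which is far too weak in the dangerous regime: the hypothesis $\alpha_5>\binom{n-1}{5}$ forces, via Lemma \ref{cord}, $\alpha_2\geq\binom{n-1}{2}+\binom{4}{2}$, so $\alpha_2$ is nearly maximal and your estimate $\alpha_3-\alpha_2\leq\binom{n}{3}-(4n-10)$ overshoots the true extremal value $\binom{n}{3}-\binom{n}{2}$ by $\binom{n}{2}-4n+10$. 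Concretely, at the full $4$-skeleton $\alpha_j=\binom{n}{j}$ one has $\beta_5^5=\binom{n-1}{5}$ exactly, and your budget already fails there. The paper closes precisely this hole with the claim \eqref{claie}: since $\alpha_2$ is either $\binom{n}{2}$ or $\binom{n-1}{2}+\binom{n_1}{1}$ with $n_1\geq 4$, Kruskal--Katona gives $\alpha_3\leq\binom{n-1}{3}+\binom{n_1}{2}$ and hence $\alpha_3-\alpha_2\leq\binom{n}{3}-\binom{n}{2}$, reducing the whole lemma to $\alpha_5-\alpha_4\leq\binom{n}{5}-\binom{n}{4}$. So the missing idea is a second application of Kruskal--Katona, to the expansion of $\alpha_2$, not more $\beta$-inequalities.

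A secondary mis-anticipation: you expect $\beta_2^5,\beta_3^5\geq 0$ together with $\alpha_5>\binom{n-1}{5}$ to pin $n$ to a bounded range, as in Lemmas \ref{b3q} and \ref{b45}. That mechanism does not operate here: in the present regime all $\alpha_j$ are near maximal, so those constraints are automatically slack for every $n$ and yield no bound. In the paper's proof the case analysis runs over all $n\geq 7$, with closed-form polynomial inequalities such as $f_5(n)-f_5(n-1)-f_4(n-2)-f_3(n-3)-f_2(n-4)=n-5>0$ disposing of all large $n$, and only $n=7$ (and partially $n=8$) handled by ad hoc numerics --- and even there the auxiliary constraints used are $\beta_4^4\geq 0$ and $\beta_3^5+\beta_4^5\geq 0$, not the ones you list. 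Your tabulation plan, as stated, would therefore face unboundedly many values of $n$ unless you add the monotonicity comparisons the paper uses.
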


\begin{proof}
Since $\beta_5^5=\alpha_5-\beta_4^4$ and $\beta_4^4\geq 0$, the conclusion follows immediately if $\alpha_5\leq \binom{n-1}{5}$.
Hence, we may assume that $$\alpha_5\geq \binom{n-1}{5}+1 = \binom{n-1}{5}+\binom{4}{4}.$$
From Lemma \ref{cord} it follows that $\alpha_j\geq \binom{n-1}{j}+\binom{4}{j}$ for all $j\in\{2,3,4\}$.
We claim that 
\begin{equation}\label{claie}
\alpha_3-\alpha_2 \leq \binom{n}{3} - \binom{n}{2} = \frac{1}{6}n(n-1)(n-5).
\end{equation}
If $\alpha_2=\binom{n}{2}$, then there is nothing to prove. 
Assume that $\alpha_2=\binom{n-1}{2}+\binom{n_1}{1}$, where $n-1>n_1\geq 4$. Then, from Lemma \ref{cord}, we have
$\alpha_3\leq \binom{n-1}{3}+\binom{n_1}{2}$. It follows that
\begin{align*}
& \alpha_3-\alpha_2\leq \binom{n-1}{3}+\binom{n_1}{2}-\binom{n-1}{2}-\binom{n_1}{1} =\\
& = \frac{1}{6}(n-1)(n-2)(n-6) + \frac{1}{2}n_1(n_1-3)\leq \\
& \leq \frac{1}{6}(n-1)(n-2)(n-6) + \frac{1}{2}(n-2)(n-5) = \\
& = \frac{1}{6}(n^3-6n^2-n+18) < \frac{1}{6}n(n-1)(n-5),
\end{align*}
and thus the claim \eqref{claie} holds. Hence, in order to complete the proof it is enough to show that
\begin{equation}\label{wish}
\alpha_5-\alpha_4\leq \binom{n}{5}-\binom{n}{4}=\frac{1}{120}n(n-1)(n-2)(n-3)(n-9).
\end{equation}
We denote $f_k(x)=\binom{x}{k}-\binom{x}{k-1}$ for all $2\leq k\leq 5$. We have the following table of values:

\begin{table}[tbh]
\centering
\label{Tab:Tcr}
\begin{tabular}{|l|l|l|l|l|l|l|l|l|l|}
\hline
$x$      & 1  & 2  & 3  & 4  & 5  & 6  & 7   & 8   & 9  \\ \hline
$f_5(x)$ & 0  & 0  & 0  & -1 & -4 & -9 & -14 & -14 & 0  \\ \hline
$f_4(x)$ & 0  & 0  & -1 & -3 & -5 & -5 & 0   & 14  & 42 \\ \hline
$f_3(x)$ & 0  & -1 & -2 & -2 & 0  & 5  & 14  & 28  & 48 \\ \hline
$f_2(x)$ & -1 & -1 & 0  & 2  & 5  & 9  & 14  & 20  & 27 \\ \hline
\end{tabular}
\end{table}

We consider several cases:
\begin{enumerate}
\item[(a)] $\alpha_4=\binom{n-1}{4}+\binom{n_3}{3}$, where $n-1>n_3\geq 4$. If $n=7$, then $n_3\in\{4,5\}$ and therefore 
           $\binom{n_3}{4}-\binom{n_3}{3}\in \{-3,-5\}$. Also, we have $\alpha_5\leq \binom{6}{5}+\binom{n_3}{4}$. If $n_3=5$, then
					 $$\alpha_5-\alpha_4 \leq \binom{6}{5}-\binom{6}{4}-5 = -14 = \binom{7}{5}-\binom{7}{4},$$
					 and the condition \eqref{wish} is fulfilled. Now, assume that $n_3=4$, that is $\alpha_4=\binom{6}{4}+\binom{4}{3}=19$. Since
					 $$\beta_4^4=\alpha_4-(\alpha_3-\alpha_2)-7+1 = 13-(\alpha_3-\alpha_2)\geq 0,$$
					 it follows that $\alpha_3-\alpha_2\leq 13$. Since $\alpha_5 \leq \binom{6}{5}+\binom{4}{4}=7$,								
					 we have $\alpha_5-\alpha_4\leq -12$. If $\alpha_3-\alpha_2\leq 12$, then												
					 $$\beta_5^5=(\alpha_5-\alpha_4)+(\alpha_3-\alpha_2)+7-1\leq 6 =\binom{7-1}{5},$$
					 as required. So, we may assume $\alpha_3-\alpha_2=13$. Note that, if $\alpha_2 =\binom{a}{2}+\binom{b}{1} \leq 20$
					 with $6\geq a >b\geq 1$, then, from Lemma \ref{cord}, we get 
					 $$\alpha_3-\alpha_2 \leq f_3(a)+f_2(b) \leq f_3(6)+f_2(5)=10,$$ a contradiction.
					 Hence $\alpha_2=21$ and $\alpha_3=34$. We get
					 $$\beta_4^5 = \alpha_4 - 2\alpha_3 + 3\alpha_2 - 4\cdot 7 + 5 = 19-2\cdot 34 +3\cdot 21 - 28 +5 = -9,$$
					 a contradiction with the fact that $\qdepth(S/I)=5$.
					
					 If $n=8$, then $n_3\in\{4,5,6\}$. It follows that
					 $$\alpha_5-\alpha_4=f_5(7)+f_4(n_3)\leq -14-3=-17 < -14=f_5(8),$$
					 and we are done. If $n\geq 9$, then
					 \begin{align*}
					 & f_5(n)-(\alpha_5-\alpha_4)=f(n)-(f_5(n-1)+f_4(n_3))\geq \\
					 & \geq f_5(n)-(f_5(n-1)+f_4(n-2)) = \frac{1}{6}(n-2)(n-3)(n-7)>0,
					 \end{align*}
					 and thus we are done.					
\item[(b)]	$\alpha_4=\binom{n-1}{4}+\binom{n_3}{3}+\binom{n_2}{2}$, where $n-1>n_3\geq 4$ and $n_3>n_2\geq 2$.
            Assume $n=7$. Since $\alpha_5-\alpha_4 \leq f_5(6) + f_4(n_3) + f_3(n_2)$, the only case in which
						$\alpha_5-\alpha_4$ could be larger than $f_5(7)=-14$ is $n_3=4$ and $n_2=2$. This means 
						$$\alpha_4=\binom{6}{4}+\binom{4}{3}+\binom{2}{2}=15+4+1=20.$$
						Since $\beta_3^5\geq 0$ and $\beta_4^5\geq 0$ it follows that
						$$0 \leq \beta_3^5 + \beta_4^5 = \alpha_4-\alpha_3+2n-5=29-\alpha_3,$$
						thus $\alpha_3\leq 29$. On the other hand, we have
						$$\alpha_5\leq \binom{6}{5}+\binom{4}{4}=7\text{ and }\alpha_2\geq \binom{6}{2}+\binom{4}{1}+\binom{2}{0}=20.$$
						It follows that 					  
						$$\beta_5^5 = \alpha_5-\alpha_4+\alpha_3-\alpha_2+6 \leq 7-20+29-20+6 = 2 \leq \binom{7-1}{5}=6,$$
						and we are done. If $n=8$, then $4\leq n_3\leq 6$ and $2\leq n_2\leq 5$. In particular, $f_4(n_3)\leq -3$ and $f_3(n_2)\leq 0$.
						Thus $$\alpha_5-\alpha_4 = f_5(7)+f_4(n_3)+f_3(n_2)\leq f_5(7)-3=-17<f_5(8)=-14,$$
						and we are done. Now, assume $n\geq 9$. Then
						\begin{align*}
						& f_5(n)-(\alpha_5-\alpha_4) = f_5(n)-(f_5(n-1)+f_4(n_3)+f_3(n_2))\geq \\		
						& \geq f_5(n)-(f_5(n-1)+f_4(n-2)+f_3(n-3)) = \frac{(n-3)(n-6)}{2} > 0.
						\end{align*}
						Hence, we get the required result.
\item[(c)]	$\alpha_4=\binom{n-1}{4}+\binom{n_3}{3}+\binom{n_2}{2}+\binom{n_1}{1}$, where $n-1>n_3\geq 4$ and $n_3>n_2>n_1\geq 1$.
            Similar to the previous cases, $\alpha_5\leq \binom{n-1}{5}+\binom{n_3}{4}+\binom{n_2}{3}+\binom{n_1}{2}$ and thus
						$$\alpha_5-\alpha_4\leq f_5(n-1)+f_4(n_3)+f_3(n_2)+f_2(n_1).$$
						If $n=7$, then $n_3\geq 4$, $n_2\geq 2$ and $n_1\geq 1$. From the table with the values of $f_k(x)$'s, it is easy to
						check that $$f_5(6)+f_4(n_3)+f_3(n_2)+f_2(n_1) \leq -9-3-1-1 = -14 = f_5(7).$$
						If $n\geq 8$, then 
						\begin{align*}
						& f_5(n)-(\alpha_5-\alpha_4) = f_5(n)-(f_5(n-1)+f_4(n_3)+f_3(n_2)+f_2(n_1))\geq \\
						& \geq f_5(n)-(f_5(n-1)+f_4(n-2)+f_3(n-3)+f_2(n-4)) = n-5 > 0,
						\end{align*}
						and we are done, again.
\end{enumerate}
Hence, the proof is complete.
\end{proof}

\begin{teor}\label{main}
Let $I\subset S$ be a squarefree monomial ideal with $\hdepth(S/I)=5$. 
Then $\hdepth(I)\geq 5$.
\end{teor}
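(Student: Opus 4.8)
The plan is to reduce the statement, via Lemma \ref{lem22}, to a short list of numerical inequalities that the preceding lemmas have already settled. Since $\qdepth(S/I)=5$, Lemma \ref{lem22} says that $\qdepth(I)\geq \qdepth(S/I)=5$ is equivalent to
$\beta_k^{5}(S/I)\leq \binom{n-6+k}{k}$ for every $1\leq k\leq 5$, so the whole theorem will follow once these five inequalities are verified.

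Before addressing them I would clear away the degenerate situations exactly as in the proofs of Theorem \ref{teo2} and Theorem \ref{t3}. If $I$ is principal then $\qdepth(I)=n$ by Theorem \ref{teo1}, so there is nothing to prove; and if $I\not\subset\me^2$, then Remark \ref{reducere} lets me strip off the linear generators and replace $I$ by an ideal $I'\subset\me'^2$ in fewer variables with the same $\alpha$-vector (since the discarded variables lie in $I$), hence with $\qdepth(S'/I')=5$, so that applying the $I\subset\me^2$ case to $I'$ and invoking Remark \ref{reducere} yields $\qdepth(I)\geq \qdepth(S/I)+1$. After this reduction I may assume $\alpha_0(S/I)=1$, $\alpha_1(S/I)=n$ and $n\geq q+2=7$, which is precisely the standing hypothesis under which Lemmas \ref{b3q}, \ref{b45} and \ref{b55} were proved.

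With these normalizations the five inequalities match the earlier results one by one: the cases $k=1,2$ are Proposition \ref{cook}; the case $k=3$ is Lemma \ref{b3q} with $q=5$, giving exactly $\beta_3^5(S/I)\leq \binom{n-3}{3}$; the case $k=4$ is Lemma \ref{b45}, giving $\beta_4^5(S/I)\leq\binom{n-2}{4}$; and the case $k=5$ is Lemma \ref{b55}, giving $\beta_5^5(S/I)\leq\binom{n-1}{5}$. Assembling these through Lemma \ref{lem22} completes the argument, so at the level of the theorem itself the work is pure bookkeeping.

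The genuine difficulty lies not in this assembly but in the three lemmas it invokes, and I expect the $k=5$ bound (Lemma \ref{b55}) to be the crux. Bounding $\beta_5^5(S/I)=\alpha_5-\alpha_4+\alpha_3-\alpha_2+\alpha_1-1$ forces one to control the two independent Kruskal--Katona gaps $\alpha_5-\alpha_4$ and $\alpha_3-\alpha_2$ at once, using the Macaulay expansion of $\alpha_4$ into three nested binomial terms together with a monotonicity analysis of the companion functions $f_k(x)=\binom{x}{k}-\binom{x}{k-1}$, and then an exhaustive check at the tight small values of $n$; the analogous care is needed for $\beta_4^5$ in Lemma \ref{b45}. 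The one point to keep in view throughout is that each of Lemmas \ref{b3q}--\ref{b55} quietly uses $\alpha_1=n$ (that is, $I\subset\me^2$) and $\beta_4^4\geq 0$ (from Corollary \ref{max}); this is exactly why the preliminary reduction to $I\subset\me^2$ is essential rather than cosmetic, since it is what licenses their application.
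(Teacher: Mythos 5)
Your proposal is correct and follows essentially the same route as the paper, whose proof of Theorem \ref{main} is exactly this assembly: Lemma \ref{lem22} reduces the claim to the bounds $\beta_k^5(S/I)\leq\binom{n-6+k}{k}$ for $1\leq k\leq 5$, settled by Proposition \ref{cook} ($k=1,2$), Lemma \ref{b3q} with $q=5$ ($k=3$), Lemma \ref{b45} ($k=4$) and Lemma \ref{b55} ($k=5$). Your explicit preliminary reduction (principal case via Theorem \ref{teo1}, and $I\subset\me^2$ via Remark \ref{reducere} and Lemma \ref{patrat}) is not written out in the paper's proof but is indeed tacitly assumed there, since the proofs of Lemmas \ref{b3q}--\ref{b55} use $\alpha_1(S/I)=n$; making it explicit is a correct and slightly more careful rendering of the same argument.
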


\begin{proof}
The conclusion follows from Lemma \ref{lem22}, Proposition \ref{cook}, Lemma \ref{b3q} (the case $q=5$), Lemma \ref{b45} and Lemma \ref{b55}.
\end{proof}

\begin{cor}\label{cmain}
Let $I\subset S=K[x_1,\ldots,x_n]$ be a squarefree monomial ideal, where $n\leq 7$. Then 
$\hdepth(I)\geq \hdepth(S/I)$.
\end{cor}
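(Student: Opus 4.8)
The plan is to reduce everything to a finite case analysis on the value of $q := \hdepth(S/I)$, since the hypothesis $n \leq 7$ together with Theorem \ref{t11} confines $q$ to a short list. Indeed, Theorem \ref{t11} gives $q \leq \dim(S/I) \leq n-1 \leq 6$, so it suffices to treat $q \in \{0,1,2,3,4,5,6\}$ one value at a time, in each case quoting the strongest result already established in the preceding sections. This mirrors the structure of Corollary \ref{cor3}, which handled the smaller range $n \leq 6$.

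First I would dispose of the small values. For $q \leq 3$, Theorem \ref{teo2} yields the stronger inequality $\hdepth(I) \geq q+1 > q$, so there is nothing further to check. For $q = 4$, Theorem \ref{t3} gives exactly $\hdepth(I) \geq 4 = q$. For $q = 5$, which is the genuinely hard case and the one absorbing the bulk of the work in this section, Theorem \ref{main} gives $\hdepth(I) \geq 5 = q$. Thus for every $q \leq 5$ the desired inequality $\hdepth(I) \geq \hdepth(S/I)$ holds, and in fact this part of the argument does not even use $n \leq 7$.

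The only remaining value is $q = 6$, and here the bound $q \leq n-1 \leq 6$ forces $n = 7$, so that $q = n-1$. At this point I would invoke Theorem \ref{teo1}: the equality $\hdepth(S/I) = n-1$ implies that $I$ is principal, and for a principal ideal the same theorem gives $\hdepth(I) = n = 7 \geq 6 = q$. This closes the last case and completes the proof.

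Since the combinatorial machinery --- Lemma \ref{b3q}, Lemma \ref{b45}, and Lemma \ref{b55}, assembled in Theorem \ref{main} --- has already absorbed all of the analytic difficulty, the corollary itself is essentially an assembly step rather than a new estimate. The only mildly delicate point to keep in mind is that the extremal value $q = 6$ lies outside the reach of the inequality-type arguments of Lemma \ref{lem22} and instead must be handled through the principal-ideal characterization of Theorem \ref{teo1}; once that observation is made, no further obstacle remains.
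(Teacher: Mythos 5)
Your proof is correct and follows essentially the same route as the paper: both arguments split on $q=\hdepth(S/I)$, invoke Theorem \ref{teo2} for $q\leq 3$, Theorem \ref{t3} for $q=4$, Theorem \ref{main} for $q=5$, and settle the extremal case $q=n-1=6$ via the principal-ideal characterization of Theorem \ref{teo1}. The only difference is cosmetic: the paper disposes of the principal case first and then concludes $q\leq 5$, whereas you enumerate the values of $q$ and treat $q=6$ last.
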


\begin{proof}
If $I$ is principal, which, by Theorem \ref{teo1}, is equivalent to $\qdepth(S/I)=n-1$, we
have, again, by Theorem \ref{teo1} that $\qdepth(I)=n$ and there is nothing to prove. Hence, we may assume that $q=\qdepth(S/I)\leq 5$.
If $q\leq 4$ we are done by Theorem \ref{teo2} and Theorem \ref{t3}. For $q=5$, the conclusion follows from Theorem \ref{main}.
\end{proof}

\end{document}